\documentclass[12pt]{article}
\usepackage[latin1]{inputenc}
\usepackage[english]{babel}
\newtheorem{theorem}{Theorem}
\newtheorem{lemma}{Lemma}
\newtheorem{definition}{Definition}
\newtheorem{proof}{Proof}
\usepackage{stmaryrd}
\usepackage{enumerate}
\usepackage[noend]{algpseudocode}
\usepackage{calrsfs}
\usepackage{graphicx}
\usepackage{amssymb}
\usepackage{amsmath}
\usepackage{epstopdf}
\usepackage{epsfig}
\usepackage{subcaption}
\usepackage{pstricks}
\usepackage{fancyheadings}
\usepackage{pgfplots}
\usepackage{physics}
\usepackage{stackengine}
\usepackage[margin=1.0in]{geometry}
\usepackage{footnote}

\usepackage[framemethod=tikz]{mdframed}
\usepackage{lipsum}
\usepackage{url}
\usepackage{xcolor}
\usepackage{framed,multirow}
%
\usepackage{latexsym}

\usepackage{lipsum}
\usepackage{amsfonts}
\usepackage{graphicx}
\usepackage{epstopdf}
\usepackage{mathtools}
\usepackage{amsmath}
\usepackage{bm}
\usepackage{float}
\usepackage{tikz}
\usepackage{stackengine}

\def\R{\mathbb{R}}

\def\Omegat{\widetilde{\Omega}}
\def\Pt{\widetilde{\bm{P} }}

\def\l{\left\langle}
\def\r{\right\rangle}
\def\1{\chi}

\newcommand{\pf}[2]{\frac{\partial #1 }{\partial #2}}

\DeclareMathAlphabet{\pazocal}{OMS}{zplm}{m}{n}

\newtheorem{remark}{Remark}

\newcommand{\lJump}{[\![}
\newcommand{\rJump}{]\!]}

\newcommand{\en}[1]{{\left\vert\kern-0.25ex\left\vert\kern-0.25ex\left\vert #1 
    \right\vert\kern-0.25ex\r6ight\vert\kern-0.25ex\right\vert}}

\makeatletter
\def\BState{\State\hskip-\ALG@thistlm}
\makeatother

\begin{document}
\title{Accurate simulations of nonlinear dynamic shear ruptures on pre-existing faults in 3D elastic solids  with dual-pairing SBP methods}
\author{Kenneth Duru\thanks{Mathematical Sciences Institute, The Australian National University, Australia.}, \and Christopher  Williams \thanks{Mathematical Sciences Institute, The Australian National University, Australia, Department of Statistics, University of Oxford}, \and  Frederick  Fung \thanks{Mathematical Sciences Institute, The Australian National University, National Computational Infrastructure, Australia} }
\pagenumbering{arabic}
\maketitle
\begin{abstract}
In this paper we derive and analyse efficient and stable numerical methods for accurate numerical simulations of nonlinear dynamic shear ruptures on non-planar faults embedded in 3D elastic solids using  dual-paring (DP) summation by parts (SBP) finite difference (FD) methods. Specifically, for nonlinear dynamic earthquake ruptures,  we demonstrate that the DP SBP FD operators [K. Mattsson. J. Comput. Phys., 335:283-310, 2017] generate spurious catastrophic high frequency wave modes that do not diminish with mesh refinement. Meanwhile our new dispersion relation preserving (DRP)  SBP FD operators [C. Williams and K Duru, https://arxiv.org/abs/2110.04957, 2021] have more accurate numerical dispersion relation properties and do not support  poisonous spurious high frequency wave modes. Numerical simulations are performed in 3D with geometrically complex fault surfaces verifying the efficacy of the method. Our method accurately reproduces community developed dynamic rupture benchmark problems, proposed by Southern California Earthquake Center, with less computational effort than standard methods based on traditional SBP  FD operators.
\end{abstract}
\textit{Keyword:}
Dual-pairing summation by parts methods, stability, earthquake rupture, nonlinear friction laws, dispersion relation preserving schemes, high performance computing

\section{Introduction}
Numerical simulations of nonlinear frictional sliding along interfaces embedded in elastic solids are of utmost importance in science and engineering \cite{kostrov1988principles,bowden2001friction,Scholz1998}. Frictional processes in both industrial materials such as steel pipes, plates and beams, and natural structures like faults in the Earth's subsurface can lead to failure or disaster. For instance, current understanding of crustal earthquakes is that they nucleate as frictional instabilities on pre-existing fault surfaces under slow tectonic loading \cite{Oden2010, Scholz1998, Madriaga1998,Svetlizky2014,dieterich1979modeling, rice1983stability, ruina1983slip}. Earthquakes occur when two sides of the fault, initially held in contact by a high level of static frictional resistance, slip suddenly when the high level frictional strength catastrophically decreases. Frictional fault-slip generates strong ground shaking which can be carried by seismic (elastic) waves to far field, far away from fault zones. Thus, spontaneously propagating shear ruptures along internal interfaces in elastic solids are a useful idealisation of earthquake source processes \cite{Scholz1998,shi2013rupture,dunham2011earthquake,andrews1985dynamic,ramos2019transition, dieterich1979modeling, rice1983stability, ruina1983slip, falk2001rice,Freund19792199,SCEC2009,Kozdon2013,SCEC2009,GEUBELLE19951791,SCEC2018,DuruandDunham2016}.  However, this problem is both numerically and computationally challenging, because of the unprecedented large gradients generated by nonlinear coupling of fields across the fault interface and the presence of discontinuous solutions on the fault. 


Efficient and accurate numerical methods are necessary for effective and reliable numerical simulations of  nonlinear dynamic shear ruptures in elastic solids. Different numerical methods have been developed  \cite{Kozdon2013,SCEC2009,GEUBELLE19951791,SCEC2018,DuruandDunham2016, dieterich1979modeling,DuruGabrielIgel2021}. These include standard numerical methods such as spectral method \cite{Kaneko2008}, finite element method \cite{Oglesby1998,Oglesby2000}, finite volume method \cite{Benjemaa2007}, finite difference (FD) method \cite{DuruandDunham2016,Kozdon2013,shi2013rupture}, and the discontinuous Galerkin finite element method \cite{Puente2009,DuruGabrielIgel2021,Pelties2012}. They all have different strengths and weaknesses.  However, FD methods on structured grids are attractive because they are  efficient.

In this study, we derive and analyse high order accurate and provably stable FD methods for nonlinear frictional motion at interfaces in elastic solids. We consider the elastic wave equation in first-order form, subject to nonlinear friction laws at interfaces. The corresponding initial boundary value problem (IBVP) supports  energy bounded solutions. For energy-stable IBVPs, a standard approach to achieve numerical stability is to derive a discrete energy estimate, by mimicking the procedure in the continuous setting. If such an estimate is possible, numerical stability is ensured.
The summation-by-parts (SBP) FD \cite{BStrand1994} with the simultaneous approximation term (SAT) \cite{CarpenterGottliebAbarbanel1994,Mattsson2003,NordstromCarpenter2001} technique for implementing boundary conditions enables the development of energy-stable numerical approximations. 

The SBP-SAT FD method, with traditional SBP operators, have been developed for linear elastic waves and nonlinear friction laws \cite{DuruandDunham2016,Kozdon2013}.
However, traditional SBP FD operators based on central finite difference stencils often suffer from spurious unresolved wave-modes in their numerical solutions. These spurious wave-modes have the potential to destroy the accuracy of numerical solutions. To improve the accuracy of SBP operators, the dual-pairing SBP (DP SBP)  non-central  (upwind)  FD  stencils were recently introduce in  \cite{Mattsson2017,DovgilovichSofronov2015}. In \cite{DURU2022110966} these operators were shown to improve the accuracy of the numerical solutions of the 3D elastic wave equations in complex geometries.  The main benefit for the DP SBP operators \cite{Mattsson2017,DURU2022110966} is that they have the potential to suppress poisonous spurious oscillations from unresolved wave-modes, which can destroy the accuracy of numerical simulations on marginally resolved meshes. However, for dynamic shear ruptures composed of discontinuous and/or (nearly) singular exact solutions, numerical errors arising from high order accurate methods may not diminish with increasing mesh resolution, because of "Gibbs/Runge's phenomena". For these situations carefully designed numerical methods are necessary in order to compute accurate numerical solutions. Consequently, detailed theoretical analysis is needed in order to  understand the behaviours of numerical errors and control them.

In this paper, our first objective is to derive a provably stable numerical approximation of the elastic wave equation subject to nonlinear friction laws using the DP SBP framework. As in \cite{DuruandDunham2016,Kozdon2013}, first we construct a subspace of solutions satisfying the (discrete) linear elastodynamics equations and the interface condition exactly. We then project the solutions on the interface to this subspace, extracting data. A main result in this paper is the proof (see Theorem \ref{thm:uniq_and_existence}) of existence and uniqueness of the interface data for nonlinear friction laws in general anisotropic elastic media. We discretise the 3D elastic wave equation with the DP SBP operators, on boundary-conforming curvilinear meshes, and enforce the nonlinear interface conditions weakly using penalties.
 Using the energy method and the dual-pairing SBP framework \cite{Mattsson2017,DURU2022110966,CWilliams2021,williams2021provably}, we prove that the numerical method is energy-stable. The proof of energy-stability, however, bounds the growth of the numerical solutions in time but it does not guarantee the convergence of the solution, in particular in the presence of nearly singular and discontinuous exact solutions.

 Our second objective is to perform error analysis. Numerical error analysis for the discrete approximation of the 3D elastic wave equation subject to the nonlinear friction laws  at an internal fault/interface, is a nontrivial task. 
 To succeed we will make some simplifying assumptions.
 However, our analysis are plausible and are able to  explain some of the behavior of the error seen in the numerical simulations performed later in this paper.
 First, we linearise the friction law using a frozen coefficient argument.
 We derive a priori error estimates and prove the convergence of the numerical error for a linearised friction law.
 Second, we analyse the dependence of the parity (even/odd) of the operators on the numerical errors. For odd order accurate operators, our error analysis reveals that the DP SBP FD operators \cite{Mattsson2017} have the potential to generate spurious catastrophic high frequency errors that do not diminish with mesh refinement. Numerical simulations in 3D corroborates the theoretical analysis.

Our third objective is to eliminate the high frequency catastrophic error introduced by the DP SBP FD operators \cite{Mattsson2017}. Using the DP framework, it is possible to design optimised finite difference stencils such that numerical dispersion errors are significantly diminished for all wave numbers. This was the approach taken by the authors in \cite{williams2021provably} where $\alpha$-DRP SBP operators were derived. These DRP SBP operators are designed to guarantee maximum relative error $\le \alpha$ for all wave numbers. In particular  the $\alpha$-DRP SBP operators can resolve the highest frequency ($\pi$-mode) present on any equidistant grid at a tolerance of $\alpha = 5\%$ maximum error. For explicit schemes, these operators may provide a better resolution than any volume discretisation available today, including spectral methods, and significantly improves on the current standard for traditional operators that have a tolerance of $100 \%$ maximum error.  The DRP  SBP operators \cite{williams2021provably,CWilliams2021}  potentially eliminates the catastrophic high frequency errors. Numerical simulations  performed in 3D corroborates  the analysis.
Our DRP  SBP operators reproduces dynamic rupture benchmark problems, proposed by Southern California Earthquake Center, with less computational effort than standard methods based on traditional SBP  operators.

Efficient and scalable parallel implementation is necessary  for effective large scale numerical simulations. The DRP operators are implemented in the large scale code WaveQLab3D \cite{DURU2022110966,DuruandDunham2016} with near perfect scaling.  The resources used is slightly larger than the $6$th order upwind DP and traditional  SBP schemes, but not by a large amount.  We consider the DRP operators to have a marginal higher cost than the traditional and DP SBP methods. 

The remaining parts  of the paper will proceed as follows. 
In the next section, we present model equations for linear elastic solids in complex geometries and the nonlinear friction laws coupling the elastic solids at the interface. We also derive energy estimates which a stable numerical method should emulate. In Section 3 we introduce the frictional gluing (hat-) variables and prove the existence and uniqueness of the hat-variables for nonlinear friction laws in  general anisotropic elastic solids. In Section 4, we approximate the elastic wave equation in space using the DP SBP operators, numerically enforce the friction laws using penalties and prove numerical stability. In Section 5 we perform numerical error analysis. Numerical simulations are presented in Section 6, corroborating the theoretical analysis. In Section 7 we summarise the results.

\section{Physical model}
    \begin{figure}[h!]
    \centering
    \begin{tikzpicture}

\node at (6,-2.5) {$\uparrow{ \Phi^{-1}   } $};
\node at (8,-2.5) {$\downarrow{\ \Phi \ }$};

\begin{scope}
\node at (1,4.35) {$\Omega^{-}$};
\begin{scope}[every node/.append style={yslant=-0.5},yslant=-0.5]

    \coordinate (A1) at (0,0);
    \coordinate (A2) at (1,0);
    \coordinate (A3) at (2,0);
    \coordinate (A4) at (3,0);
    
    \coordinate (B1) at (0,1);
    \coordinate (B2) at (1,1);
    \coordinate (B3) at (2,1);
    \coordinate (B4) at (3,1);
    
    \coordinate (C1) at (0,2);
    \coordinate (C2) at (1,2);
    \coordinate (C3) at (2,2);
    \coordinate (C4) at (3,2);
    
    \coordinate (D1) at (0,3);
    \coordinate (D2) at (1,3);
    \coordinate (D3) at (2,3);
    \coordinate (D4) at (3,3);

  \node at (0.5,2.5) {};
  \node at (1.5,2.5) {};
  \node at (2.5,2.5) {};
  \node at (0.5,1.5) {};
  \node at (1.5,1.5) {$F_{r,0}^{-}$};
  \node at (2.5,1.5) {};
  \node at (0.5,0.5) {};
  \node at (1.5,0.5) {};
  \node at (2.5,0.5) {};
  \draw[color=black] (A1) to [bend left=40] (A2) to [bend right=50] (A3) to [bend left=20] (A4) ; 
  \draw[dashed, gray] (B1) to [bend left=40] (B2) to [bend right=50] (B3) to [bend left=20] (B4) ; 
  \draw[dashed, gray] (C1) to [bend left=40] (C2) to [bend right=50] (C3) to [bend left=20] (C4) ; 
  \draw[color=black] (D1) to [bend left=40] (D2) to [bend right=50] (D3) to [bend left=20] (D4) ; 
  
  \draw[color=black] (A1) to [bend left=40] (B1) to [bend right=50] (C1) to [bend left=20] (D1) ; 
  \draw[dashed, gray] (A2) to [bend left=40] (B2) to [bend right=50] (C2) to [bend left=20] (D2) ; 
  \draw[dashed, gray] (A3) to [bend left=40] (B3) to [bend right=50] (C3) to [bend left=20] (D3) ; 
\end{scope}
\begin{scope}[every node/.append style={yslant=0.5},yslant=0.5, shift = {(3,-3)}]
    \coordinate (A-1) at (0,0) + (-3,3);
    \coordinate (A-2) at (1,0);
    \coordinate (A-3) at (2,0);
    \coordinate (A-4) at (3,0);
    
    \coordinate (A1) at (0,0) + (-3,3);
    \coordinate (A2) at (1,0);
    \coordinate (A3) at (2,0);
    \coordinate (A4) at (3,0);
    
    \coordinate (B1) at (0,1);
    \coordinate (B2) at (1,1);
    \coordinate (B3) at (2,1);
    \coordinate (B4) at (3,1);
    
    \coordinate (C1) at (0,2);
    \coordinate (C2) at (1,2);
    \coordinate (C3) at (2,2);
    \coordinate (C4) at (3,2);
    
    \coordinate (D-1) at (0,3);
    \coordinate (D-2) at (1,3);
    \coordinate (D-3) at (2,3);
    \coordinate (D-4) at (3,3);
    
    \coordinate (D1) at (0,3);
    \coordinate (D2) at (1,3);
    \coordinate (D3) at (2,3);
    \coordinate (D4) at (3,3);

    \draw[fill=gray!30, gray!30] (A1) to [bend left=40] (A2) to [bend right=50] (A3) to [bend left=20] (A4) to [bend left=40] (B4) to [bend right=50] (C4) to [bend left=20] (D4);
  \draw[fill=gray!30, gray!30] (A1) to [bend left=40] (B1) to [bend right=50] (C1) to [bend left=20] (D1) to (D1) to [bend left=40] (D2) to [bend right=50] (D3) to [bend left=20] (D4) ;

  \node at (0.5,2.5) {};
  \node at (1.5,2.5) {};
  \node at (2.5,2.5) {};
  \node at (0.5,1.5) {};
  \node at (1.5,1.5) {${\Gamma}_F$};
  \node at (2.5,1.5) {};
  \node at (0.5,0.5) {};
  \node at (1.5,0.5) {};
  \node at (2.5,0.5) {};
  
  \draw[color=black] (A1) to [bend left=40] (A2) to [bend right=50] (A3) to [bend left=20] (A4) ; 
  \draw[dashed, gray] (B1) to [bend left=40] (B2) to [bend right=50] (B3) to [bend left=20] (B4) ; 
  \draw[dashed, gray] (C1) to [bend left=40] (C2) to [bend right=50] (C3) to [bend left=20] (C4) ; 
  \draw[color=black] (D1) to [bend left=40] (D2) to [bend right=50] (D3) to [bend left=20] (D4) ; 
  
  \draw[color=black] (A1) to [bend left=40] (B1) to [bend right=50] (C1) to [bend left=20] (D1) ; 
  \draw[dashed, gray] (A2) to [bend left=40] (B2) to [bend right=50] (C2) to [bend left=20] (D2) ; 
  \draw[dashed, gray] (A3) to [bend left=40] (B3) to [bend right=50] (C3) to [bend left=20] (D3) ; 
  \draw[color=black] (A4) to [bend left=40] (B4) to [bend right=50] (C4) to [bend left=20] (D4) ;

\end{scope}
\begin{scope}[every node/.append style={
    yslant=0.5,xslant=-1},yslant=0.5,xslant=-1,
    shift = {(3,0)}
  ]
    \coordinate (A1) at (0,0);
    \coordinate (A2) at (1,0);
    \coordinate (A3) at (2,0);
    \coordinate (A4) at (3,0);
    
    \coordinate (B1) at (0,1);
    \coordinate (B2) at (1,1);
    \coordinate (B3) at (2,1);
    \coordinate (B4) at (3,1);
    
    \coordinate (C1) at (0,2);
    \coordinate (C2) at (1,2);
    \coordinate (C3) at (2,2);
    \coordinate (C4) at (3,2);
    
    \coordinate (D1) at (0,3);
    \coordinate (D2) at (1,3);
    \coordinate (D3) at (2,3);
    \coordinate (D4) at (3,3);

  \node at (0.5,2.5) {};
  \node at (1.5,2.5) {};
  \node at (2.5,2.5) {};
  \node at (0.5,1.5) {};
  \node at (1.5,1.5) {$F_{s,0}^{-}$};
  \node at (2.5,1.5) {};
  \node at (0.5,0.5) {};
  \node at (1.5,0.5) {};
  \node at (2.5,0.5) {};
  
  \draw[dashed, gray] (B1) to [bend left=40] (B2) to [bend right=50] (B3) to [bend left=20] (B4) ; 
  \draw[dashed, gray] (C1) to [bend left=40] (C2) to [bend right=50] (C3) to [bend left=20] (C4) ; 
  \draw[color=black] (D1) to [bend left=40] (D2) to [bend right=50] (D3) to [bend left=20] (D4) ; 
  
  \draw[dashed, gray] (A2) to [bend left=40] (B2) to [bend right=50] (C2) to [bend left=20] (D2) ; 
  \draw[dashed, gray] (A3) to [bend left=40] (B3) to [bend right=50] (C3) to [bend left=20] (D3) ; 
  \draw[color=black] (A4) to [bend left=40] (B4) to [bend right=50] (C4) to [bend left=20] (D4) ; 
\end{scope}
\end{scope}

\begin{scope}[shift = {(8,0)}]
\node at (5,4.35) {$\Omega^{+}$};
\begin{scope}[every node/.append style={yslant=-0.5},yslant=-0.5]

    \coordinate (A+1) at (0,0);
    \coordinate (A+2) at (1,0);
    \coordinate (A+3) at (2,0);
    \coordinate (A+4) at (3,0);
    
    \coordinate (A1) at (0,0);
    \coordinate (A2) at (1,0);
    \coordinate (A3) at (2,0);
    \coordinate (A4) at (3,0);
    
    \coordinate (B1) at (0,1);
    \coordinate (B2) at (1,1);
    \coordinate (B3) at (2,1);
    \coordinate (B4) at (3,1);
    
    \coordinate (C1) at (0,2);
    \coordinate (C2) at (1,2);
    \coordinate (C3) at (2,2);
    \coordinate (C4) at (3,2);
    
    \coordinate (D+1) at (0,3);
    \coordinate (D+2) at (1,3);
    \coordinate (D+3) at (2,3);
    \coordinate (D+4) at (3,3);
    
    \coordinate (D1) at (0,3);
    \coordinate (D2) at (1,3);
    \coordinate (D3) at (2,3);
    \coordinate (D4) at (3,3);

     \draw[fill=gray!30, gray!30] (A1) to [bend left=40] (A2) to [bend right=50] (A3) to [bend left=20] (A4) to [bend left=40] (B4) to [bend right=50] (C4) to [bend left=20] (D4);
  \draw[fill=gray!30, gray!30] (A1) to [bend left=40] (B1) to [bend right=50] (C1) to [bend left=20] (D1) to (D1) to [bend left=40] (D2) to [bend right=50] (D3) to [bend left=20] (D4) ;

  \node at (0.5,2.5) {};
  \node at (1.5,2.5) {};
  \node at (2.5,2.5) {};
  \node at (0.5,1.5) {};
  \node at (1.5,1.5) {${\Gamma}_F$};
  \node at (2.5,1.5) {};
  \node at (0.5,0.5) {};
  \node at (1.5,0.5) {};
  \node at (2.5,0.5) {};

  \draw[color=black] (A1) to [bend left=40] (A2) to [bend right=50] (A3) to [bend left=20] (A4) ; 
  \draw[dashed, gray] (B1) to [bend left=40] (B2) to [bend right=50] (B3) to [bend left=20] (B4) ; 
  \draw[dashed, gray] (C1) to [bend left=40] (C2) to [bend right=50] (C3) to [bend left=20] (C4) ; 
  \draw[color=black] (D1) to [bend left=40] (D2) to [bend right=50] (D3) to [bend left=20] (D4) ; 
  
  \draw[color=black] (A1) to [bend left=40] (B1) to [bend right=50] (C1) to [bend left=20] (D1) ; 
  \draw[dashed, gray] (A2) to [bend left=40] (B2) to [bend right=50] (C2) to [bend left=20] (D2) ; 
  \draw[dashed, gray] (A3) to [bend left=40] (B3) to [bend right=50] (C3) to [bend left=20] (D3) ; 
  \draw[color=black] (A4) to [bend left=40] (B4) to [bend right=50] (C4) to [bend left=20] (D4) ; 
\end{scope}
\begin{scope}[every node/.append style={yslant=0.5},yslant=0.5, shift = {(3,-3)}]
    \coordinate (A1) at (0,0) + (-3,3);
    \coordinate (A2) at (1,0);
    \coordinate (A3) at (2,0);
    \coordinate (A4) at (3,0);
    
    \coordinate (B1) at (0,1);
    \coordinate (B2) at (1,1);
    \coordinate (B3) at (2,1);
    \coordinate (B4) at (3,1);
    
    \coordinate (C1) at (0,2);
    \coordinate (C2) at (1,2);
    \coordinate (C3) at (2,2);
    \coordinate (C4) at (3,2);
    
    \coordinate (D1) at (0,3);
    \coordinate (D2) at (1,3);
    \coordinate (D3) at (2,3);
    \coordinate (D4) at (3,3);

  \node at (0.5,2.5) {};
  \node at (1.5,2.5) {};
  \node at (2.5,2.5) {};
  \node at (0.5,1.5) {};
  \node at (1.5,1.5) {$F_{r,0}^{+}$};
  \node at (2.5,1.5) {};
  \node at (0.5,0.5) {};
  \node at (1.5,0.5) {};
  \node at (2.5,0.5) {};
  
  \draw[color=black] (A1) to [bend left=40] (A2) to [bend right=50] (A3) to [bend left=20] (A4) ; 
  \draw[dashed, gray] (B1) to [bend left=40] (B2) to [bend right=50] (B3) to [bend left=20] (B4) ; 
  \draw[dashed, gray] (C1) to [bend left=40] (C2) to [bend right=50] (C3) to [bend left=20] (C4) ; 
  \draw[color=black] (D1) to [bend left=40] (D2) to [bend right=50] (D3) to [bend left=20] (D4) ; 
  
  \draw[dashed, gray] (A2) to [bend left=40] (B2) to [bend right=50] (C2) to [bend left=20] (D2) ; 
  \draw[dashed, gray] (A3) to [bend left=40] (B3) to [bend right=50] (C3) to [bend left=20] (D3) ; 
  \draw[color=black] (A4) to [bend left=40] (B4) to [bend right=50] (C4) to [bend left=20] (D4) ; 
\end{scope}
\begin{scope}[every node/.append style={
    yslant=0.5,xslant=-1},yslant=0.5,xslant=-1,
    shift = {(3,0)}
  ]
    \coordinate (A1) at (0,0);
    \coordinate (A2) at (1,0);
    \coordinate (A3) at (2,0);
    \coordinate (A4) at (3,0);
    
    \coordinate (B1) at (0,1);
    \coordinate (B2) at (1,1);
    \coordinate (B3) at (2,1);
    \coordinate (B4) at (3,1);
    
    \coordinate (C1) at (0,2);
    \coordinate (C2) at (1,2);
    \coordinate (C3) at (2,2);
    \coordinate (C4) at (3,2);
    
    \coordinate (D1) at (0,3);
    \coordinate (D2) at (1,3);
    \coordinate (D3) at (2,3);
    \coordinate (D4) at (3,3);

  \node at (0.5,2.5) {};
  \node at (1.5,2.5) {};
  \node at (2.5,2.5) {};
  \node at (0.5,1.5) {};
  \node at (1.5,1.5) {$F_{s,0}^{+}$};
  \node at (2.5,1.5) {};
  \node at (0.5,0.5) {};
  \node at (1.5,0.5) {};
  \node at (2.5,0.5) {};
  
  \draw[dashed, gray] (B1) to [bend left=40] (B2) to [bend right=50] (B3) to [bend left=20] (B4) ; 
  \draw[dashed, gray] (C1) to [bend left=40] (C2) to [bend right=50] (C3) to [bend left=20] (C4) ; 
  \draw[color=black] (D1) to [bend left=40] (D2) to [bend right=50] (D3) to [bend left=20] (D4) ; 
  
  \draw[dashed, gray] (A2) to [bend left=40] (B2) to [bend right=50] (C2) to [bend left=20] (D2) ; 
  \draw[dashed, gray] (A3) to [bend left=40] (B3) to [bend right=50] (C3) to [bend left=20] (D3) ; 
  \draw[color=black] (A4) to [bend left=40] (B4) to [bend right=50] (C4) to [bend left=20] (D4) ; 
\end{scope}
\end{scope}

\draw[dotted, gray] (A-1) -- (A+4);
\draw[dotted, gray] (A-2) -- (A+3);
\draw[dotted, gray] (A-3) -- (A+2);
\draw[dotted, gray] (A-4) -- (A+1);
\draw[dotted, gray] (D-1) -- (D+4);
\draw[dotted, gray] (D-2) -- (D+3);
\draw[dotted, gray] (D-3) -- (D+2);
\draw[dotted, gray] (D-4) -- (D+1);

\begin{scope}[shift = {(0,-7)}]
    
\node at (1,4.35) {$\Omegat^{-}$};
\begin{scope}[every node/.append style={yslant=-0.5},yslant=-0.5]
  \node at (1.5,2.5) {};
  \node at (2.5,2.5) {};
  \node at (0.5,1.5) {};
  \node at (1.5,1.5) {$\widetilde{F}_{r,0}^{-}$};
  \node at (2.5,1.5) {};
  \node at (0.5,0.5) {};
  \node at (1.5,0.5) {};
  \node at (2.5,0.5) {};
  \draw[dashed, gray] (0,0) grid (3,3);
  \draw (0,0) rectangle (3,3);
\end{scope}
\begin{scope}[every node/.append style={yslant=0.5},yslant=0.5]
  \coordinate (At-) at (6,0) {};
  \coordinate (Bt-) at (5,0) {};
  \coordinate (Ct-) at (4,0) {};
  \coordinate (Dt-) at (3,0) {};
  \coordinate (Et-) at (6,-3) {};
  \coordinate (Ft-) at (5,-3) {};
  \coordinate (Gt-) at (4,-3) {};
  \coordinate (Ht-) at (3,-3) {};
 
  \draw[fill=gray!30, gray!30] (3,-3) rectangle (6,0);
  
  \node at (4.5,-0.5) {};
  \node at (5.5,-0.5) {};
  \node at (3.5,-1.5) {};
  \node at (4.5,-1.5) {$\widetilde{\Gamma}_F$};
  \node at (5.5,-1.5) {};
  \node at (3.5,-2.5) {};
  \node at (4.5,-2.5) {};
  \node at (5.5,-2.5) {};
  \draw[dashed, gray] (3,-3) grid (6,0);
  \draw (3,-3) rectangle (6,0);
\end{scope}
\begin{scope}[every node/.append style={
    yslant=0.5,xslant=-1},yslant=0.5,xslant=-1
  ]
  \node at (3.5,2.5) {};
  \node at (3.5,1.5) {};
  \node at (3.5,0.5) {};
  \node at (4.5,2.5) {};
  \node at (4.5,1.5) {$\widetilde{F}_{s,0}^{-}$};
  \node at (4.5,0.5) {};
  \node at (5.5,2.5) {};
  \node at (5.5,1.5) {};
  \node at (5.5,0.5) {};
  \draw[dashed, gray] (3,0) grid (6,3);
  \draw (3,0) rectangle (6,3);
\end{scope}

\begin{scope}[shift = {(8,0)}]
\node at (5,4.35) {$\Omegat^{+}$};
\begin{scope}[every node/.append style={yslant=-0.5},yslant=-0.5]
  \coordinate (At+) at (0,3) {};
  \coordinate (Bt+) at (1,3) {};
  \coordinate (Ct+) at (2,3) {};
  \coordinate (Dt+) at (3,3) {};
  \coordinate (Et+) at (0,0) {};
  \coordinate (Ft+) at (1,0) {};
  \coordinate (Gt+) at (2,0) {};
  \coordinate (Ht+) at (3,0) {};
  
  \draw[fill=gray!30, gray!30] (0,0) rectangle (3,3);
  
  \node at (1.5,2.5) {};
  \node at (2.5,2.5) {};
  \node at (0.5,1.5) {};
  \node at (1.5,1.5) {$\widetilde{\Gamma}_F$};
  \node at (2.5,1.5) {};
  \node at (0.5,0.5) {};
  \node at (1.5,0.5) {};
  \node at (2.5,0.5) {};

  \draw[dashed, gray] (0,0) grid (3,3);
  \draw (0,0) rectangle (3,3);
\end{scope}
\begin{scope}[every node/.append style={yslant=0.5},yslant=0.5]
  \node at (3.5,-0.5) {};
  \node at (4.5,-0.5) {};
  \node at (5.5,-0.5) {};
  \node at (3.5,-1.5) {};
  \node at (4.5,-1.5) {$\widetilde{F}_{r,0}^{+}$};
  \node at (5.5,-1.5) {};
  \node at (3.5,-2.5) {};
  \node at (4.5,-2.5) {};
  \node at (5.5,-2.5) {};
  \draw[dashed, gray] (3,-3) grid (6,0);
  \draw (3,-3) rectangle (6,0);
\end{scope}
\begin{scope}[every node/.append style={
    yslant=0.5,xslant=-1},yslant=0.5,xslant=-1
  ]
  \node at (3.5,2.5) {};
  \node at (3.5,1.5) {};
  \node at (3.5,0.5) {};
  \node at (4.5,2.5) {};
  \node at (4.5,1.5) {$\widetilde{F}_{s,0}^{+}$};
  \node at (4.5,0.5) {};
  \node at (5.5,2.5) {};
  \node at (5.5,1.5) {};
  \node at (5.5,0.5) {};
  \draw[dashed, gray] (3,0) grid (6,3);
  \draw (3,0) rectangle (6,3);
\end{scope}
\end{scope}

\draw[dotted, gray] (At-) -- (At+);
\draw[dotted, gray] (Bt-) -- (Bt+);
\draw[dotted, gray] (Ct-) -- (Ct+);
\draw[dotted, gray] (Dt-) -- (Dt+);
\draw[dotted, gray] (Et-) -- (Et+);
\draw[dotted, gray] (Ft-) -- (Ft+);
\draw[dotted, gray] (Gt-) -- (Gt+);
\draw[dotted, gray] (Ht-) -- (Ht+);

\end{scope}

\end{tikzpicture}
    \caption{Curvilinear coordinate transform between the computational space $\Omegat$ and modelling space $\Omega$. The shaded face $\Gamma_F$ is a frictional interface connecting the two blocks, $\Omega^{-}$ and $\Omega^{+}$, of elastic solid.}
    \label{fig:cur-transform}
\end{figure}
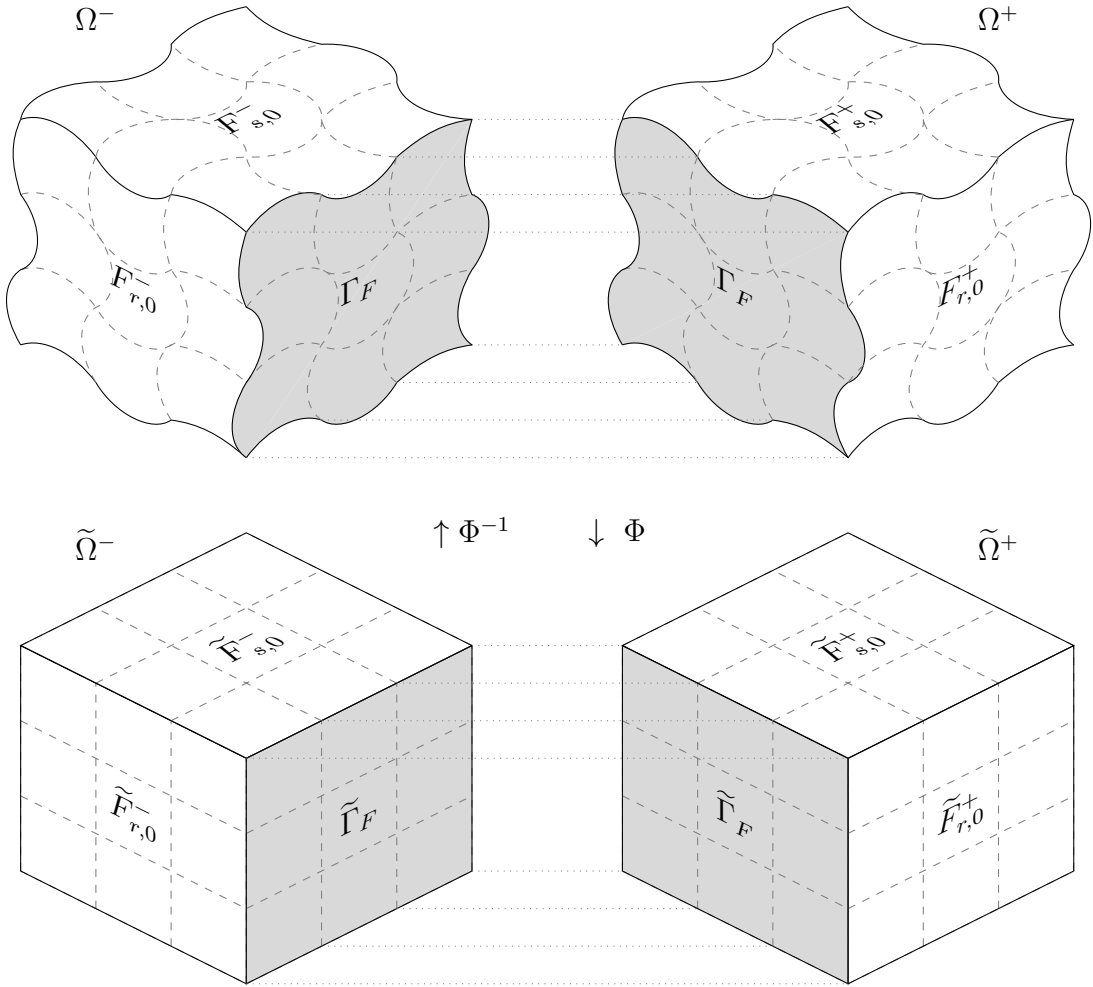
  We consider two geometrically complex elastic solids, in the bounded domains $\Omega^{-}$ and $\Omega^{+}$,  separated by a smooth fault of arbitrarily shaped surface at the interface $\Gamma_{F}$, see Figure \ref{fig:cur-transform}. 
    The two elastic solids, $\Omega^{-}$ and $\Omega^{+}$ are held together at the interface $\Gamma_{F}$ by a high level of  static frictional strength. 
 We are particularly interested in accurate modelling and efficient simulation  of the dynamics of the fault-interface undergoing a nonlinear frictional sliding, when a load of finite magnitude overcomes the  frictional strength at the interface, as well as the numerical computation of the  emitted elastic waves propagating in the medium. 
    \subsection{Elastic wave equation in curvilinear coordinates}
    As in \cite{DURU2022110966,DuruandDunham2016}, for efficient numerical treatments, we map each complex elastic solids $\Omega^{\pm}$  to a reference  unit cube  $\Omegat^{\pm}$ defined by the curvilinear coordinates $(q, r, s) \in \Omegat = [0, 1]^3$, see Figure \ref{fig:cur-transform}.
 
 The Jacobian determinant for the transformation $\Phi^{-1}$ can be written as
\begin{align*}
	J = x_q (y_r z_s - z_r y_s) - y_q(x_r z_s - z_r x_s) + z_q(x_r y_s - y_r x_s).
\end{align*}
Here $\xi_{\eta}$ is the partial derivative ${\partial \xi}/{\partial \eta}$ for $\xi,\eta \in \{x,y,z,q,r,s\}$.
Similarly, 
\begin{align*}
	q_x = \frac{1}{J} (y_r z_s - z_r y_s) && r_x = \frac{1}{J} (z_q y_s - y_q z_s) && s_x = \frac{1}{J} (y_q z_r - z_q y_r), \\
	q_y = \frac{1}{J} (z_r x_s - x_r z_s) && r_y = \frac{1}{J} (x_q z_s - z_q x_s) && s_y = \frac{1}{J} (z_q x_r - x_q z_r), \\
	q_z = \frac{1}{J} (z_r y_s - y_r x_s) && r_z = \frac{1}{J} (y_q x_s - y_s x_q) && s_z = \frac{1}{J} (x_q y_r - x_r y_q) .
\end{align*}

 Let $\bm{Q} = \left(\mathbf{v},  \bm{\sigma}\right)^T$ denote the unknown vector fields in the medium where $\bm{\sigma} \coloneqq (\sigma_{xx},\sigma_{yy},\sigma_{zz}\sigma_{xy},\sigma_{xz},\sigma_{yz})^T$ is the vector of stresses and $\mathbf{v} \coloneqq (v_x,v_y,v_z)^T$ is the particle velocity vector in three space dimensions. The material parameters and the unknown vector fields in the sub-blocks $\Omega^{-}$ and $\Omega^{+}$ will be denoted with the superscripts $-, +$, respectively, that is $\{\mathbf{Q}^{-}, \mathbf{Q}^{+}\}$.
 
  Let $t\ge 0$ be the time variable. For $\mathbf{Q} \in \{\mathbf{Q}^{-}, \mathbf{Q}^{+}\}$,  the equation of motion for each sub-block  in the  transformed  curvilinear coordinates $(q, r, s)$ is given by the elastic wave equation 
\begin{align}\label{eq:transformedEQ}
    \Pt^{-1} \pf{}{t} \bm{Q} = \nabla \cdot \bm{F} (\bm{Q}) + \sum_{\xi \in \{q,r,s\} } \bm{B}_{\xi} (\nabla \bm{Q} ),
\end{align}
where
\begin{align}
         \bm{F}_{\xi} (\bm{Q} )  = 
         \begin{pmatrix}
            J( { \xi_x} \sigma_{xx} +  { \xi_y} \sigma_{xy} +  { \xi_z} \sigma_{xz})\\
            J( { \xi_x} \sigma_{xx} +  { \xi_y} \sigma_{xy} +  { \xi_z} \sigma_{xz})\\
            J( { \xi_x} \sigma_{xx} +  { \xi_y} \sigma_{xy} +  { \xi_z} \sigma_{xz})\\
            0 \\
            0 \\
            0 \\
            0 \\
            0 \\
            0 
         \end{pmatrix},
         &&
         \bm{B}_{\xi} (\nabla \bm{Q}) =
         \begin{pmatrix}
            0 \\
            0 \\
            0 \\
            J  { \xi_x} \pf{v_x}{\xi} \\
            J  { \xi_y} \pf{v_y}{\xi}\\
            J  { \xi_z} \pf{v_z}{\xi}\\
            J( { \xi_y} \pf{v_x}{\xi} +  { \xi_x} \pf{v_y}{\xi}) \\
            J( { \xi_z} \pf{v_x}{\xi} +  { \xi_x} \pf{v_z}{\xi}) \\
            J( { \xi_z} \pf{v_y}{\xi} +  { \xi_y} \pf{v_z}{\xi}) 
         \end{pmatrix},
    \end{align}
with the gradient operator redefined as $\nabla \coloneqq (\pf{}{q},\pf{}{r},\pf{}{s})$ and $\Pt = J^{-1} \bm{P}$, where
\begin{align}\label{eq:S matrix}
        \bm{P} \coloneqq \begin{pmatrix} \rho^{-1} \bm{1} & \bm{0} \\ \bm{0}^T & \bm{C} \end{pmatrix}, && \bm{S}^{-1} \coloneqq \bm{C}.
    \end{align}
Here $\rho : \Omega \mapsto \R_+$ is the density of the medium, $\bm{S}$ is the compliance matrix and $\bm{C}$ is the stiffness matrix of elastic coefficients, present in Hooke's law. In general anisotropic elastic medium  $\bm{C}$ is described by 21 independent coefficients. 
If we consider  an isotropic elastic medium $\bm{C}$ will be described the two independent Lam\'e parameters  $\mu, \lambda \in \R$
 with $\mu > 0$, $\lambda > -\mu $, \cite{Achenbach1973,MarsdenHughes1994,DuruandDunham2016}.
However, as shown in \cite{MarsdenHughes1994} (pages 241--243), for strong ellipticity  we must  have $\mu > 0$, $\lambda > -\mu/2 $. 

  %
 
    Nominally, we will take the positive block to be the `right block', and the negative block to be the `left block'. 
We state a lemma which was proven in \cite{Duru_exhype_2_2019}
\begin{lemma}\label{lem:anti_symmetry}
Consider the split spatial operators in the transformed equation \eqref{eq:transformedEQ}.
The conservative flux term $\bm{F}_{\xi} (\bm{Q} )$ and the non-conservative-products flux term $\bm{B}_{\xi} (\nabla \bm{Q})$ defined in \eqref{eq:transformedEQ} satisfy the skew-symmetric property, that is
\[
\mathbf{Q}^T\mathbf{B}_\xi \left(\grad\mathbf{Q}\right) - \frac{\partial \mathbf{Q}^T}{\partial \xi} \mathbf{F}_\xi \left(\mathbf{Q} \right) = 0.
\]
\end{lemma}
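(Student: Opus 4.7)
The plan is to verify the identity by direct componentwise expansion, exploiting the complementary block structure of $\mathbf{F}_\xi$ and $\mathbf{B}_\xi$. Observe that since $\bm{Q}=(\mathbf{v},\bm{\sigma})^T$ and $\mathbf{F}_\xi(\bm{Q})$ is nonzero only in the three velocity slots (with entries linear in stresses), while $\mathbf{B}_\xi(\nabla\bm{Q})$ is nonzero only in the six stress slots (with entries linear in the derivatives $\partial \mathbf{v}/\partial\xi$), both terms in the identity reduce to bilinear forms in $\{\sigma_{ij}\}$ and $\{\partial v_k/\partial\xi\}$, multiplied by the metric coefficients $J\xi_x$, $J\xi_y$, $J\xi_z$. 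Matching these two bilinear forms then amounts to collecting and comparing coefficients.

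First I would compute $\bm{Q}^T \mathbf{B}_\xi(\nabla\bm{Q}) = \bm{\sigma}^T\cdot(\text{stress block of }\mathbf{B}_\xi)$, writing out all six contributions one stress component at a time. I would then regroup the result by the three velocity derivatives $\partial v_x/\partial\xi$, $\partial v_y/\partial\xi$, $\partial v_z/\partial\xi$. The diagonal-stress entries contribute only to the matching diagonal derivative, while the three shear-stress entries each contribute to two different derivatives, e.g.\ the $\sigma_{xy}$-row $J(\xi_y\,\partial v_x/\partial\xi+\xi_x\,\partial v_y/\partial\xi)$ splits between the coefficient of $\partial v_x/\partial\xi$ and that of $\partial v_y/\partial\xi$. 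After regrouping, the coefficient of $J\,\partial v_i/\partial\xi$ turns out to be exactly $\sum_{j\in\{x,y,z\}} \xi_j\,\sigma_{ij}$ for each $i\in\{x,y,z\}$.

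Second, I would compute $(\partial \bm{Q}^T/\partial\xi)\,\mathbf{F}_\xi(\bm{Q}) = (\partial\mathbf{v}^T/\partial\xi)\cdot(\text{velocity block of }\mathbf{F}_\xi)$. By construction, row $i$ of $\mathbf{F}_\xi$ is $J(\xi_x\sigma_{ix}+\xi_y\sigma_{iy}+\xi_z\sigma_{iz})$, so multiplying by $\partial v_i/\partial\xi$ and summing over $i\in\{x,y,z\}$ produces the same coefficients $J\sum_j \xi_j\sigma_{ij}$ as in the first step, establishing termwise equality.

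The one delicate point is the accounting for the shear-stress rows in $\mathbf{B}_\xi$: the mixed forms $\xi_y\,\partial v_x/\partial\xi+\xi_x\,\partial v_y/\partial\xi$ must recombine, upon regrouping by velocity derivative, into contributions that are consistent with the rows of $\mathbf{F}_\xi$. This works precisely because of the symmetry of the Cauchy stress tensor, $\sigma_{ij}=\sigma_{ji}$, which identifies off-diagonal entries used in row $i$ of $\mathbf{F}_\xi$ with those appearing in the row of $\mathbf{B}_\xi$ indexed by the shear component. No deeper analytic ingredient is required beyond this symmetry and careful bookkeeping; the identity is essentially the curvilinear-coordinate reformulation of the standard skew-symmetry between velocity gradient and divergence-of-stress pairings used in energy estimates for elastic-wave systems.
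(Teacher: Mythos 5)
Your proof is correct, and since the paper itself does not prove Lemma~\ref{lem:anti_symmetry} in-text (it cites \cite{Duru_exhype_2_2019}), your direct componentwise verification is exactly the kind of argument that reference supplies. The key structural observations — that $\mathbf{F}_\xi$ and $\mathbf{B}_\xi$ occupy complementary blocks of the state vector, so each side of the identity collapses to a bilinear form in $\{\sigma_{ij}\}$ and $\{\partial v_k/\partial\xi\}$, and that the regrouping by velocity derivatives closes precisely because $\sigma_{ij}=\sigma_{ji}$ — are the whole content of the lemma. One thing worth flagging: the paper's displayed expression for $\bm{F}_\xi(\bm{Q})$ as printed has all three nonzero rows identical, which is clearly a typesetting slip; you correctly read the intended form, namely row $i$ equal to $J\sum_{j\in\{x,y,z\}}\xi_j\sigma_{ij}$, which is the only form consistent with the non-conservative term $\bm{B}_\xi$, with identity \eqref{state_times_flux}, and with the symmetric Cauchy stress tensor in \eqref{eq:velocity_tractions}. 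With that reading your bookkeeping — diagonal stresses each feeding one derivative, each shear stress splitting across two — matches termwise and the lemma follows with no gap.
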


On the interface $(x_0,y_0,z_0) \in \Gamma_F$, define the positively pointing unit normal vector
\begin{align}\label{eq:normal_vector}
    \bm{n} (x_0,y_0,z_0) =  \frac{1}{|\mathbf{q}_\xi|} \begin{pmatrix} q_x \\ q_y \\ q_z \end{pmatrix} \Big|_{(x_0,y_0,z_0)},  \quad |\mathbf{q}_\xi| = \sqrt{q_x^2 + q_y^2 + q_z^2}.
\end{align}
The outward unit normal on the fault for left block $\Omega^{-}$ is 
$\bm{n}^{-} = \bm{n} $, and $\bm{n}^{+} = -\bm{n}$ for right block $\Omega^{+}$.
Define the  Cartesian components of the velocity vector $\mathbf{v}$ and traction vector $\mathbf{T}$ on the fault interface $ \Gamma_F$
\begin{align}\label{eq:velocity_tractions}
\mathbf{v} = \begin{pmatrix}
v_{x}\\
v_{y}\\
v_{z}
\end{pmatrix},
 \quad
\mathbf{T} = \begin{pmatrix}
T_{x} \\
T_{y} \\
T_{z} 
\end{pmatrix} = \bar{\bar{\sigma}}\mathbf{n},
\quad
 \bar{\bar{\sigma}} =
\begin{pmatrix}
\sigma_{xx}&\sigma_{xy}&\sigma_{xz}\\
\sigma_{xy}&\sigma_{yy}&\sigma_{yz}\\
\sigma_{xz}&\sigma_{yz}&\sigma_{zz}
\end{pmatrix}.
\end{align}
Another important consequence of the choice of the split spatial operators in the transformed equation \eqref{eq:transformedEQ} is the fact
\begin{align}\label{state_times_flux}
 \mathbf{Q}^T\mathbf{F}_q \left(\mathbf{Q} \right)  = {J} |\mathbf{q}_\xi|\mathbf{v}^T\mathbf{T}.
\end{align}
Introduce the ${L}^2$ inner-product $\langle \cdot, \cdot \rangle$ in the transformed coordinates given through 
\begin{align}
\langle \mathbf{Q}, \mathbf{F} \rangle \coloneqq \int_{\widetilde{\Omega}} {\left(\mathbf{Q}^T\mathbf{F}\right) Jdqdrds}.
\end{align}
Define the mechanical energy in the medium
We introduce the weighted $L^2$-norm $\|Q\|_{{{P}}}$, and mechanical energy $E(t)$ defined by
\begin{align}\label{eq:energy_cts}
{E}(t):=\|Q\|_{P}^2 =  \langle\mathbf{Q}, \frac{1}{2} \widetilde{\mathbf{P}}^{-1} \mathbf{Q}\rangle =  \int_{\widetilde{\Omega}}\left(\frac{\rho}{2} \abs{\mathbf{v}}^2  + \frac{1}{2}\boldsymbol{\sigma}^T\mathbf{S}\boldsymbol{\sigma}\right)Jdqdrds, 
\end{align}
where $E(t)$ is the sum of the kinetic energy and the strain energy. 
\begin{theorem}\label{Theo:energy_estimate_BC}
Consider the transformed equation of motion \eqref{eq:transformedEQ} defined in the negative and positive elastic blocks, $\widetilde{\Omega}^{-}$ and $\widetilde{\Omega}^{+}$. We denote the elastic energy in the elastic blocks by $E^{\pm}(t)$ and particle velocity and traction vectors at both sides of the interface $\widetilde{\Gamma}_{F}$ by $\mathbf{v}^{\pm}$, $\mathbf{T}^{\pm}$.
The total  mechanical energy in the medium $E(t) = E^-\left(t\right) + E^+\left(t\right)$  satisfy 
  \begin{align}\label{eq:energy_estimate_bc}
\frac{d}{dt}E(t) = \mathrm{IT}_{s}=-\int_{\widetilde{\Gamma}_{F}} \left(\left(\mathbf{v}^+\right)^T\mathbf{T}^+ - \left(\mathbf{v}^-\right)^T\mathbf{T}^- \right)|\mathbf{q}_\xi|Jdrds.
 \end{align}
 \end{theorem}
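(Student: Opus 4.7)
The plan is to differentiate the energy functional in time, substitute the equation of motion, apply Lemma \ref{lem:anti_symmetry} to symmetrise the spatial operators, and then integrate by parts so that only the boundary terms at the fault interface survive.

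First I would compute $\frac{d}{dt}E^{\pm}(t)$ for each sub-block separately. Since $\widetilde{\mathbf{P}}$ is independent of $t$, we have
\begin{align*}
\frac{d}{dt}E^{\pm}(t) = \left\langle \mathbf{Q}^{\pm}, \widetilde{\mathbf{P}}^{-1}\pf{\mathbf{Q}^{\pm}}{t}\right\rangle_{\widetilde{\Omega}^{\pm}}.
\end{align*}
Using the transformed equation \eqref{eq:transformedEQ}, this becomes
\begin{align*}
\frac{d}{dt}E^{\pm}(t) = \int_{\widetilde{\Omega}^{\pm}} \left(\sum_{\xi}\left[ \left(\mathbf{Q}^{\pm}\right)^T\pf{\mathbf{F}_\xi(\mathbf{Q}^{\pm})}{\xi} + \left(\mathbf{Q}^{\pm}\right)^T\mathbf{B}_\xi(\nabla\mathbf{Q}^{\pm})\right]\right)dqdrds,
\end{align*}
where the Jacobian $J$ cancels against $\widetilde{\mathbf{P}}^{-1} = J \mathbf{P}^{-1}$ when paired against the fluxes defined with the factor $J$.

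Next I would invoke Lemma \ref{lem:anti_symmetry} to rewrite the non-conservative term $\left(\mathbf{Q}^{\pm}\right)^T\mathbf{B}_\xi(\nabla\mathbf{Q}^{\pm}) = \pf{\left(\mathbf{Q}^{\pm}\right)^T}{\xi}\mathbf{F}_\xi(\mathbf{Q}^{\pm})$. Combining this with the conservative term gives a perfect derivative:
\begin{align*}
\left(\mathbf{Q}^{\pm}\right)^T\pf{\mathbf{F}_\xi(\mathbf{Q}^{\pm})}{\xi} + \pf{\left(\mathbf{Q}^{\pm}\right)^T}{\xi}\mathbf{F}_\xi(\mathbf{Q}^{\pm}) = \pf{}{\xi}\left[\left(\mathbf{Q}^{\pm}\right)^T\mathbf{F}_\xi(\mathbf{Q}^{\pm})\right].
\end{align*}
Summing over $\xi\in\{q,r,s\}$ and applying the divergence theorem on the reference cube $\widetilde{\Omega}^{\pm}$, every boundary face contributes a flux term. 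Assuming the external (non-fault) faces are closed by energy-neutral boundary conditions (free-surface / non-reflecting / etc.), their contributions vanish, and only the fault face $\widetilde{\Gamma}_F$ (a $q=\text{const}$ face) survives. Using the identity \eqref{state_times_flux}, the surviving contribution from each block is
\begin{align*}
\frac{d}{dt}E^{\pm}(t) = \pm\int_{\widetilde{\Gamma}_F}\left(\mathbf{v}^{\pm}\right)^T\widetilde{\mathbf{T}}^{\pm}\,|\mathbf{q}_\xi|J\,drds,
\end{align*}
where the sign is dictated by the orientation of the outward normal $\mathbf{n}^{\pm} = \mp \mathbf{n}$, and $\widetilde{\mathbf{T}}^{\pm}$ denotes the traction evaluated from $\bar{\bar{\sigma}}^{\pm}\mathbf{n}$ on each side.

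Finally, adding $\frac{d}{dt}E^{-}(t)$ and $\frac{d}{dt}E^{+}(t)$ yields the stated identity \eqref{eq:energy_estimate_bc}. The main obstacle, in my view, is bookkeeping the normal orientations: the negative block sees the interface as a $q=1$ face with outward normal $+\mathbf{n}$, while the positive block sees a $q=0$ face with outward normal $-\mathbf{n}$, and one must consistently track the sign coming from the divergence theorem together with the sign convention in $\mathbf{T}^{\pm} = \bar{\bar{\sigma}}^{\pm}\mathbf{n}$ defined in \eqref{eq:velocity_tractions}. Once this is done correctly the two boundary contributions assemble into the difference $(\mathbf{v}^+)^T\mathbf{T}^+ - (\mathbf{v}^-)^T\mathbf{T}^-$ displayed in \eqref{eq:energy_estimate_bc}.
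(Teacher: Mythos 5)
Your proof is correct and follows essentially the same route as the paper: time-differentiate the energy, substitute the equation of motion, use Lemma~\ref{lem:anti_symmetry} to eliminate the volume integral (whether you apply it before or after the integration by parts is a cosmetic reordering), and evaluate the surviving boundary flux at $\widetilde{\Gamma}_F$ via \eqref{state_times_flux}. One sign slip worth fixing: with $\mathbf{n}^{\pm}=\mp\mathbf{n}$ and your convention $\widetilde{\mathbf{T}}^{\pm}=\bar{\bar{\sigma}}^{\pm}\mathbf{n}=\mathbf{T}^{\pm}$, the divergence theorem on the fault face (a $q=1$ face for $\widetilde{\Omega}^{-}$, a $q=0$ face for $\widetilde{\Omega}^{+}$) gives $\frac{d}{dt}E^{\pm}=\mp\int_{\widetilde{\Gamma}_F}(\mathbf{v}^{\pm})^T\mathbf{T}^{\pm}\,|\mathbf{q}_\xi|J\,dr\,ds$, not $\pm$, and it is exactly this $\mp$ that produces the leading minus sign in \eqref{eq:energy_estimate_bc}.
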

\begin{proof}
For each $\mathbf{Q} \in \{\mathbf{Q}^-, \mathbf{Q}^+\}$, consider 
    \begin{align}\label{eq:energy_transformed_1}
\frac{d}{dt}\langle\mathbf{Q}, \frac{1}{2} \widetilde{\mathbf{P}}^{-1} \mathbf{Q}\rangle = \int_{\widetilde{\Omega}}\mathbf{Q}^T \div \mathbf{F} \left(\mathbf{Q} \right) dqdrqs + \sum_{{\xi}= q, r, s}\int_{\widetilde{\Omega}}\mathbf{Q}^T\mathbf{B}_{\xi}\left(\grad\mathbf{Q}\right) dqdrqs .
\end{align}
 On the left hand side  of \eqref{eq:energy_transformed_1} we recognize time derivative of the energy.
  On the right hand side of \eqref{eq:energy_transformed_1}, we integrate the conservative flux term by parts and ignore contributions from the boundaries in the $r$- and $s$-directions, we have
      \begin{align}\label{eq:energy_transformed_4}
\frac{d}{dt}\langle\mathbf{Q}, \frac{1}{2} \widetilde{\mathbf{P}}^{-1} \mathbf{Q}\rangle &= \sum_{{\xi}= q, r, s}\int_{\widetilde{\Omega}}\left[\mathbf{Q}^T\mathbf{B}_\xi \left(\grad\mathbf{Q}\right) - \frac{\partial \mathbf{Q}^T}{\partial \xi} \mathbf{F}_\xi \left(\mathbf{Q} \right)\right]dqdrds \\
&+ \int_{0}^{1}\int_{0}^{1}\left(\mathbf{v}^T\mathbf{T}|\mathbf{q}_\xi|{J}\Big|_{q = 1} - \mathbf{v}^T\mathbf{T}|\mathbf{q}_\xi|J\Big|_{q = 0}\right)drds.
\nonumber
\end{align}
 By Lemma \ref{lem:anti_symmetry}, the volume terms in \eqref{eq:energy_transformed_4} vanish. Collecting contributions from both sides of the interface and considering boundary terms only from the fault interface $\widetilde{\Gamma}_{F}$ having
  \begin{align}\label{eq:energy_transformed_3}
\frac{d}{dt}E(t) = \mathrm{IT}_{s}=-\int_{\widetilde{\Gamma}_{F}} \left(\left(\mathbf{v}^+\right)^T\mathbf{T}^+ - \left(\mathbf{v}^-\right)^T\mathbf{T}^- \right)|\mathbf{q}_\xi|Jdrds.
 \end{align}
\end{proof}

Next, we will discuss frictional interface conditions coupling the two blocks at the interface, $\Gamma_F$. In general, friction laws are designed such that the interface term is never positive $\mathrm{IT}_{s} \le 0$, and dissipates the mechanical energy in the medium.

\subsection{Frictional interface conditions}
Here, we prescribe the nonlinear frictional interface conditions coupling the elastic solids. 
On the interface $(x_0,y_0,z_0) \in \Gamma_{F}$, consider the positively pointing unit normal vector $\bm{n}(x_0,y_0,z_0)$ defined  in \eqref{eq:normal_vector}.
Next we  form a locally spanning orthonormal basis with the vectors $\bm{m}(x_0,y_0,z_0)$ and $\bm{l}(x_0,y_0,z_0)$ given through a change of variable to the computational space
\begin{align*}
    \bm{m}\left(x_0,y_0,z_0\right) \coloneqq 
     \frac{\bm{m_0} - \l \bm{n} , \bm{m}_0 \r \bm{n} }{|\bm{m_0} - \l \bm{n} , \bm{m}_0 \r \bm{n}|} \Big|_{\left(x_0,y_0,z_0\right)}, 
    \quad 
    \bm{l} \left(x_0,y_0,z_0\right) \coloneqq  \bm{n} \times \bm{m}\Big|_{\left(x_0,y_0,z_0\right)},
\end{align*}
where $\bm{m}_0$ is a vector not in the span of $\bm{n}$. 
We often drop the evaluation point when this is clear from context.

For each point $(x_0,y_0,z_0)$ on the interface, we denote the particle velocity and traction vectors at both sides of the interface $\widetilde{\Gamma}_{I}$ by $\mathbf{v}^{\pm}$, $\mathbf{T}^{\pm}$. Thus $\mathbf{T}^-$ is the traction exerted by the left block on the fault interface and $-\mathbf{T}^+$ is the traction exerted by the right block on the fault interface.
Rotate the traction and particle velocity into the local orthonormal basis vector
\begin{align}
    \begin{pmatrix}
    v_n^\pm\\
    v_m^\pm\\
    v_\ell^\pm
    \end{pmatrix}
    =
    \mathbf{R}\mathbf{v}^{\pm},
    \quad
    \begin{pmatrix}
    T_n^\pm\\
    T_m^\pm\\
    T_\ell^\pm
    \end{pmatrix}
    =
    \mathbf{R}\mathbf{T}^{\pm},
    \quad
    \mathbf{R} = \begin{pmatrix}
 \bm{n}^T  \\
\bm{m}^T\\
\bm{l}^T 
\end{pmatrix}.
\end{align}
Note that $\mathbf{R}^{-1} = \mathbf{R}^T$.

Newtonian physics imposes the force balance condition
\begin{align}\label{faulttraction}
    \mathbf{T}^{-} = -(-\mathbf{T}^{+}) = \mathbf{T}^{+} \iff T_j^- = T_j^+ = T_j,\quad j \in \{ n, m, l \}.
\end{align}

The particle velocity can be discontinuous across the interface, hence we define the jump of velocities across the interface by
\begin{align}
    \lJump{{v}_j \rJump} \coloneqq v_{j}^{+} - v_{j}^{-}, \quad j \in \{ l, m, n \}.
\end{align}
It is assumed that the interface does not open, that is
\begin{align}
     \lJump{{v}_n \rJump} : = 0 \qquad \text{for all } t \in [0,T],
\end{align}
We allow for shear ruptures. 
Hence the magnitude of the slip rate $V$ and the shear strength $\tau$ are calculated by
\begin{align}\label{sliprate_and_strength}
    V  =  \left( \lJump{{v}_l \rJump}^2 + \lJump{{v}_m \rJump}^2 \right)^{1/2}, \quad \tau = \left(T_m^2 + T_l^2\right)^{1/2}.
\end{align}

Define the compressive-norm-stress $\sigma_n \coloneqq -T_n > 0$, on the interface. 
The shear strength $\tau$ is constrained by  the frictional constitutive relation 
\begin{align}\label{rate-and-state-frictionlaw}
    \tau = f(V, \psi) \sigma_n,
\end{align}
where $f(V, \psi) > 0$ is a generic nonlinear friction, coefficient encapsulated in the rate-and-state framework, $V$ is the slip-rate, and $\psi$ is a state variable that will be defined below.
Further, we assume that the motion is parallel to the shear traction (forces), that is
\begin{align}
    \frac{T_j}{\tau} =  \frac{\lJump{{v}_j \rJump}}{V},  \quad j \in \{ l, m\},  
 \end{align}
and we have
\begin{align}\label{eq:Friction_law_0}
    {T_j} =   \alpha \lJump{{v}_j \rJump}, \quad \alpha( V, \sigma_n, \psi) = \sigma_n\frac{f(V, \psi)}{V} \ge 0, \quad j \in \{ l, m \}.
\end{align}
The functional $\alpha\ge 0$ parameterises the frictional strength of the fault interface. Note that $\alpha \to \infty \implies V \to 0$, any load of finite magnitude can never overcome the frictional strength and break the interface.
The interface conditions can be summarised as:
\begin{align}
    &\text{Force balance}, \quad {T}_j^- = {T}^+_j = T_j;
    \label{eq:Force_balance}\\
    &\text{No opening}, \quad \lJump{{v}_n \rJump}  = 0;
    \label{eq:No_opening}\\
    &\text{Friction law}, \quad   {T_j} =   \alpha \lJump{{v}_j \rJump} , \quad \alpha = \sigma_n\frac{f(V, \psi)}{V} \ge 0, \quad j \in \{ l, m \}.
     \label{eq:Friction_law}
\end{align}

\begin{theorem} [Theorem 1, \cite{DuruandDunham2016}] \label{theo:boundedness}
Consider the transformed equation of motion \eqref{eq:transformedEQ} defined in the negative and positive elastic blocks, $\widetilde{\Omega}^{-}$ and $\widetilde{\Omega}^{+}$, subject to the nonlinear interface conditions \eqref{eq:Force_balance}--\eqref{eq:Friction_law} at the interface $\widetilde{\Gamma}_{I}$. We denote the mechanical energy in the medium by $E^{\pm}(t)$.
The total  mechanical energy in the medium $E(t) = E^-\left(t\right) + E^+\left(t\right)$  satisfy 
\begin{align}
    \frac{d}{dt}{E}(t) = \mathrm{IT}_{s}=-\int_{\widetilde{\Gamma_F}} \alpha |V|^2 \ J|\mathbf{q_\xi}| drds\le 0, \quad \alpha |V|^2= \sigma_n \ V f(V, \psi) \ge 0. 
\end{align}
\end{theorem}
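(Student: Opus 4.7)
The plan is to start from Theorem \ref{Theo:energy_estimate_BC}, which already gives the rate of change of the total mechanical energy as the interface functional
\[
\frac{d}{dt}E(t) = -\int_{\widetilde{\Gamma}_F}\bigl((\mathbf{v}^+)^T\mathbf{T}^+ - (\mathbf{v}^-)^T\mathbf{T}^-\bigr)|\mathbf{q}_\xi|J\,drds.
\]
The remaining task is to rewrite the integrand using the three interface conditions \eqref{eq:Force_balance}--\eqref{eq:Friction_law} and show that it collapses to $\alpha V^{2}\ge 0$, which after the minus sign yields the claimed non-positive energy-rate.

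First I would express the dot products $(\mathbf{v}^{\pm})^T\mathbf{T}^{\pm}$ in the local orthonormal basis $\{\bm{n},\bm{m},\bm{l}\}$ at each point of $\widetilde{\Gamma}_F$. Since $\mathbf{R}$ is orthogonal and $\mathbf{R}^{-1}=\mathbf{R}^T$, inner products are preserved, so
\[
(\mathbf{v}^{\pm})^T\mathbf{T}^{\pm} = v_n^{\pm}T_n^{\pm} + v_m^{\pm}T_m^{\pm} + v_l^{\pm}T_l^{\pm}.
\]
Subtracting the two sides, the force-balance condition \eqref{eq:Force_balance} lets me pull out a common traction component $T_j$ for $j\in\{n,m,l\}$, producing
\[
(\mathbf{v}^+)^T\mathbf{T}^+ - (\mathbf{v}^-)^T\mathbf{T}^- = \lJump v_n\rJump T_n + \lJump v_m\rJump T_m + \lJump v_l\rJump T_l.
\]

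Next, I apply the no-opening condition \eqref{eq:No_opening}, $\lJump v_n\rJump = 0$, which eliminates the normal term. The two remaining shear terms are handled by the friction law \eqref{eq:Friction_law}: substituting $T_j = \alpha\lJump v_j\rJump$ for $j\in\{l,m\}$ gives
\[
\lJump v_m\rJump T_m + \lJump v_l\rJump T_l = \alpha\bigl(\lJump v_m\rJump^2 + \lJump v_l\rJump^2\bigr) = \alpha V^{2},
\]
where the last equality uses the definition of the slip-rate magnitude in \eqref{sliprate_and_strength}. Putting this back into the surface integral yields
\[
\frac{d}{dt}E(t) = -\int_{\widetilde{\Gamma}_F}\alpha V^{2}\,|\mathbf{q}_\xi|J\,drds,
\]
and because $\alpha(V,\sigma_n,\psi)=\sigma_n f(V,\psi)/V \ge 0$, the integrand is non-negative so the energy is monotonically non-increasing. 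Rewriting $\alpha V^{2}=\sigma_n V f(V,\psi)$ finishes the identity in the statement.

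There is no real obstacle here since the required algebraic identity is straightforward once the rotation is in place; the only point that deserves a sentence of care is the orthogonality of $\mathbf{R}$, so that no Jacobian factor contaminates the pointwise inner product, and the implicit assumption (already in the setup) that the local basis $\{\bm{n},\bm{m},\bm{l}\}$ is the same on both sides of the fault, which is what allows the force balance to be expressed component-wise before forming the velocity jump.
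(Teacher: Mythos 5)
Your proof is correct and takes essentially the same route as the paper: start from Theorem~\ref{Theo:energy_estimate_BC}, rotate into the local $(n,m,l)$ basis, apply force balance to form the velocity jumps, kill the normal term with no-opening, and substitute the friction law to obtain $-\int \alpha V^2\,|\mathbf{q}_\xi|J\,dr\,ds$. The only difference is that you spell out the orthogonality of $\mathbf{R}$ and the use of the same local frame on both sides, details the paper leaves implicit.
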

\begin{proof}
The proof follows from Theorem \ref{eq:energy_estimate_bc} and the interface conditions \eqref{eq:Force_balance}--\eqref{eq:Friction_law}.
Consider the interface term in \eqref{eq:energy_estimate_bc} with force balance \eqref{eq:Force_balance}, we have 
 \begin{align*}
 \mathrm{IT}_{s}=-\int_{\widetilde{\Gamma}_{I}} \left(T_l\lJump{{v}_l \rJump} + T_m\lJump{{v}_m \rJump} + T_n\lJump{{v}_n \rJump}\right)|\mathbf{q}_\xi|Jdr \, ds.
 \end{align*}
 The no opening condition $\lJump{{v}_n \rJump}  = 0$ and the friction law \eqref{eq:Friction_law} yield the result
 \begin{align*}
 \mathrm{IT}_{s}=-\int_{\widetilde{\Gamma_F}} \alpha |V|^2  \ J|\mathbf{q_\xi}| dr \, ds<0, \quad \alpha = \sigma_n\frac{f(V, \psi)}{V} \ge 0.
\end{align*}
\end{proof}
In particular, the interface term $\mathrm{IT}_{s}$ encodes the rate of work done by friction on the fault which is dissipated as heat. 
Note that Theorem \ref{theo:boundedness} proves that friction dissipates energy and the solutions of IBVP  are bounded.  
This does not prove well-posedness of IBVP, though. 
The proof of well-posedness of IBVPs for non-linear friction laws is beyond the scope of this work.

Below we will describe typical non-linear friction laws considered in this study.
\subsubsection{Slip-weakening friction law}
%
We will consider first the slip-weakening friction law \cite{andrews1985dynamic}. This is a regularisation of Coulomb's friction, and it is widely used  in applications e.g., to model megathrust earthquake dynamics \cite{ramos2019transition}.
The interface is held initially by a high-level of static friction coefficient, and the  evolving friction coefficient weakens linear with slip $S$ until it reaches the dynamic friction coefficient, as 
\begin{equation}\label{eq:slip-weakening}
\begin{split}
& f\left( S\right)  = \left \{
\begin{array}{rl}
 f_s -\left(f_s-f_d\right)\frac{ {S}}{d_c},  & \text{if}  \quad {S} \le d_c,\\
f_d, \quad {}  \quad {}& \text{if} \quad {S} \ge d_c,
\end{array} \right.
\end{split}
\end{equation}
where $ 0< f_d < f_s$  are the dynamic and static friction coefficients, $d_c>0$ is the critical slip-distance. The slip $S$ evolves according to
\begin{align}
\frac{dS}{dt} = V,
\end{align}
where $V $ is the absolute slip-rate defined in \eqref{sliprate_and_strength}. 
We introduce the peak frictional strength on the fault $\tau_p =  f_s \sigma_n$ and the residual  frictional strength on the fault $\tau_r =  f_d \sigma_n$, where $\sigma_n > 0$ is the compressive-normal-stress.
By \eqref{eq:slip-weakening}, as soon as the load on the fault exceeds the peak strength $\tau_p$, the interface will begin to slip and  the frictional strength will weaken linearly with slip $S$,  until slip  reaches the critical slip-distance  $d_c$. 
When the fault is fully weakened  its residual strength is $\tau_r$. After yielding, the  rupture will spread to adjacent parts of the fault and will be arrested when it meets unfavourable conditions due to geometrical or stress asperities.

\subsubsection{Rate-and-state friction law}\label{sec: rate-and-state friction law}
Modern nonlinear friction laws are encapsulated in the rate-and-state framework \eqref{rate-and-state-frictionlaw}, $\tau = f(V, \psi) \sigma_n$
with $f(V, \psi) \ge 0$, $f\left(0,\psi\right) = 0$ and $\partial f\left(V,\psi\right)/\partial V > 0$. Here, $\sigma_n >0$ is the compressive-normal-stress $f\left(V,\psi\right)$ is the nonlinear friction coefficient,  $V$ is the slip-rate, and $\psi$ is the state variable.
We consider formulations in which the state variable $\psi$ is non-dimensional and is governed by the generic state evolution equation
\begin{equation}
  \frac{d \psi}{d t} = g\left(V,\psi\right).
  \label{eq:StateLaw}
\end{equation}
In general $f\left(V,\psi\right)$ and $g\left(V,\psi\right)$ are empirical expressions obtained from laboratory experiments \cite{dieterich1979modeling,Rice1983,rice1983stability}. 
A typical rate-and-state friction coefficient is \cite{falk2001rice}
\begin{equation}
  f(V,\psi) = a  \operatorname{arcsinh}\left( \frac{V}{2V_0}
  e^{\psi/a} \right),
  \label{eq:regularizedFriction}
\end{equation}
where the friction parameters  $a$ and $V_0$ are real and positive. These parameters will be described below. Note that $f(V,\psi) \ge 0$, for all $\psi$.  Common evolution laws for the  state variable $\psi$ are: \cite{ruina1983slip}
 \begin{enumerate}
 \item aging law:
\begin{equation}
 g(V,\psi) = \frac{b V_0}{d_c} \left( e^{(f_0-\psi)/b} - \frac{V}{V_0} \right) \text{; and,}
  \label{eq:agingLaw}
\end{equation}
\item slip law:
\begin{equation}
  g(V,\psi) = -\frac{V}{d_c}\left(f(V,\psi) - f_{ss}(V)\right),
  \quad
  \label{eq:slipLaw}
\end{equation}
   \end{enumerate}
where $f_{ss}(V)$ is an arbitrary steady state friction coefficient. Some commonly used forms of the steady state friction coefficient are  the standard expression \cite{falk2001rice}
\begin{align}
f_{ss}(V) = f_0 - \left(b-a\right)\ln{\frac{V}{V_0}}.
 \label{eq:standardslipLaw}
\end{align}
There is also the strongly rate-weakening friction law \cite{dunham2011earthquake}, but this will not be used in this work.
%
Here, \emph{a} is the direct effect parameter, \emph{b} is evolution effect parameter, $V_0$ is the reference slip velocity, $d_c$ is the state-evolution distance, $f_0$ is the steady state friction coefficient at $V_0$.

%

\section{Frictional Gluing Variables (Hat-variables)}\label{sec:hat_variable}
We aim to construct a consistent and stable discrete approximation of the IBVP such that a discrete energy estimate analogous to Theorem \ref{theo:boundedness} can be derived. Some of the discussions here can be found in \cite{DuruandDunham2016}. We have however included the details in order to have a self-contained narrative.  The new result in this section is Theorem \ref{thm:uniq_and_existence}, which proves the existence and uniqueness of the frictional gluing variables for nonlinear friction laws in general anisotropic media. 

As in \cite{DuruandDunham2016}, we will construct a numerical treatment of the interface without introducing stiffness such that the semi-discrete problem can be integrated efficiently using an explicit time-stepping method. We will reformulate the interface condition by introducing transformed (hat-) variables so that we can simultaneously construct (numerical) boundary/interface data for particle velocities and traction.  The hat-variables encode the solution of the IBVP on the interface. The hat-variables  will be constructed such that they preserve the amplitude of the outgoing characteristics and exactly satisfy the physical boundary conditions \cite{DuruandDunham2016}. 

Let ${v_j},{T_j}$ be the velocity and traction vectors at the fault interface, with $j \in \{l, m, n\}$. For $Z_j  > 0$, we define the characteristics
\begin{align}\label{eq:charac}
q_j  = \frac{1}{2}\left(Z_j  v_j  + T_j \right), \quad
p_j  = \frac{1}{2}\left(Z_j  v_j  -  T_j \right), \quad  j \in \{ l, m, n\}.
\end{align}
Here, $q_j$ are the left going characteristics and $p_j$ are the right going characteristics.
The outgoing characteristics at the interface are
\begin{align}\label{eq:charac_2}
q_j^+  = \frac{1}{2}\left(Z_j^+  v_j^+  + T_j^+ \right), \quad
p_j^-  = \frac{1}{2}\left(Z_j^-  v_j^-  -  T_j^- \right), \quad  j \in \{ l, m, n \}.
\end{align}
The characteristics are viewed as functions of velocity, traction and impedance. 
We define the hat-variables $\widehat{v}_j ^{\pm}, \widehat{T}_j ^{\pm}$  preserving the amplitude of  the outgoing characteristics at the interface to be
 \begin{align}\label{eq:hat_variables}
\widehat{q}_j ^{+} \left(\widehat{v}_j ^{+}, \widehat{T}_j^{+}, Z_j ^{+}\right)   \coloneqq {q}_j ^{+} \left({v}_j^{+}, {T}_j^{+}, Z_j^{+} \right), 
&&
\widehat{p}_j ^{-} \left(\widehat{v}_j^{-}, \widehat{T}_j^{-}, Z_j^{-}\right)  \coloneqq {p}_j ^{-}\left({v}_j ^{-}, {T}_j^{-}, Z_j^{-} \right).
 \end{align}
The hat-variables also satisfy the  interface conditions, \eqref{eq:Force_balance}--\eqref{eq:Friction_law}, exactly. Given ${q}_j ^{+},  {p}_j ^{-} $, the procedure will solve \eqref{eq:hat_variables} and \eqref{eq:Force_balance}--\eqref{eq:Friction_law} for the hat-variables, $\widehat{v}_j ^{\pm}, \widehat{T}_j ^{\pm}$. 

Combining the two equations in \eqref{eq:hat_variables}  and ensuring force balance, $\widehat{T}_j ^{-} = \widehat{T}_j ^{+} = \widehat{T}_j $, we have
\begin{align}\label{eq:radiation_damping_line}
\widehat{T}_j  + \eta_j  \lJump{\widehat{v}_j \rJump}  = \Phi_j ,    \quad \Phi_j  = \eta_j \left(\frac{2}{Z_j ^{+}}q_j ^{+} - \frac{2}{Z_j ^{-}}p_j ^{-}\right), \quad  \eta_j  = \frac{Z_j ^{+}Z_j ^{-}}{Z_j ^{+} + Z_j ^{-}} >0, \quad j \in \{ l, m, n \}.
\end{align}
Here, $\Phi_j$ are the stress transfer functionals, that is stresses on a locked interface when $\lJump{\widehat{v}_j \rJump} =0$. And by \eqref{eq:radiation_damping_line}, the stresses will be altered by wave radiations when the fault is slipping $\lJump{\widehat{v}_j \rJump} \ne 0$.
Introducing the compressive normal-stress $\widehat{\sigma}_n = -\widehat{T}_n$, as before we have 
\begin{align}
    &\text{Force balance}, \quad \widehat{T}_j  + \eta_j  \lJump{\widehat{v}_j \rJump}  = \Phi_j, \quad  j \in \{l, m, n \};
    \label{eq:Force_balance_hat}\\
    &\text{No opening}, \quad \lJump{\widehat{v}_n \rJump}  = 0;
    \label{eq:No_opening_hat}\\
    &\text{Friction law}, \quad   \widehat{T}_j =   \widehat{\alpha} \lJump{\widehat{v}_j \rJump} , \quad j \in \{l, m \}, \quad \widehat{\alpha}\left(\widehat{V}\right)=\widehat{\sigma}_n\frac{f\left(\widehat{V}, \psi\right)}{\widehat{V}}, \quad \widehat{V}  =  \left(\lJump{\widehat{v}_l \rJump}^2 + \lJump{\widehat{v}_m \rJump}^2 \right)^{1/2}.
     \label{eq:Friction_law_hat}
\end{align}

We need to solve \eqref{eq:Force_balance_hat}--\eqref{eq:Friction_law_hat} for the hat-functions  $\widehat{T}_j$ and $\lJump{\widehat{v}_j \rJump}$. In the normal direction $j=n$, we have closed form solutions 
$$
\lJump{\widehat{v}_n \rJump} = 0, \quad \widehat{T}_n = \Phi_n.
$$
In the tangential directions we have four coupled algebraic equations to solve
\begin{align}\label{eq:hat_radiation_line}
    \widehat{T}_j  + \eta_j  \lJump{\widehat{v}_j \rJump}  = \Phi_j, \quad 
    \widehat{T}_j =   \widehat{\alpha} \lJump{\widehat{v}_j \rJump} , \quad j \in \{ l, m \}.
\end{align}
Next, using 
\begin{align}\label{eq:hat_slip_velocity}
    \lJump{\widehat{v}_j \rJump} = \left(\eta_j +\widehat{\alpha}\right)^{-1}\Phi_j,\quad
    j \in \{ l, m \},
\end{align}
we obtain a nonlinear algebraic problem for the absolute slip-rate $\widehat{V}$
\begin{align}\label{normalisingslitrate}
         \sqrt{( \eta_l \widehat{V} + \widehat{\alpha} \widehat{V})^{-2} \Phi_{l}^2 + ( \eta_m \widehat{V} + \widehat{\alpha} \widehat{V})^{-2} \Phi_{m}^2 }   = 1.
\end{align}
 
 Once we compute the absolute slip-rate $\widehat{V}$ from \eqref{normalisingslitrate}, we can determine $\widehat{T}_j$ and $\lJump{\widehat{v}_j \rJump}$ from \eqref{eq:hat_radiation_line} and \eqref{eq:hat_slip_velocity}. Finally we have
 \begin{align}
      \widehat{T}_j ^{-} = \widehat{T}_j ^{+} = \widehat{T}_\eta , \quad
     \widehat{v}_j^+ = \frac{2p^{-}_j + \widehat{T}_j}{Z_j^{-}} + \lJump{\widehat{v}_j \rJump}, \quad
     \widehat{v}_j^- = \frac{2q^{+}_j - \widehat{T}_j}{Z_j^{+}} - \lJump{\widehat{v}_j \rJump}
 \end{align}
 We have equivalently redefined the interface condition
 \begin{align}\label{eq:transformed_interface_condition}
    {v}_j^{\pm} = \widehat{v}_j^{\pm}, \quad  {T}_j^{\pm} = \widehat{T}_j,
 \end{align}
 
By construction, the hat-variables $\widehat{v}_j ^{\pm}$, $\widehat{T}_j ^{\pm}$ satisfy the following algebraic identities:
\begin{align}\label{eq:identity_1}
{q}_j  \left(\widehat{v}_j ^{+}, \widehat{T}_j ^{+}, Z_j ^{+}\right)   = {q}_j  \left({v}_j ^{+}, {T}_j ^{+}, Z_j ^{+} \right), \quad
{p}_\eta  \left(\widehat{v}_j ^{-}, \widehat{T}_j ^{-}, Z_j ^{-}\right)  = {p}_j \left({v}_j ^{-}, {T}_j ^{-}, Z_j ^{-} \right), 
\end{align}
\begin{align}\label{eq:identity_2}
\left(q_j ^2  \left({v}_j ^{+}, {T}_j ^{+}, Z_j ^{+}\right)\right)-{p}_j ^2 \left(\widehat{v}_j ^{+},\widehat{T}_j ^{+}, Z_j ^{+}\right) = Z_j ^{+}\widehat{T}_j \widehat{v}_j^{+}, \quad
 p^2_j \left({v}_j ^{-}, {T}_j^{-}, Z_j ^{-} \right) -{q}^2_j \left(\widehat{v}_j^{-}, \widehat{T}_j^{-}, Z_j^{-}\right) = -Z_j ^{-}\widehat{T}_j \widehat{v}_j ^{-},
\end{align}
\begin{align}\label{eq:identity_3}
&\frac{1}{Z_j ^+}\left(q^2_j \left({v}_j ^{+}, {T}_j ^{+}, Z_j ^{+} \right)- {p}^2_j \left(\widehat{v}_j ^{+}, \widehat{T}_j ^{+},Z_j ^{+}\right)\right)  + \frac{1}{Z_j ^-}\left(p^2_j \left({v}_j ^{-}, {T}_j ^{-}, Z_j ^{-} \right) -{q}^2_j \left(\widehat{v}_j ^{-}, \widehat{T}_j ^{-}, Z_j ^{-}\right)\right) =  \widehat{T}_j  \lJump{\widehat{v}_j \rJump},
\end{align}
\begin{align}\label{eq:identity_4}
 \widehat{T}_n  \lJump{\widehat{v}_n \rJump} = 0, \quad \widehat{T}_j  \lJump{\widehat{v}_j \rJump} &= \frac{\widehat{\alpha}}{ \left(\eta_j +\widehat{\alpha}\right)^2}\Phi_j^2 \ge  0 ,
 \quad j \in \{ l, m \},\\
 \sum_{j \in \{l, m,n\} } \widehat{T}_j  \lJump{\widehat{v}_j \rJump} &= \sum_{j \in \{l, m\} }\frac{\widehat{\alpha}}{ \left(\eta_j +\widehat{\alpha}\right)^2}\Phi_j^2 = \widehat{\alpha}|\widehat{V}|^2  \ge 0.
 \label{eq:identity_5}
\end{align}

We note that  hat-variables $(\widehat{\mathbf{v}}^{\pm}, \widehat{T})$  involve the computation of $\widehat{V}$, which is the solution of the  nonlinear algebraic problem \eqref{normalisingslitrate} for nonlinear friction laws. 
It is, however, not obvious if there is a solution for \eqref{normalisingslitrate}, and if the solution is unique in general media with $0<\eta_m \ne \eta_l >0$.
As we will show below, under the assumptions given for the nonlinear friction coefficient $f(V, \psi)$, we can prove that the nonlinear algebraic problem \eqref{normalisingslitrate} has a unique solution in general anisotropic elastic media.

Consider the derived quantities $(\mathbf{v}^{\pm}, T^{\pm})$ defined on the fault interface $\Gamma_{F}$ associated to a solution $(\mathbf{v}^{\pm}, \boldsymbol{\sigma}^{\pm})$ that solves the IVP without the specified interface conditions.
As we have not yet specified the interface conditions for $(\mathbf{v}^{\pm}, T^{\pm})$, the IBVP across the two blocks is under-determined and multiple solutions $(\mathbf{v}^{\pm}, \sigma^{\pm})$ could exist.
We will formulate a projection operator $P$ that maps $(\mathbf{v}^{\pm}, T^{\pm})$ to a (hat-) quantity $(\widehat{\mathbf{v}}^{\pm}, \widehat{T})$ which are uniquely determined.
The hat-quantities additionally satisfy the interface conditions given in \eqref{eq:hat_radiation_line}--\eqref{eq:hat_slip_velocity} of the rupturing fault. 
We now make the discussion more formal.
Let $(v^{\pm},\sigma^{\pm})$ be a solution on $\overline{\Omega}$ of \eqref{eq:transformedEQ} corresponding to a derived quantity $(\mathbf{v}^{\pm}, T^{\pm})$. 
In the following discussion, assume that all solutions $(v^{\pm},\sigma^{\pm})$ obey the linear elastic wave equation \eqref{eq:transformedEQ}.
Define 
$$U \coloneqq \{ (\mathbf{v}^{\pm}, T^{\pm}) \ | \ (v^{\pm},\sigma^{\pm}) \quad \text{satisfies the Cauchy problem \eqref{eq:transformedEQ} without the interface conditions \eqref{eq:hat_radiation_line}--\eqref{eq:hat_slip_velocity}} \},$$
and
$$\widehat{U} \coloneqq U \cap \{ (\widehat{\mathbf{v}}^{\pm}, \widehat{T}) \ | \ (\widehat{\mathbf{v}}^{\pm}, \widehat{T}) \text{ satisfies  \eqref{eq:hat_radiation_line}--\eqref{eq:hat_slip_velocity}} \},$$
 then $\widehat{U} \subset U$.
We require $P : U \mapsto \widehat{U}$.
Define the projection operator $P$ through 
\begin{align}\label{eq: projection}
    P( (\mathbf{v}^{\pm}, T^{\pm}) ) = (\widehat{\mathbf{v}}^{\pm}, \widehat{T})
\end{align}
where $(\widehat{\mathbf{v}}^{\pm}, \widehat{T})$ is obtained by solving \eqref{normalisingslitrate} for $\widehat{V}$ and following the procedure described above. 
Under reasonable assumptions of the interface and boundary conditions, like those given above, we can prove well-posedness of this projection operator. 
\begin{theorem}\label{thm:uniq_and_existence}
    Let $P : U \mapsto \widehat{U}$ be as described in \eqref{normalisingslitrate} and assume the interface conditions previously given. 
    Assume that the friction co-efficient function $f$ obeys $f(V, \psi) \ge 0$, $f\left(0,\psi\right) = 0$ and $\partial f\left(V,\psi\right)/\partial V > 0$.
    Then the operator $P$ is continuous, $U \ne \emptyset$, and $|\widehat{U}| = 1$. 
\end{theorem}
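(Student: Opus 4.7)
I would decompose the theorem into three parts: (i) $U \ne \emptyset$, (ii) the image $P(u)$ is uniquely determined for every $u \in U$ (the intended meaning of $|\widehat{U}| = 1$), and (iii) $P$ is continuous. Part (i) is immediate, since the trivial field $(\mathbf{v}^\pm, \boldsymbol{\sigma}^\pm) \equiv 0$ solves the linear Cauchy problem \eqref{eq:transformedEQ} without the interface conditions, so its trace lies in $U$.

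The analytical heart is (ii), which I would reduce to solving the scalar equation \eqref{normalisingslitrate}. Fixing $u = (\mathbf{v}^\pm, T^\pm) \in U$, I form the outgoing characteristic data $q_j^+, p_j^-$ and then $\Phi_j, \eta_j$ via \eqref{eq:charac_2}--\eqref{eq:radiation_damping_line}; the normal component is closed form with $[\![\widehat{v}_n]\!] = 0$ and $\widehat{T}_n = \Phi_n$. For the tangential problem, using the identity $\widehat{\alpha}\widehat{V} = \widehat{\sigma}_n f(\widehat{V}, \psi)$, I rewrite \eqref{normalisingslitrate} as $G(\widehat{V}) = 1$ with
\[
G(\widehat{V}) := \sqrt{\frac{\Phi_l^2}{\bigl(\eta_l \widehat{V} + \widehat{\sigma}_n f(\widehat{V},\psi)\bigr)^2} + \frac{\Phi_m^2}{\bigl(\eta_m \widehat{V} + \widehat{\sigma}_n f(\widehat{V},\psi)\bigr)^2}}.
\]
If $\Phi_l = \Phi_m = 0$, then \eqref{eq:hat_slip_velocity} forces $[\![\widehat{v}_l]\!] = [\![\widehat{v}_m]\!] = 0$ and $\widehat{T}_l = \widehat{T}_m = 0$ uniquely, so $\widehat{V} = 0$. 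Otherwise the hypothesis $f(0,\psi) = 0$ yields $G(\widehat{V}) \to +\infty$ as $\widehat{V} \to 0^+$, while $\eta_l, \eta_m > 0$ yield $G(\widehat{V}) \to 0$ as $\widehat{V} \to \infty$; and the hypothesis $\partial_V f(V,\psi) > 0$ together with $\eta_j > 0$ shows each denominator is strictly increasing in $\widehat{V}$, so $G$ is strictly decreasing and continuous on $(0,\infty)$. The intermediate value theorem then delivers a unique $\widehat{V} > 0$ solving $G(\widehat{V}) = 1$. The remaining hat-quantities $\widehat{T}_l, \widehat{T}_m, [\![\widehat{v}_l]\!], [\![\widehat{v}_m]\!]$ follow in closed form from \eqref{eq:hat_radiation_line}--\eqref{eq:hat_slip_velocity}, and $\widehat{v}_j^\pm$ from the characteristic identities \eqref{eq:hat_variables}; each of these algebraic steps is single-valued, so the output of $P$ is uniquely determined.

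For (iii), strict monotonicity of $G$ gives $\partial_{\widehat{V}} G \ne 0$ at the root, so the implicit function theorem makes $\widehat{V}$ depend continuously on $(\Phi_l, \Phi_m, \widehat{\sigma}_n, \psi)$; composing with the explicit continuous formulas that recover the remaining hat-variables shows $P$ is continuous on $U$. The main obstacle I anticipate is a clean treatment of the anisotropic case $\eta_l \ne \eta_m$, where the two fractions inside the square root cannot be factored out and one must verify monotonicity and the limiting behaviour term by term; in particular it is the assumption $f(0,\psi) = 0$ (not merely $f \ge 0$) that is precisely responsible for the $G \to +\infty$ limit at the origin, and hence for robust existence as $\Phi_l^2 + \Phi_m^2 \to 0^+$.
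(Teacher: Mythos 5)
Your proof follows the same route as the paper's: reduce to the scalar equation \eqref{normalisingslitrate}, observe that the left-hand side (your $G$, the paper's $\Theta$ in \eqref{eq:function_theta}) is continuous, strictly decreasing, and has limits $+\infty$ as $\widehat{V}\to 0^+$ and $0$ as $\widehat{V}\to\infty$, and conclude that the level set $\Theta^{-1}(1)$ is a singleton. Your version is actually tighter in three places where the paper's proof is thin: you spell out which hypothesis drives each limit and the strict monotonicity (the paper merely asserts them), you handle the degenerate case $\Phi_l = \Phi_m = 0$ in which $\Theta\equiv 0$ and the paper's level-set argument would return nothing (so $\widehat{V}=0$ must be taken directly from \eqref{eq:hat_slip_velocity}), and you justify continuity of $P$ via the implicit function theorem at the root, whereas the paper's closing line --- that $P$ is continuous because it ``projects to a unique element'' --- is a non sequitur, since well-definedness alone does not give continuity.
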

\begin{proof}
    { The set of functions $U$ is non-empty as this contains the case of isotropic linear elasticity, with $\eta_l = \eta_m = \eta >0$, studied in \cite{DuruandDunham2016}.}
    Now consider the continuous function $\Theta : (0,\infty) \mapsto (0,\infty)$ given through 
    \begin{align}\label{eq:function_theta}
        \Theta(\theta) \coloneqq \sqrt{( \eta_l \theta + \widehat{\sigma}_n f (\theta, \psi ))^{-2} \Phi_{l}^2 + ( \eta_m \theta + \widehat{\sigma}_n f (\theta, \psi ))^{-2} \Phi_{m}^2 }.
    \end{align}
    The function $\Theta$ obeys: $\lim_{\theta \to 0^+} \Theta (\theta) = \infty$; $\lim_{\theta \to \infty} \Theta (\theta) = 0$; and is (strictly) monotonically decreasing. 
    \begin{figure}[h!]
        \centering
        \includegraphics[width = 0.8\textwidth]{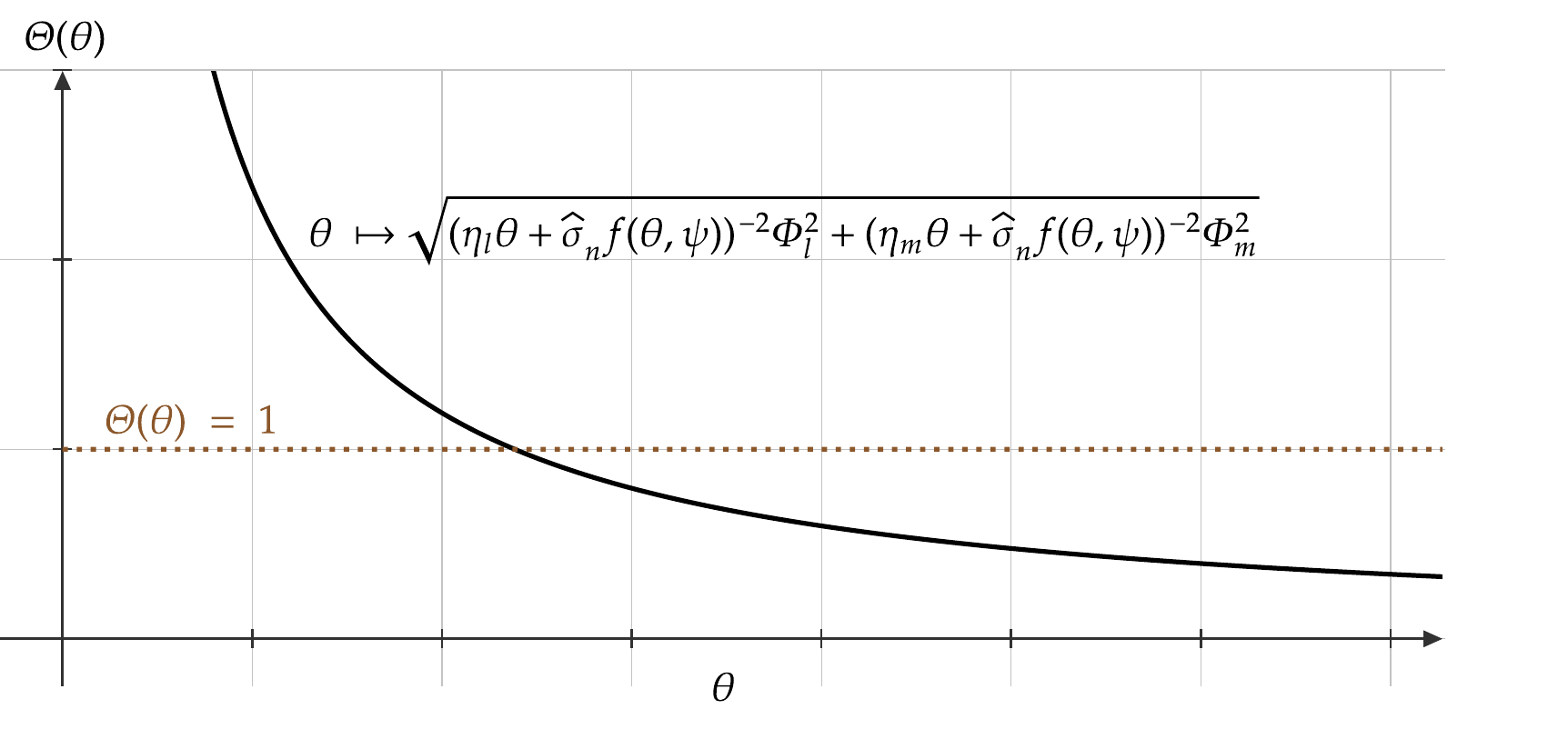}
        \caption{A graphical description of the function $\theta \to \Theta (\theta)$ given in \eqref{eq:function_theta}  and the constant function $\Theta (\theta) = 1$ as defined by \eqref{normalisingslitrate}. The point  of intersection of the functions, $(\theta^*, 1)$ for $\Theta (\theta^*) = 1$,   determines the  unique solution $\widehat{V} = \theta^*$ for the algebraic problem  \eqref{normalisingslitrate}. }
        \label{fig:friction}
    \end{figure}
    Therefore the level set $\{ \theta^* \ | \ \Theta(\theta^*) = 1 \}$ contains precisely one element, call this $\widehat{V}$. Equations \eqref{eq:hat_radiation_line}--\eqref{eq:hat_slip_velocity} are completely determined by $\widehat{V}$, showing $|\widehat{U}| = 1$. Finally, as $P$ projects to a unique element, it is a continuous operator.
    
\end{proof}
\begin{remark}
    Theorem \ref{thm:uniq_and_existence} proves existence and uniqueness of the hat-variables for nonlinear friction laws in general anisotropic elastic solids, as conjectured in \cite{DuruandDunham2016}.
\end{remark}

We conclude this section by reformulating Theorem \ref{theo:boundedness} in terms of the hat-variables.
\begin{theorem}  \label{theo:boundedness_hat}
Let $\mathbf{E}^{\pm}$ be given in \eqref{eq:energy_cts} and $(q,r,s)$ be the transformed curvilinear coordinates. 
Define $\mathbf{E}(t) \coloneqq \mathbf{E}^+(t) + \mathbf{E}^- (t)$, then 
\begin{align}\label{eq:boundedness_hat}
    \frac{d}{dt}\mathbf{E}(t) = \mathrm{IT}_{s}=
    -\int_{\widetilde{\Gamma}_F} \widehat{\alpha}|\widehat{V}|^2 J|\mathbf{q_\xi}| dr \, ds\le 0. 
\end{align} 
\end{theorem}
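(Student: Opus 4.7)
The plan is to reuse the general interface energy identity from Theorem~\ref{Theo:energy_estimate_BC} and translate its right-hand side into the hat-variable framework using the reformulated interface condition \eqref{eq:transformed_interface_condition} together with the algebraic identities \eqref{eq:identity_1}--\eqref{eq:identity_5} derived earlier in this section. In spirit the statement is just Theorem~\ref{theo:boundedness} rewritten in the projected variables, so the argument is essentially a bookkeeping exercise once the hat-variables have been shown to exist uniquely (Theorem~\ref{thm:uniq_and_existence}).

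First, I would start from \eqref{eq:energy_estimate_bc}, which holds for any admissible traces $(\mathbf{v}^\pm, \mathbf{T}^\pm)$ on $\widetilde{\Gamma}_F$ coming from solutions of \eqref{eq:transformedEQ}, regardless of how the interface is closed. I would then apply the equivalent interface condition \eqref{eq:transformed_interface_condition} to substitute $\mathbf{v}^\pm \to \widehat{\mathbf{v}}^\pm$ and $\mathbf{T}^\pm \to \widehat{\mathbf{T}}^\pm$ in the integrand. Next, I rotate the Cartesian velocity--traction inner product into the local orthonormal fault basis $\{\bm{n},\bm{m},\bm{l}\}$ using the rotation matrix $\mathbf{R}$; because $\mathbf{R}^{-1}=\mathbf{R}^T$ preserves inner products, the integrand reduces to $\sum_{j\in\{n,m,l\}} \bigl(\widehat{T}_j^+\widehat{v}_j^+ - \widehat{T}_j^-\widehat{v}_j^-\bigr)$.

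I would then apply the force balance condition \eqref{eq:Force_balance_hat}, $\widehat{T}_j^- = \widehat{T}_j^+ = \widehat{T}_j$, to factor out the common traction and collapse the integrand to $\sum_{j\in\{n,m,l\}} \widehat{T}_j \lJump{\widehat{v}_j\rJump}$. Finally, identity \eqref{eq:identity_5} (whose derivation already absorbs the no-opening condition \eqref{eq:No_opening_hat} and the friction law \eqref{eq:Friction_law_hat}) immediately replaces this sum by $\widehat{\alpha}\,|\widehat{V}|^2 \ge 0$, yielding exactly the claimed identity \eqref{eq:boundedness_hat} and the sign statement $\tfrac{d}{dt}\mathbf{E}(t)\le 0$.

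There is no real obstacle: all the analytical difficulty was absorbed into Theorem~\ref{thm:uniq_and_existence}, which guarantees that the hat-variables are well defined; nonnegativity of $\widehat{\alpha}$ follows directly from the assumption $f(V,\psi)\ge 0$ on the friction coefficient. The only care needed is to check that the rotation and substitution preserve the bilinear structure of the interface integrand, which is automatic from the orthogonality of $\mathbf{R}$ and the fact that \eqref{eq:transformed_interface_condition} equates the full Cartesian traces rather than only their rotated components.
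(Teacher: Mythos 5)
Your proposal is correct and follows essentially the same route as the paper: start from the interface term in \eqref{eq:energy_estimate_bc}, substitute the transformed conditions \eqref{eq:transformed_interface_condition}, pass to the rotated fault-local components, invoke force balance and no-opening, and close with the identities \eqref{eq:identity_4}--\eqref{eq:identity_5}. The only difference is that you make the rotation step (orthogonality of $\mathbf{R}$) and the force-balance substitution explicit, whereas the paper moves directly from the Cartesian integrand to $\sum_{j}\widehat{T}_j\lJump{\widehat{v}_j\rJump}$, leaving those two steps implicit.
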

\begin{proof}
The proof is similar the  proof of Theorem \ref{theo:boundedness}.
Consider the interface term in \eqref{eq:energy_estimate_bc} and impose the transformed interface conditions \eqref{eq:transformed_interface_condition}, that is 
$$
    {v}_j^{\pm} = \widehat{v}_j^{\pm}, \quad  {T}_j^{\pm} = \widehat{T}_j, 
$$
 we have 
 \begin{align}\label{eq:IT_term_0}
 \mathrm{IT}_{s}=-\int_{\widetilde{\Gamma}_{F}} \left(\widehat{T}_l\lJump{\widehat{v}_l \rJump} + \widehat{T}_m\lJump{\widehat{v}_m \rJump} + \widehat{T}_n\lJump{\widehat{v}_n \rJump}\right)|\mathbf{q}_\xi|Jdrds.
 \end{align}
 The no opening condition $\lJump{\widehat{v}_n \rJump}  = 0$ and  the identities \eqref{eq:identity_4}--\eqref{eq:identity_5} yield the result
 \begin{align}
 \mathrm{IT}_{s}=-\int_{\widetilde{\Gamma}_F} \left(\sum_{j \in \{l, m\} }\frac{\widehat{\alpha}}{ \left(\eta_j +\widehat{\alpha}\right)^2}\Phi_j^2\right) J|\mathbf{q_\xi}| dr \, ds =-\int_{\widetilde{\Gamma}_F} \widehat{\alpha}|\widehat{V}|^2 J|\mathbf{q_\xi}| dr \, ds\le 0.
\end{align}
\end{proof}

\section{DP SBP numerical approximation}
In this section, we derive high order accurate and stable discrete numerical approximation of the  elastic wave equation \eqref{eq:transformedEQ} in curvilinear coordinates subject to the nonlinear friction laws \eqref{eq:Force_balance}--\eqref{eq:Friction_law} on the fault.
We will discretise the spatial derivatives using SBP finite difference operators formulated in the DP framework \cite{Mattsson2017,DURU2022110966,williams2021provably}.
In particular, the standard DP upwind SBP operators \cite{Mattsson2017}  and the recently derived $\alpha$-DRP SBP operators \cite{williams2021provably} both satisfy the DP SBP property and will be used. 
The DP SBP operators come in pairs, the forward difference operator $D_{+}$ and the backward difference operator $D_{-}$. 
As in \cite{DURU2022110966} we will carefully combine the DP SBP operators to preserve, in the discrete setting, the skew-symmetric property given in Lemma \ref{lem:anti_symmetry}.
Our primary objective is to numerically implement the nonlinear friction laws \eqref{eq:Force_balance}--\eqref{eq:Friction_law} using penalties such that we can prove energy-stability similar to the continuous analogue Theorem \ref{theo:boundedness_hat}.

\subsection{DP and upwind SBP operators for approximating spatial derivative}
In \cite{DURU2022110966} the term DP SBP operator was introduced. For our discussions to be self-contained we will also formally define the DP SBP framework. Firstly, discretise the reference computational cube $\Omegat =  [0,1]^3$ with an evenly spaced mesh across each axis, $\xi = q, r, s$. 
    We will treat the left and right unit cubes independently in the discretisation, and use penalties, with the friction law \eqref{eq:Force_balance}--\eqref{eq:Friction_law}, to glue them together.
   %
    For each $\xi \in \{q,r,s\}$, consider the uniform discretisation of the unit interval $\xi \in [0, 1]$ 
    \begin{align}
        \xi_{i} \coloneqq i/n_{\xi}, && i \in \{0,\dots, n_{\xi} \}, 
    \end{align}
    where $n_{\xi} + 1$ is the number of grid-points on the $\xi$-axis. 
    
    We will use DP SBP operators \cite{Mattsson2017,DURU2022110966,williams2021provably,DovgilovichSofronov2015} to approximate the spatial derivatives, $\partial/{\partial\xi}$. 
	For each $\xi \in \{q,r,s\}$ define $H_{\xi} \coloneqq \text{diag}\left(h_0^{(\xi)} , \dots, h_{n_{\xi}}^{(\xi)}\right)$, with $h_j^{(\xi)} >0$ for all $j \in \{ 0, 1, \dots, n_\xi \}$.
	We consider the DP derivative operators $D_{+\xi},D_{-\xi} : \R^{n_{\xi} +1 } \mapsto \R^{n_{\xi} +1 }$ so that the SBP property holds
	\begin{align}\label{eq:upw_SBP}
        (D_{+\xi} \bm{f} )^T H_{\xi} \bm{g} +  \bm{f}^T H_{\xi} (D_{-\xi }\bm{g}) = f(\xi_{n_{\xi}})g(\xi_{n_{\xi}}) - f(\xi_0)g(\xi_0),
    \end{align}
    where  $\bm{f} = (f(\xi_0), \dots , f(\xi_{n_{\xi}}) )^T$, $\bm{g} = (g(\xi_0), \dots , g(\xi_{n_{\xi}}) )^T$ are vectors sampled from weakly differentiable functions of the $\xi$ variable. 
 %
    Introduce the matrices
    \begin{align*}
     B = e_ne_n^T - e_1 e_1^T, &&
     e_1 \coloneqq (1, 0,\dots , 0)^T, &&
      e_n \coloneqq (0, \dots , 0, 1 )^T,
\end{align*}
\begin{align}\label{eq:upwind}
    Q_{+\xi} = H_{\xi}D_{+\xi} - \frac{1}{2}B, \quad Q_{-\xi} = H_{\xi}D_{-\xi} - \frac{1}{2}B.
\end{align}
\begin{definition}\label{def:upwind}
		Let $D_{-\xi}$, $D_{+\xi}$, $Q_{-\xi}$, $Q_{+\xi} : \R^{n_\xi} \mapsto \R^{n_\xi}$ be linear operators that solve Equations \eqref{eq:upw_SBP} and \eqref{eq:upwind} for a diagonal weight matrix $H_\xi \in \R^{n_\xi \times n_\xi}$. 
		If the matrix $S_{+} \coloneqq {Q_{+\xi} + Q_{+\xi}^T}$ is negative semi-definite and $S_{-} \coloneqq {Q_{-\xi} + Q_{-\xi}^T}$ is  positive semi-definite, then the 3-tuple $(H_\xi,D_{-\xi},D_{+\xi})$ is called an upwind diagonal-norm DP SBP operator.  
	\end{definition}
%
    We call $(H_{\xi},D_{-\xi},D_{+\xi})$ an upwind diagonal-norm DP SBP operator of order $m$ if the accuracy conditions
	\begin{align} \label{eq:acc}
		    D_{\eta\xi}( \bm{\xi}^i) = i \bm{\xi}^{i-1}
	\end{align}
	are satisfied for all $i \in \{0,\dots,m\}$ and $\eta \in \{-,+\}$ where $\bm{\xi}^i \coloneqq (\xi_0^i, \dots, \xi_{n_\xi}^i)^T$.

	The 1D SBP operators can be extended to higher space dimensions using tensor products $\otimes$. Let $f(q,r,s)$ denote a 3D scalar function, and $f_{ijk} \coloneqq {f}(q_i,r_j, s_k)$ denote the corresponding 3D grid function.
	The  3D scalar grid function $f_{ijk}$ is rearranged row-wise as a vector $\bm{f}$ of length $n_qn_rn_s$. For $\xi \in \{q,r,s\}$ and $\eta \in \{-,+\}$ define:
    %
     {\small
    \begin{align*}
\centering
&\bm{D}_{\eta q} = \left( {D}_{\eta q} \otimes I_{n_r} \otimes I_{n_s}\right), \quad \bm{D}_{\eta r} = \left(I_{n_q} \otimes  {D}_{\eta r}  \otimes I_{n_s}\right), \quad \bm{D}_{\eta s} = \left(I_{n_q} \otimes  I_{n_r}  \otimes  {D}_{\eta s}\right),\quad
\bm{H} = \left( H_{q} \otimes H_r \otimes H_{s}\right), 
\end{align*}
    where $I_{n_{\xi}}$ is the identity matrix of size $n_{\xi} \times n_{\xi}$. 
    }
    The matrix operator $\bm{D}_{\eta\xi}$  will approximate the partial derivative operator in the $\xi$ direction.
    A discrete inner product on $\R^{n_{q} + 1} \times \R^{n_{r} + 1} \times \R^{n_{s} + 1}$ is induced by $\bm{H}$ through 
     \begin{align}
        \l \bm{g} , \bm{f} \r_{\bm{H}} \coloneqq \bm{g}^T \bm{H} \bm{f} = \sum_{i=0}^{n_q} \sum_{j=0}^{n_r} \sum_{k=0}^{n_s}f_{ijk}g_{ijk} h_i^{(q)} h_j^{(r)} h_k^{(s)}, \end{align}
        and  the  discrete energy-norm  for the elastic wave equation is given by
     \begin{align}\label{eq:physical_energy_discrete}
         \|\mathbf{Q}\left(\cdot, \cdot, \cdot, t\right)\|_{HP}^2 = \l\mathbf{Q}, \frac{1}{2} \mathbf{P}^{-1} \mathbf{Q}\r_{\bm{H}}. 
     \end{align}

    \subsection{Numerical approximation in space}
    We consider the transformed elastic wave equation \eqref{eq:transformedEQ} in a reference computational cube $(q, r, s) \in [0,1]^3$, and approximate the spatial operators using the upwind SBP operators. We introduce compact notations by rearranging every 3D scalar grid function  row-wise as a vector of length $n_qn_rn_s$ and the unknown vector field $\mathbf{Q}$ is a vector of length $9 n_qn_rn_s$. As in \cite{DURU2022110966,williams2021provably}, the backward difference operator $D_{-\xi}$ is used to approximate the spatial derivative for the conservative flux term, and the forward difference operator $D_{+\xi}$ approximates derivatives in the non-conservative product term. For  $\mathbf{Q} \in \{\mathbf{Q}^{-}, \mathbf{Q}^{+}\}$, we have
    %
    \begin{align}\label{eq:gen_hyp_transformed_discrete}
    \widetilde{\mathbf{P}}^{-1} \frac{d }{d t} {\mathbf{Q}} = \grad_{D_{-}}\bullet {\mathbf{F} \left({\mathbf{Q}} \right)} + \sum_{\xi \in \{q, r, s\} }\mathbf{B}_\xi\left(\grad_{D_{+}}{\mathbf{Q}}\right),
    \end{align}
where the discrete operator  $\grad_{D_{\eta}} = \left(\mathbf{D}_{\eta q}, \mathbf{D}_{\eta r}, \mathbf{D}_{\eta s}\right)^T$, with $\eta \in \{+, -\}$, is analogous to the continuous gradient operator $\grad = \left(\partial/\partial q, \partial/\partial r, \partial/\partial s\right)^T$. 
Note that the numerical solutions $\{\mathbf{Q}^{-}, \mathbf{Q}^{+}\}$ in the two blocks are unconnected, and independent of each other.
Numerical implementation of  the nonlinear interface conditions, discussed in the next subsection, will be used to coupled them together. 

We state the discrete equivalence of Lemma \ref{lem:anti_symmetry} which was proven in \cite{DURU2022110966}.
 \begin{lemma}[Lemma 4.2 in \cite{DURU2022110966}]\label{lem:anti_sym_prop_disc}
        Consider the semi-discrete approximation given in \eqref{eq:gen_hyp_transformed_discrete}. We have the discrete skew-symmetric form
       {\small
        \begin{align*}
         \left(\left(\left(I_9\otimes\bm{D}_{+\xi}\right)\bm{Q}\right)^T \bm{F}_{\xi}\left(\bm{Q}\right)
            - \bm{Q}^T\mathbf{B}_\xi\left(\grad_{D_{+}}{\bm{Q}}\right)\right)= 0.
        \end{align*}
        }
    \end{lemma}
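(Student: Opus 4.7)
The plan is to mimic the continuous proof of Lemma \ref{lem:anti_symmetry} but realize that, because the identity there is purely algebraic (no integration, no integration-by-parts), the analogous discrete identity should follow pointwise at each grid node once we recognize that the only differentiated quantities in both $\mathbf{F}_\xi$ and $\mathbf{B}_\xi$ are, respectively, the stress components and the velocity components, and these never get differentiated by a spatial operator inside $\mathbf{F}_\xi$. So the cancellation in the continuous case is a pointwise algebraic statement about two trilinear forms in $(\mathbf{v}, \boldsymbol{\sigma}, \partial_\xi \mathbf{v})$, and replacing $\partial_\xi \mathbf{v}$ by $\mathbf{D}_{+\xi}\mathbf{v}$ at each node preserves the structure.

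First I would inspect the block structure of $\mathbf{Q} = (\mathbf{v},\boldsymbol{\sigma})^T$, noting that $\mathbf{F}_\xi(\mathbf{Q})$ is zero in the velocity slots and linear in $\boldsymbol{\sigma}$ in the stress slots, while $\mathbf{B}_\xi(\nabla\mathbf{Q})$ is zero in the stress slots and linear in $\partial_\xi \mathbf{v}$ in the velocity slots. Writing out both bilinear forms componentwise,
\begin{align*}
\mathbf{Q}^T \mathbf{B}_\xi(\nabla\mathbf{Q}) &= J\bigl(\xi_x \sigma_{xx} \partial_\xi v_x + \xi_y \sigma_{yy} \partial_\xi v_y + \xi_z \sigma_{zz} \partial_\xi v_z \\
&\quad + \sigma_{xy}(\xi_y \partial_\xi v_x + \xi_x \partial_\xi v_y) + \sigma_{xz}(\xi_z \partial_\xi v_x + \xi_x \partial_\xi v_z) + \sigma_{yz}(\xi_z \partial_\xi v_y + \xi_y \partial_\xi v_z)\bigr),
\end{align*}
and similarly for $(\partial_\xi \mathbf{Q})^T \mathbf{F}_\xi(\mathbf{Q})$, I would line up the terms and see them match exactly, using only the symmetry of the stress tensor and reordering of scalar products. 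This is the core algebraic identity underlying Lemma \ref{lem:anti_symmetry}.

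Next, I would replace $\partial_\xi$ by the one-dimensional operator $D_{+\xi}$ acting componentwise, and observe that $(I_9 \otimes \mathbf{D}_{+\xi})\mathbf{Q}$ simply holds, at each grid node, the numerical values of $\mathbf{D}_{+\xi}\mathbf{v}$ (in the velocity slots) and $\mathbf{D}_{+\xi}\boldsymbol{\sigma}$ (in the stress slots). Since $\mathbf{F}_\xi$ only consumes the stress entries of its argument, and since on the $\mathbf{B}_\xi$ side $\nabla_{D_+}\mathbf{Q}$ delivers exactly the same $\mathbf{D}_{+\xi}\mathbf{v}$ used on the $\mathbf{F}_\xi$ side, the same algebraic cancellation applies verbatim pointwise. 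Multiplying by the diagonal node weights from $\bm{H}$ and summing (if one wants the integrated form) does not affect the identity.

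The main subtlety, and the step I would be most careful about, is precisely the consistency requirement that the \emph{same} upwind operator $D_{+\xi}$ is used on both sides of the identity; the pairing of $D_{-\xi}$ on the conservative flux with $D_{+\xi}$ on the non-conservative product in \eqref{eq:gen_hyp_transformed_discrete} was designed in \cite{DURU2022110966} precisely so that the SBP property \eqref{eq:upw_SBP} can later be invoked without breaking this cancellation. Once this bookkeeping is in place, the identity reduces to the elementary pointwise manipulation above, and no properties of $\mathbf{D}_{+\xi}$ beyond linearity are needed here.
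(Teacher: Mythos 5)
Your proof is correct, and it is essentially the standard argument: the continuous Lemma \ref{lem:anti_symmetry} is a purely \emph{pointwise algebraic} cancellation between two trilinear expressions in $(\boldsymbol{\sigma}, \partial_\xi\mathbf{v})$, with $\bm{F}_\xi(\bm{Q})$ supported only in the velocity slots and depending only on stress, and $\bm{B}_\xi(\grad\bm{Q})$ supported only in the stress slots and depending only on $\partial_\xi\mathbf{v}$. Since no calculus property of $\partial_\xi$ is used --- only linearity and the symmetry of the stress tensor --- substituting $D_{+\xi}$ for $\partial_\xi$ node-by-node leaves the cancellation intact, and the identity then holds at every grid point (and hence in any weighted sum). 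You also correctly flag the one genuine subtlety: the \emph{same} operator $D_{+\xi}$ must appear in both the $(I_9\otimes\bm{D}_{+\xi})\bm{Q}$ factor and in $\grad_{D_+}\bm{Q}$, which is precisely why the SBP step in Theorem \ref{theo_sbp_sem_discrete} is applied to the $D_{-\xi}$ conservative term first, before this lemma is invoked.

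A small note for context: the paper you are working from does not itself prove this lemma --- it cites Lemma 4.2 of the companion paper --- so there is no in-paper proof to compare against line by line. But the argument you give is the natural (and essentially unique) one, and it is correct as written. If you wanted to tighten it slightly, you could make explicit that the metric coefficients $J,\xi_x,\xi_y,\xi_z$ appear as diagonal (nodal) matrices, so they factor through the pointwise cancellation without issue.
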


    For a 2D scalar field $f_{jk} = f(y_j, z_k)$ we also introduce the surface cubature
\begin{align}\label{eq:surface_cubature_1D}
    &\mathbb{I}_{q}\left(\mathbf{f}\right) = \sum_{j=0}^{n_r} \sum_{k=0}^{n_s} J_{jk}\sqrt{q_{xjk}^2+ q_{yjk}^2 + q_{zjk}^2} f_{jk}  h_j^{(r)} h_k^{(s)},
	\end{align}
%
    and 
	\begin{align}\label{eq:disc_boundary_term}
	    \mathbb{I}\mathbb{T}\left((\bm{v}^+)^T\bm{T}^+-(\bm{v}^-)^T\bm{T}^-\right) = \sum_{j=0}^{n_r} \sum_{k=0}^{n_s} J_{jk}\sqrt{q_{xjk}^2+ q_{yjk}^2 + q_{zjk}^2} \left((\bm{v}_{jk}^+)^T\bm{T}_{jk}^+
	    -(\bm{v}_{jk}^-)^T\bm{T}_{jk}^-\right) h_j^{(r)} h_k^{(s)}.
	\end{align}
	The interface surface term $\mathbb{I}\mathbb{T}_s\left((\bm{v}^+)^T\bm{T}^+-(\bm{v}^-)^T\bm{T}^-\right)$ is a numerical approximation of the continuous analogue $\mathrm{IT}_{s}$ defined in \eqref{eq:energy_estimate_bc}.
    \begin{theorem}\label{theo_sbp_sem_discrete}
    Consider the semi-discrete approximation \eqref{eq:gen_hyp_transformed_discrete} of the elastic wave equation.  
    We have
    \begin{align*}
        \frac{d}{dt} \left(\|\mathbf{Q}^-\left(\cdot, \cdot, \cdot, t\right)\|_{HP}^2 + \|\mathbf{Q}^+\left(\cdot, \cdot, \cdot, t\right)\|_{HP}^2\right) =  -\mathbb{I}\mathbb{T}\left((\bm{v}^+)^T\bm{T}^+-(\bm{v}^-)^T\bm{T}^-\right),
    \end{align*}
    where $\mathbb{I}\mathbb{T}_s\left((\bm{v}^+)^T\bm{T}^+-(\bm{v}^-)^T\bm{T}^-\right)$ is the surface term defined in \eqref{eq:disc_boundary_term}.
    \begin{proof}
   For $\mathbf{Q} \in \{\mathbf{Q}^{-}, \mathbf{Q}^{+}\}$ consider
    {\small
    \begin{align}
        \frac{d}{dt} \|\mathbf{Q}\left(\cdot, \cdot, \cdot, t\right)\|_{HP}^2 = \l \bm{Q} , P^{-1} \pf{}{t}  \bm{Q} \r_{H}  = \l \bm{Q}, \grad_{D_{-}}\bullet {\mathbf{F} \left({\mathbf{Q}} \right)} + \sum_{\xi \in \{ q, r, s \} }\mathbf{B}_\xi\left(\grad_{D_{+}}{\mathbf{Q}}\right) \r_{H}.
    \end{align}
    }
    Expanding the right hand side, applying the SBP property \eqref{eq:upw_SBP} and ignoring contributions from the boundaries in the $r$- and $s$-directions gives
    \begin{align*}
       &\sum_{\xi \in \{q,r,s\}} \left(\l \bm{Q},  \left(I_9\otimes\bm{D}_{-\xi}\right)\bm{F}_{\xi} \left(\bm{Q}) \r_{H}  + \l \bm{Q},  \mathbf{B}_\xi\left(\grad_{D_{+}}{\mathbf{Q}}\right)\right) \r_{H}\right) \\
       &=  \mathbb{I}_q\left(\bm{v}^T\bm{T}\right)\Big|_{q = 1} - \mathbb{I}_q\left(\bm{v}^T\bm{T}\right)\Big|_{q = 0} + \sum_{\xi \in \{q,r,s\}} \left( \l\bm{Q},  \bm{B}_{\xi} (\grad_{D_{+}} \bm{Q} ) \r_{H}- \l \left(I_9\otimes\bm{D}_{+\xi}\right) \bm{Q}, \bm{F}_{\xi} (\bm{Q}) \r_{H}\right) ,
    \end{align*}
    which from Lemma \ref{lem:anti_sym_prop_disc} gives,
     {\small
    \begin{align}
        \frac{d}{dt} \|\mathbf{Q}\left(\cdot, \cdot, \cdot, t\right)\|_{HP}^2 = \mathbb{I}_q\left(\bm{v}^T\bm{T}\right)\Big|_{q = 1} - \mathbb{I}_q\left(\bm{v}^T\bm{T}\right)\Big|_{q = 0}.
    \end{align}
    }
    Collecting contribution from both sides of the interface and considering boundary terms only at the interface gives the result.
    \end{proof}
\end{theorem}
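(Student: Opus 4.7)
The plan is to mimic the continuous proof of Theorem \ref{Theo:energy_estimate_BC}, replacing integration by parts with the discrete SBP property \eqref{eq:upw_SBP} and replacing the continuous skew-symmetry of Lemma \ref{lem:anti_symmetry} with its discrete analogue, Lemma \ref{lem:anti_sym_prop_disc}. I would work per block first, and only at the end sum over $\mathbf{Q}^-$ and $\mathbf{Q}^+$ to expose the interface term.

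First, for each $\mathbf{Q}\in\{\mathbf{Q}^-,\mathbf{Q}^+\}$, I differentiate the discrete energy norm $\|\mathbf{Q}\|_{HP}^2$ and insert the semi-discrete equation \eqref{eq:gen_hyp_transformed_discrete} to obtain
\begin{align*}
\tfrac{d}{dt}\|\mathbf{Q}\|_{HP}^2 = \l \bm{Q},\grad_{D_{-}}\bullet\mathbf{F}(\mathbf{Q})\r_{H} + \sum_{\xi\in\{q,r,s\}}\l\bm{Q},\mathbf{B}_\xi(\grad_{D_{+}}\mathbf{Q})\r_{H}.
\end{align*}
Direction-by-direction, I apply \eqref{eq:upw_SBP} to the conservative term to move $\bm{D}_{-\xi}$ off $\bm{F}_\xi(\bm{Q})$ and onto $\bm{Q}$. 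This generates a forward-difference pairing $\l(I_9\otimes\bm{D}_{+\xi})\bm{Q},\bm{F}_\xi(\bm{Q})\r_H$ plus boundary contributions at $\xi=0,1$. The $q$-boundary terms collapse cleanly to $\mathbb{I}_q(\bm{v}^T\bm{T})|_{q=1}-\mathbb{I}_q(\bm{v}^T\bm{T})|_{q=0}$ thanks to the identity \eqref{state_times_flux}, while the $r$- and $s$-boundary contributions are discarded as in the continuous argument since we are focused on the fault surface $\widetilde{\Gamma}_F$.

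Next, I invoke Lemma \ref{lem:anti_sym_prop_disc} to annihilate the remaining volume contributions: the non-conservative inner product $\l\bm{Q},\bm{B}_\xi(\grad_{D_{+}}\bm{Q})\r_H$ cancels exactly the forward-difference pairing produced above. What remains for each block is simply the $q$-boundary term
\begin{align*}
\tfrac{d}{dt}\|\mathbf{Q}\|_{HP}^2 = \mathbb{I}_q(\bm{v}^T\bm{T})\big|_{q=1} - \mathbb{I}_q(\bm{v}^T\bm{T})\big|_{q=0}.
\end{align*}
Finally, I sum the two per-block identities. With the orientation adopted in Figure \ref{fig:cur-transform}, the fault corresponds to $q=1$ for $\widetilde{\Omega}^-$ and $q=0$ for $\widetilde{\Omega}^+$; the remote $q$-faces are again set aside, and the two surviving contributions combine into the jump $(\bm{v}^+)^T\bm{T}^+-(\bm{v}^-)^T\bm{T}^-$, recognised as $-\mathbb{I}\mathbb{T}\!\left((\bm{v}^+)^T\bm{T}^+-(\bm{v}^-)^T\bm{T}^-\right)$ through the surface cubature \eqref{eq:disc_boundary_term}.

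The analytical content is already packaged in Lemma \ref{lem:anti_sym_prop_disc}, so the hard part is mostly bookkeeping: making sure that the DP pairing of $D_{-\xi}$ for the conservative flux with $D_{+\xi}$ for the non-conservative product is exactly what is needed for the SBP boundary residues to match the discrete skew-symmetry cancellation, and tracking signs and orientations so that the interior-facing faces of the two blocks aggregate into a single jump term on $\widetilde{\Gamma}_F$ with the correct sign. No dissipation from $S_\pm$ appears at this stage because the discrete skew-symmetry cancellation is strict; any dissipative contribution is deferred to the later analysis that couples the blocks via the penalty enforcement of the friction law.
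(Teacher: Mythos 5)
Your proof is correct and follows essentially the same route as the paper: differentiate the discrete energy, insert the semi-discrete equation, apply the DP SBP identity \eqref{eq:upw_SBP} to shift $D_{-\xi}$ from the conservative flux onto the state (yielding the $D_{+\xi}$-pairing plus boundary residues, simplified via \eqref{state_times_flux}), cancel the remaining volume terms with Lemma \ref{lem:anti_sym_prop_disc}, and finally sum the two blocks retaining only the fault-side $q$-boundary terms. Your remarks on orientation and on the absence of $S_{\pm}$-dissipation at this stage are also consistent with the paper's argument.
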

\subsection{Numerical interface treatment}
In this section, we will discuss the numerical implementation of the nonlinear interface conditions \eqref{eq:transformed_interface_condition} using the  SAT penalty method. The procedure is similar to the method derived in \cite{DuruandDunham2016} using traditional SBP operators. Here we extend the procedure to the DP SBP framework \cite{DURU2022110966,williams2021provably}.  First, we will  construct interface data, $\widehat{v}_\eta, \widehat{T}_\eta$, as discussed in section \ref{sec:hat_variable} and derive the  SAT penalty terms by penalizing data, that is $\widehat{v}_\eta, \widehat{T}_\eta$, against the in-going waves only. The SAT terms are appended to the semi-discrete problem \eqref{eq:gen_hyp_transformed_discrete} with appropriate penalty parameters  such that a discrete energy estimate, similar to Theorem \ref{theo:boundedness_hat}, is derived.

Consider $\mathbf{Q} \in \{\mathbf{Q}^{-}, \mathbf{Q}^{+}\}$, the semi-discrete approximation of the elastic wave equation \eqref{eq:transformedEQ} with the weak enforcement of the interface conditions is
\begin{align}\label{eq:gen_hyp_transformed_discrete_SAT}
\widetilde{\mathbf{P}}^{-1} \frac{d }{d t} {\mathbf{Q}} = \grad_{D_{-}}\bullet {\mathbf{F} \left({\mathbf{Q}} \right)} + \sum_{\xi \in \{ q, r, s \} }\mathbf{B}_\xi\left(\grad_{D_{+}}{\mathbf{Q}}\right) + \mathbf{SAT}_{q}\left(\mathbf{v}-\widehat{\mathbf{v}}, \mathbf{T}-\widehat{\mathbf{T}}\right).
\end{align}
Here $\mathbf{SAT}_{q}$ is the SAT penalty term enforcing the fault interface conditions. 
Below, we will derive the explicit form of the SAT terms, $\mathbf{SAT}_{q}$. 
  
 Introduce the penalisation terms
    {
    \begin{align}\label{eq:penalty_terms}
  &{G}_\eta^+ = \frac{1}{2} {Z}_\eta^+ \left({v}_\eta ^+- \widehat{{v}}_\eta^+ \right)- \frac{1}{2}\left({T}_\eta^+  - \widehat{{T}}_\eta^+ \right),   \quad \widetilde{G}_\eta^+ \coloneqq \frac{1}{{Z}_\eta}{G}_\eta^+  ,
  \\
  \nonumber
  &{G}_\eta^-  = \frac{1}{2} {Z}_\eta^-\left({v}_\eta^-  - \widehat{{v}}_\eta^- \right)+ \frac{1}{2}\left({T}_\eta^-  - \widehat{{T}}_\eta^- \right), \quad \widetilde{G}_\eta^- \coloneqq \frac{1}{{Z}_\eta }{G}_\eta^- .
  \end{align}
  }
  %
  The penalty terms are computed in the transformed coordinates $l,m,n$, and will be rotated to the physical coordinates $x,y,z$. We  have
  \begin{align}\label{eq:rotate_back_forth}
  {\mathbf{G}} := \begin{pmatrix}
{G}_{x} \\
{G}_{y} \\
{G}_{z}
\end{pmatrix}
 = \mathbf{R}^T\begin{pmatrix}
{G}_{n} \\
{G}_{m} \\
{G}_{l}
\end{pmatrix},
\quad 
\widetilde{\mathbf{G}}:= 
  \begin{pmatrix}
\widetilde{G}_{x} \\
\widetilde{G}_{y} \\
\widetilde{G}_{z}
\end{pmatrix} = \mathbf{R}^T\begin{pmatrix}
\widetilde{G}_{n} \\
\widetilde{G}_{m} \\
\widetilde{G}_{l}
\end{pmatrix},
\end{align}
where $\mathbf{G} \in \{\mathbf{G}^-, \mathbf{G}^+\}$.
  Note that
\begin{equation}\label{eq:identity_pen}
\begin{split}
 \left(\left(\mathbf{v}^+\right)^T \mathbf{G}^+ - \left(\mathbf{T}^+\right)^T \widetilde{\mathbf{G}}^+ + \left(\mathbf{v}^+\right)^T\left(\mathbf{T}^+\right)\right)= 
 \sum_{\eta \in \{ l,m,n \} } \left(\frac{1}{Z_\eta^+  }|G_\eta^+ |^2  + \widehat{T}_\eta^+ \widehat{v}_\eta^+ \right), \\
 \left(\left(\mathbf{v}^-\right)^T \mathbf{G}^- + \left(\mathbf{T}^-\right)^T \widetilde{\mathbf{G}}^- - \left(\mathbf{v}^-\right)^T\left(\mathbf{T}^-\right)\right) = 
\sum_{\eta \in \{ l,m,n \} } \left(\frac{1}{Z_\eta^-  }|G_\eta^- |^2  - \widehat{T}_\eta^{-} \widehat{v}_\eta^{-} \right).
\end{split}
\end{equation}
%
We introduce the SAT vectors that match the eigen--structure of the elastic wave equation 
 {
 \begin{align}
\mathbf{SAT}^+ = 
         \begin{pmatrix}
            {G}_x^+\\
            {G}_y^+\\
            {G}_z^+\\
            -{n_x}\widetilde{{G}}_x^+, \\
            -{n_y}\widetilde{{G}}_y^+ \\
            -{n_z}\widetilde{{G}}_z^+, \\
            -\left({n_y}\widetilde{{G}}_x^+ + {n_x}\widetilde{{G}}_y^+\right)\\
            -\left({n_z}\widetilde{{G}}_x^+ + {n_x}\widetilde{{G}}_z^+\right)\\ -\left({n_z}\widetilde{{G}}_y^+ + {n_y}\widetilde{{G}}_z^+\right)\\
         \end{pmatrix},
         \quad
\mathbf{SAT}^{-} = 
         \begin{pmatrix}
            {G}_x^{-}\\
            {G}_y^{-}\\
            {G}_z^{-}\\
            {n_x}\widetilde{{G}}_x^{-}, \\
            {n_y}\widetilde{{G}}_y^{-} \\
            {n_z}\widetilde{{G}}_z^{-}, \\
            \left({n_y}\widetilde{{G}}_x^{-} + {n_x}\widetilde{{G}}_y^{-}\right)\\
            \left({n_z}\widetilde{{G}}_x^{-} + {n_x}\widetilde{{G}}_z^{-}\right)\\ \left({n_z}\widetilde{{G}}_y^{-} + {n_y}\widetilde{{G}}_z^{-}\right)\\
         \end{pmatrix}.
\end{align}
      }
  Here, $\mathbf{n} = (n_x, n_y, n_z)^T$ is the positively pointing  unit normal vector at the fault interface, defined in \eqref{eq:normal_vector}.
  Note also that
  \begin{equation}\label{eq:scalar_product_flux}
  \left(\mathbf{Q}^+\right)^T\mathbf{SAT}^{+} = \left(\mathbf{v}^+\right)^T \mathbf{G}^{+} - \left(\mathbf{T}^+\right)^T \widetilde{\mathbf{G}}^{+} , \quad \left(\mathbf{Q}^-\right)^T\mathbf{SAT}^{-} = \left(\mathbf{v}^-\right)^T \mathbf{G}^{-} + \left(\mathbf{T}^-\right)^T \widetilde{\mathbf{G}}^{-}.
  \end{equation}
Finally, the SAT terms are defined as follows
\begin{align}\label{eq:SAT_General}
\mathbf{SAT}_{q}^{-} = -\bm{H}_{\xi}^{-1}\mathbf{e}_{q,n_q}\bm{J}\sqrt{\bm{\xi}_x^2 + \bm{\xi}_y^2 + \bm{\xi}_z^2}\mathbf{SAT}^{-}, 
\quad
\mathbf{SAT}_{q}^{+} = -\bm{H}_{\xi}^{-1}\mathbf{e}_{q,0}\bm{J}\sqrt{\bm{\xi}_x^2 + \bm{\xi}_y^2 + \bm{\xi}_z^2}\mathbf{SAT}^{+}.
\end{align}
Introduce the fluctuation term
\begin{align}\label{eq:fluctuation_term}
   {F}_{luc}\left({\bm{G}},\mathbf{Z}\right) \coloneqq  - \mathbb{I}_{q}\left(\sum_{\eta = l,m,n} \frac{1}{Z_\eta  }|G_\eta |^2\right) \le 0, 
\end{align}
and discrete surface interface term
    \begin{align}\label{eq:disc_boundary_term_hat}
   \mathbb{I}\mathbb{T}_s=\mathbb{I}_q\left((\widehat{\mathbf{v}}^-)^T\widehat{\mathbf{T}}^--(\widehat{\mathbf{v}}^+)^T\widehat{\mathbf{T}}^+\right)=  -\mathbb{I}_{q}\left(\sum_{j \in \{l, m\} }\frac{\widehat{\alpha}}{ \left(\eta_j +\widehat{\alpha}\right)^2}\Phi_j^2\right) 
   =
   -\mathbb{I}_{q}\left( \widehat{\alpha}|\widehat{V}|^2 \right) \le 0,
\end{align}
where the surface cubature $\mathbb{I}_{q}$ is defined in \eqref{eq:surface_cubature_1D}. 

We can prove the result
    \begin{theorem}\label{theo:discrete_boundedness}
    Consider the semi-discrete approximation \eqref{eq:gen_hyp_transformed_discrete_SAT} of the elastic wave equation with the SAT-terms  $\mathbf{SAT}_{q}$ defined in \eqref{eq:SAT_General}.
    We have
    \begin{align*}
        \frac{d}{dt} \left(\|\mathbf{Q}^-\left(\cdot, \cdot, \cdot, t\right)\|_{HP}^2 + \|\mathbf{Q}^+\left(\cdot, \cdot, \cdot, t\right)\|_{HP}^2\right) = {F}_{luc}\left({\bm{G}}^-,\mathbf{Z}^-\right) + {F}_{luc}\left({\bm{G}}^+,\mathbf{Z}^+\right) + \mathbb{I}\mathbb{T}_s  \le 0.
    \end{align*}
    \end{theorem}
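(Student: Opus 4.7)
The plan is to mirror the continuous argument used for Theorem \ref{theo:boundedness_hat}, but in the discrete setting, by combining the unpenalised energy identity of Theorem \ref{theo_sbp_sem_discrete} with the contribution of the SAT terms and then invoking the algebraic identities \eqref{eq:identity_pen}--\eqref{eq:scalar_product_flux} and the hat-variable identities \eqref{eq:identity_4}--\eqref{eq:identity_5}.

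First, for each $\mathbf{Q} \in \{\mathbf{Q}^-, \mathbf{Q}^+\}$ I would differentiate the discrete energy $\|\mathbf{Q}(\cdot,\cdot,\cdot,t)\|_{HP}^2$, substitute the penalised semi-discrete equation \eqref{eq:gen_hyp_transformed_discrete_SAT}, and split the right-hand side into the unpenalised volume contribution and the SAT contribution. The unpenalised contribution is handled exactly as in Theorem \ref{theo_sbp_sem_discrete}: applying the DP SBP property \eqref{eq:upw_SBP}, Lemma \ref{lem:anti_sym_prop_disc}, and \eqref{state_times_flux} reduces it to the boundary cubature $\mathbb{I}_q(\mathbf{v}^T\mathbf{T})$ evaluated at the $q$-interface (the non-interface faces being ignored per the theorem's scope).

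Next, I would compute the SAT contribution. Because the penalty is localised at the interface through the lifting $\bm{H}_\xi^{-1}\mathbf{e}_{q,\cdot}$ with weight $\bm{J}\sqrt{\bm{\xi}_x^2+\bm{\xi}_y^2+\bm{\xi}_z^2}$, the inner product $\l \mathbf{Q}, \mathbf{SAT}_q\r_{\bm{H}}$ collapses exactly to the surface cubature $\mathbb{I}_q(\mathbf{Q}^T \mathbf{SAT})$. Using \eqref{eq:scalar_product_flux}, this equals $\mathbb{I}_q\left((\mathbf{v}^+)^T\mathbf{G}^+ - (\mathbf{T}^+)^T\widetilde{\mathbf{G}}^+\right)$ on the $+$ side and $\mathbb{I}_q\left((\mathbf{v}^-)^T\mathbf{G}^- + (\mathbf{T}^-)^T\widetilde{\mathbf{G}}^-\right)$ on the $-$ side. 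Summing the boundary cubature from the unpenalised identity with these SAT contributions then produces, on each side, precisely the bracketed expression in \eqref{eq:identity_pen}. Applying that identity rewrites the $+$ and $-$ boundary contributions as a sum of a non-positive fluctuation $-\mathbb{I}_q\bigl(\sum_\eta \tfrac{1}{Z_\eta^\pm}|G_\eta^\pm|^2\bigr)$ plus hat-traction work terms $\pm \mathbb{I}_q(\widehat{T}_\eta^\pm \widehat{v}_\eta^\pm)$.

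Once the two sides are added, the fluctuations combine to give $F_{luc}(\bm{G}^-,\mathbf{Z}^-) + F_{luc}(\bm{G}^+,\mathbf{Z}^+)$ as defined in \eqref{eq:fluctuation_term}. The remaining hat-terms are $\mathbb{I}_q\bigl((\widehat{\mathbf{v}}^-)^T\widehat{\mathbf{T}}^- - (\widehat{\mathbf{v}}^+)^T\widehat{\mathbf{T}}^+\bigr)$, which I would reorganise via the jump $\lJump\widehat{v}_j\rJump$ using force balance $\widehat{T}_j^- = \widehat{T}_j^+ = \widehat{T}_j$; the identities \eqref{eq:identity_4}--\eqref{eq:identity_5} together with the no-opening condition then identify this quantity with the discrete interface term $\mathbb{I}\mathbb{T}_s$ of \eqref{eq:disc_boundary_term_hat}, which is non-positive by $\widehat{\alpha}\ge 0$.

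The main obstacle I anticipate is the careful bookkeeping of signs and of the normal-vector orientation in \eqref{eq:penalty_terms} and in the definitions of $\mathbf{SAT}^\pm$: because the outward normals at the $\pm$ sides are opposite and because the DP SBP property distributes the boundary term as $e_n e_n^T - e_1 e_1^T$, one must verify that the SAT at the $q=0$ face of the $+$ block and at the $q=1$ face of the $-$ block add (rather than cancel with) the Cauchy boundary term exactly so that the pattern of \eqref{eq:identity_pen} is realised. Modulo this bookkeeping, the argument is purely algebraic and parallels the continuous proof of Theorem \ref{theo:boundedness_hat}, with Lemma \ref{lem:anti_sym_prop_disc} playing the role of Lemma \ref{lem:anti_symmetry}.
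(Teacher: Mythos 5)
Your proposal follows the paper's proof essentially step for step: differentiate the discrete energy, split off the SAT contribution, reduce the unpenalised part to the boundary cubature via Theorem \ref{theo_sbp_sem_discrete}, collapse $\l\bm{Q},\mathbf{SAT}_q^\pm\r_H$ to surface cubatures using \eqref{eq:scalar_product_flux}, then apply \eqref{eq:identity_pen} and \eqref{eq:disc_boundary_term_hat} to obtain the fluctuation terms and $\mathbb{I}\mathbb{T}_s$. The one point you flag as needing care (sign and normal-orientation bookkeeping at the $q=0$ versus $q=1$ faces) is exactly what the paper's definitions of $\mathbf{SAT}_q^\pm$ and $\mathbf{G}^\pm$ are engineered to make work out, so your argument is sound and matches the paper's.
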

    \begin{proof}
   For $\mathbf{Q} \in \{\mathbf{Q}^{-}, \mathbf{Q}^{+}\}$ consider
    {\small
    \begin{align*}
        \frac{d}{dt} \|\mathbf{Q}\left(\cdot, \cdot, \cdot, t\right)\|_{HP}^2 = \l \bm{Q} , P^{-1} \pf{}{t}  \bm{Q} \r_{H}  &= \l \bm{Q}, \grad_{D_{-}}\bullet {\mathbf{F} \left({\mathbf{Q}} \right)} + \sum_{\xi= q, r, s}\mathbf{B}_\xi\left(\grad_{D_{+}}{\mathbf{Q}}\right) \r_{H} \\
        &+  \l \bm{Q},\mathbf{SAT}_{q}\left({\mathbf{Q}}\right)\r_{H}.
    \end{align*}
    }
    By Theorem \ref{theo_sbp_sem_discrete} we have
    \begin{align}
        \frac{d}{dt} \|\mathbf{Q}\left(\cdot, \cdot, \cdot, t\right)\|_{HP}^2  = \mathbb{I}_q \left({\mathbf{v}}^T {\mathbf{T}}\right)\Big|_{q = 1} - \mathbb{I}_q \left({\mathbf{v}}^T {\mathbf{T}}\right)\Big|_{q = 0} +   \l \bm{Q},\mathbf{SAT}_{q}\left({\mathbf{Q}}\right)\r_{H},
    \end{align}
    with
    $$
    \l \bm{Q}, \mathbf{SAT}_{q}^+ \r_{H} = -\mathbb{I}_{q}\left(\mathbf{v}^T \mathbf{G}^+ - \mathbf{T}^T \widetilde{\mathbf{G}}^+\right), \quad \l \bm{Q}, \mathbf{SAT}_{q}^- \r_{H} = -\mathbb{I}_{q}\left(\mathbf{v}^T \mathbf{G}^- + \mathbf{T}^T \widetilde{\mathbf{G}}^-\right).
    $$
    By collecting contributions from both sides of the interface and using the identities \eqref{eq:identity_pen} and \eqref{eq:disc_boundary_term_hat} we have
    \begin{equation}
    \begin{split}
        &\frac{d}{dt} \left(\|\mathbf{Q}^-\left(\cdot, \cdot, \cdot, t\right)\|_{HP}^2 + \|\mathbf{Q}^+\left(\cdot, \cdot, \cdot, t\right)\|_{HP}^2\right) = \\
        &-\mathbb{I}_{q}\left(\left(\mathbf{v}^{+}\right)^T \mathbf{G}^+ - \left(\mathbf{T}^{+}\right)^T \widetilde{\mathbf{G}}^+ + \left(\mathbf{v}^{+}\right)^T\mathbf{T}^{+}\right) 
        -
        \mathbb{I}_{q}\left(\left(\mathbf{v}^{-}\right)^T \mathbf{G}^- + \left(\mathbf{T}^{-}\right)^T \widetilde{\mathbf{G}}^- - \left(\mathbf{v}^{-}\right)^T\mathbf{T}^-\right)\\
        &= -\mathbb{I}_{q}\left(\sum_{\eta \in \{ l,m,n \} } \frac{1}{Z_\eta^+  }|G_\eta^+ |^2 + \left(\widehat{\mathbf{v}}^+\right)^T\widehat{\mathbf{T}}^+ \right) 
        -
        \mathbb{I}_{q}\left(\sum_{\eta \in \{ l,m,n \} } \frac{1}{Z_\eta^-  }|G_\eta^- |^2-\left(\widehat{\mathbf{v}}^-\right)^T\widehat{\mathbf{T}}^- \right)\\
        & =  {F}_{luc}\left({\bm{G}}^-,\mathbf{Z}^-\right) + {F}_{luc}\left({\bm{G}}^+,\mathbf{Z}^+\right) +  \mathbb{I}\mathbb{T}_s   \le 0.
        \end{split}
    \end{equation}
    The proof is complete. 
    \end{proof}
The fluctuation term ${F}_{luc}\left({\bm{G}},\mathbf{Z}\right) \le 0 $ adds a little numerical dissipation on the interface. However, in the limit of mesh refinement the fluctuation term vanishes, that is ${F}_{luc}\left({\bm{G}},\mathbf{Z}\right) \to 0^+$ as $h \to 0^+$, and we have  $\mathbb{I}\mathbb{T}_s \to \mathrm{IT}_s$. 
The discrete result, Theorem \ref{theo:discrete_boundedness}, is completely analogous to the continuous counterparts, Theorems \ref{theo:boundedness} and \ref{theo:boundedness_hat}.

\section{Analysis of numerical errors}
Here, we will discuss numerical errors for the semi-discrete approximation \eqref{eq:gen_hyp_transformed_discrete_SAT}. Numerical error analysis for the discrete approximation of the 3D elastic wave equation subject to the nonlinear friction laws  \eqref{eq:Force_balance}--\eqref{eq:Friction_law} at the fault/interface, is a nontrivial task. %
This is mostly due to the fact that the well-posedness of the continuous IBVP  problem is not completely understood in general settings. To succeed we will make some simplifying assumptions. However our discussions are plausible and will help explain some of the behavior of the error seen in the numerical simulations performed later in this study.  
We will derive an error estimate for a linearised friction law. Next we will discuss the generation and propagation of  errors due to  DP SBP numerical differentiation of discontinuous and nearly singular exact solutions.
\subsection{Error estimate}
Let $\boldsymbol{\mathcal{Q}} \in \{\boldsymbol{\mathcal{Q}}^{-}, \boldsymbol{\mathcal{Q}}^{+}\}$ denote the exact solutions of the IBVP, and $\boldsymbol{\mathcal{Q}}(q_i, r_j, s_k, t)$ denote the restriction of the solution on the grid $(q_i, r_j, s_k)$. We introduce the error on the grid
$$
\boldsymbol{\mathcal{E}}_{ijk}(t) = \mathbf{Q}_{ijk}(t)-\boldsymbol{\mathcal{Q}}(q_i, r_j, s_k, t).
$$
For the velocity and traction $\left({\mathbf{v}}, \mathbf{T}\right)$ and their hat-variable $\left(\widehat{\mathbf{v}}, \widehat{\mathbf{T}}\right)$  at the fault interface, we denote the errors by
$
\left({\mathbf{e}}_v, {\mathbf{e}}_T\right), 
$
$
\left(\widehat{\mathbf{e}}_v, \widehat{\mathbf{e}}_T\right).
$
The error $\boldsymbol{\mathcal{E}}$ satisfies the error equation
\begin{align}\label{eq:gen_hyp_transformed_discrete_SAT_error}
\widetilde{\mathbf{P}}^{-1} \frac{d }{d t} \boldsymbol{\mathcal{E}} = \grad_{D_{-}}\bullet {\mathbf{F} \left(\boldsymbol{\mathcal{E}} \right)} + \sum_{\xi \in \{ q, r, s \} }\mathbf{B}_\xi\left(\grad_{D_{+}}\boldsymbol{\mathcal{E}}\right) + \mathbf{SAT}_{q}\left({\mathbf{e}}_v-\widehat{\mathbf{e}}_v, {\mathbf{e}}_T-\widehat{\mathbf{e}}_T\right) + \mathbb{T},
\end{align}
where $\mathbb{T}$ is the truncation error of the SBP FD operator. The truncation error $\mathbb{T}$ is a 3D vector field, however its structure  is similar in each spatial direction. For sufficiently smooth exact solutions $\mathcal{Q}$ sampled on the grid points $\xi_j = j h_{\xi}$ in the spatial direction $\xi \in \{q, r, s\}$ the truncation error is of the form
\begin{equation}\label{eq:truncation_error}
\begin{split}
& \mathbb{T}_{\xi,j}  = \left \{
\begin{array}{rl}
 h_{\xi}^{\gamma} \beta_j\frac{\partial^{\gamma+1} \mathcal{Q}}{\partial \xi^{\gamma +1}}\Big|_{\xi_j},  & \text{if boundary}  ,\\
 h_{\xi}^{\nu} \beta_j\frac{\partial^{\nu+1} \mathcal{Q}}{\partial \xi^{\nu+1}}\Big|_{\xi_j},
 & \text{if interior}.
\end{array} \right\} ,
\end{split}
\end{equation}
where $\mathbb{T}_{ijk} = \mathbb{T}_{q,i} + \mathbb{T}_{r,j} + \mathbb{T}_{s,k}$.
Here, $\beta_j$, are mesh independent constants,  $h_{\xi}>0$ is the grid spacing,  $\gamma \in \{1, 2, \cdots \}$ is the order of accuracy of the FD stencils close to the boundary and $\nu \in \{1, 2, \cdots \}$ is the order of accuracy of the SBP FD stencils in the interior, away from the boundaries.
For traditional SBP operators based on central difference stencils the interior accuracy is always even, and  we have $(\gamma, \nu) = (p, 2p)$, for $p = 1, 2, 3, \cdots$. DP SBP operators based on skewed stencils, $D_{+}$ and  $D_{-}$, the interior order of accuracy can be odd or even. As discussed in  \cite{DURU2022110966,Mattsson2017,williams2021provably}, DP SBP operators with even order $\left(2p\right)$-th accuracy in the interior are closed with $p$-th order accurate stencils close to boundaries, and we also have $(\gamma, \nu) = (p, 2p)$. Dual-paring SBP FD operators with odd order $\left(2p+1\right)$-th accuracy in the interior are closed with $p$-th order accurate stencils close to boundaries, giving $(\gamma, \nu) = (p, 2p+1)$. 
The traditional SBP operators and DP SBP operators can yield $\left(p+1\right)$-th global order of accuracy, for smooth solutions.

Note that if the hat-variables $\left(\widehat{\mathbf{e}}_v, \widehat{\mathbf{e}}_T\right)$ for the error   satisfy the algebraic identities \eqref{eq:identity_1}--\eqref{eq:identity_4}, then using Theorem \ref{theo:discrete_boundedness} we can derive an estimate for the error.
For the general  friction law \eqref{eq:Friction_law_0} with the nonlinear frictional strength $\alpha(\sigma_n, V, \psi)\ge 0$, it is difficult to show that the hat-variable $\left(\widehat{\mathbf{e}}_v, \widehat{\mathbf{e}}_T\right)$ satisfy the necessary algebraic identities \eqref{eq:identity_1}--\eqref{eq:identity_4} required for stability. 

The nonlinear frictional strength parameter is never negative, that is 
$$
\alpha(\sigma_n, V, \psi)\ge 0,\quad \text{for  all} \quad \sigma_n\ge0, V\ge0, \quad \psi\in \mathbb{R}.
$$ 
To succeed, we linearise the friction law and consider the frozen coefficient case 
$$\alpha(\sigma_n, V, \psi) \to \alpha = const. \ge 0, \quad \text{with}
\quad
0\le \alpha \le \infty.$$
With this linearisation we minimise some of the nonlinear effects. However, the solutions can be discontinuous and nearly singular at the interface, and   the hat-variables for the error $\left(\widehat{\mathbf{e}}_v, \widehat{\mathbf{e}}_T\right)$  satisfy the identities \eqref{eq:identity_1}--\eqref{eq:identity_4}.

We can prove the result for the linearised friction law.
    \begin{theorem}\label{theo:error_estimate}
    Consider the semi-discrete error equation \eqref{eq:gen_hyp_transformed_discrete_SAT_error}, with the numerical error $\boldsymbol{\mathcal{E}} = \left(\boldsymbol{\mathcal{E}}^-, \boldsymbol{\mathcal{E}}^+\right)$  and the truncation error $\boldsymbol{\mathbb{T}} = \left(\boldsymbol{\mathbb{T}}^-, \boldsymbol{\mathbb{T}}^+\right)$ of the elastic wave equation with the SAT-terms  $\mathbf{SAT}_{q}$ defined in \eqref{eq:SAT_General}.
   Let $\alpha(\sigma_n, V, \psi) \to \alpha\ge 0$, where $\alpha\ge0$ is a real constant, we have
    \begin{align*}
        \frac{d}{dt} \|\boldsymbol{\mathcal{E}}\left(t\right)\|_{HP}^2 + \mathbb{B}\mathbb{T}_s
        \le
        \|\boldsymbol{\mathcal{E}}\left(t\right)\|_{HP}\|\boldsymbol{\mathbb{T}}\left(t\right)\|_{HP^{-1}},
    \end{align*}
    where
    $\mathbb{B}\mathbb{T}_s
        =
        -\left({F}_{luc}\left({\bm{G}}^-,\mathbf{Z}^-\right) + {F}_{luc}\left({\bm{G}}^+,\mathbf{Z}^+\right) + \mathbb{I}\mathbb{T}_s\right) \ge 0,$ 
$
\mathbb{I}\mathbb{T}_s=-\mathbb{I}_{q}\left(\widehat{\alpha}|\widehat{V}_e|^2 \right) \le 0,
$ 
and

$
|\widehat{V}_e|^2 = \lJump{\widehat{e}_{vl} \rJump}^2 + \lJump{\widehat{e}_{vm} \rJump}^2.
$
\end{theorem}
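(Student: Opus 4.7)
The plan is to apply the discrete energy method directly to the error equation \eqref{eq:gen_hyp_transformed_discrete_SAT_error}, re-tracing the steps of Theorem \ref{theo:discrete_boundedness} with the extra truncation-error term accounted for on the right-hand side. Concretely, I would take the $H$-weighted inner product of $\boldsymbol{\mathcal{E}}$ with the error equation for each block, so that the left-hand side produces $\tfrac{d}{dt}\|\boldsymbol{\mathcal{E}}\|_{HP}^2$ via the identity used in the proof of Theorem \ref{theo_sbp_sem_discrete}.

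On the right-hand side, the volume contributions coming from $\grad_{D_{-}}\bullet\mathbf{F}(\boldsymbol{\mathcal{E}})$ and $\sum_{\xi}\mathbf{B}_{\xi}(\grad_{D_{+}}\boldsymbol{\mathcal{E}})$ are handled exactly as in Theorem \ref{theo_sbp_sem_discrete}: apply the DP SBP property \eqref{eq:upw_SBP} to shift $D_{-\xi}$ onto $\boldsymbol{\mathcal{E}}$, ignore the transverse $r,s$ boundaries, and invoke the discrete skew-symmetry of Lemma \ref{lem:anti_sym_prop_disc} to cancel the interior volume contributions. What remains from the volume terms is the boundary flux $\mathbb{I}_{q}(\mathbf{e}_v^{T}\mathbf{e}_T)|_0^1$ on the fault face. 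The SAT contribution then produces, exactly as in Theorem \ref{theo:discrete_boundedness} with $(\mathbf{v},\mathbf{T})\to(\mathbf{e}_v,\mathbf{e}_T)$ and $(\widehat{\mathbf{v}},\widehat{\mathbf{T}})\to(\widehat{\mathbf{e}}_v,\widehat{\mathbf{e}}_T)$, the penalty identities in \eqref{eq:identity_pen} applied to the error, yielding the fluctuation functionals $F_{luc}(\mathbf{G}^{\pm},\mathbf{Z}^{\pm})\le 0$ together with a hat-interface term $\mathbb{I}_{q}((\widehat{\mathbf{e}}_v^{-})^{T}\widehat{\mathbf{e}}_T^{-}-(\widehat{\mathbf{e}}_v^{+})^{T}\widehat{\mathbf{e}}_T^{+})$.

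The crucial step specific to the error analysis, and where I expect the main obstacle to lie, is justifying that the hat-variables formed from the error obey the algebraic identities \eqref{eq:identity_1}--\eqref{eq:identity_4}. Under the frozen-coefficient assumption $\alpha(\sigma_n,V,\psi)\to\alpha=\mathrm{const}\ge 0$, the friction law \eqref{eq:Friction_law_hat} becomes linear in the jumps, so the projection $P$ of Section \ref{sec:hat_variable} reduces to solving a linear system whose right-hand side is now built from the error characteristics $(\mathbf{e}_v,\mathbf{e}_T)$ rather than $(\mathbf{v},\mathbf{T})$. By linearity, the resulting error hat-variables $(\widehat{\mathbf{e}}_v,\widehat{\mathbf{e}}_T)$ inherit the same algebraic structure, and the $\widehat{V}\to\widehat{V}_e$ substitution in \eqref{eq:identity_5} yields
\begin{equation*}
\mathbb{I}\mathbb{T}_s \;=\; \mathbb{I}_q\bigl((\widehat{\mathbf{e}}_v^{-})^{T}\widehat{\mathbf{e}}_T^{-}-(\widehat{\mathbf{e}}_v^{+})^{T}\widehat{\mathbf{e}}_T^{+}\bigr) \;=\; -\mathbb{I}_q\bigl(\widehat{\alpha}|\widehat{V}_e|^{2}\bigr)\;\le\; 0,
\end{equation*}
with $|\widehat{V}_e|^{2}=\lJump\widehat{e}_{vl}\rJump^{2}+\lJump\widehat{e}_{vm}\rJump^{2}$ as stated. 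This is precisely the step the author flags as failing in the fully nonlinear case.

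Finally, the truncation term $\mathbb{T}$ produces the inner product $\langle\boldsymbol{\mathcal{E}},\mathbb{T}\rangle_{H}$ after multiplication by $\widetilde{\mathbf{P}}^{-1}$, which by the Cauchy--Schwarz inequality in the weighted norm (writing $\langle\boldsymbol{\mathcal{E}},\mathbb{T}\rangle_{H}=\langle P^{1/2}\boldsymbol{\mathcal{E}},P^{-1/2}\mathbb{T}\rangle_{H}$) is bounded by $\|\boldsymbol{\mathcal{E}}\|_{HP}\,\|\mathbb{T}\|_{HP^{-1}}$. Collecting the dissipative contributions into $\mathbb{B}\mathbb{T}_s=-(F_{luc}(\mathbf{G}^{-},\mathbf{Z}^{-})+F_{luc}(\mathbf{G}^{+},\mathbf{Z}^{+})+\mathbb{I}\mathbb{T}_s)\ge 0$ and moving this non-negative quantity to the left-hand side delivers the stated estimate. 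The rest is bookkeeping; the non-trivial content is the linearity argument that unlocks the identities \eqref{eq:identity_1}--\eqref{eq:identity_4} for the error hat-variables.
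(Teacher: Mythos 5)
The paper does not actually supply a proof of Theorem~\ref{theo:error_estimate}; the text preceding it only asserts that, after the frozen-coefficient linearisation, the error hat-variables satisfy the identities \eqref{eq:identity_1}--\eqref{eq:identity_4}, and then the theorem is stated with no proof environment. Your proposal therefore fills a genuine gap, and it does so along exactly the lines the paper hints at: re-run the energy argument of Theorem~\ref{theo:discrete_boundedness} on the error equation \eqref{eq:gen_hyp_transformed_discrete_SAT_error}, observe that the truncation term generates $\langle\boldsymbol{\mathcal{E}},\boldsymbol{\mathbb{T}}\rangle_{H}$, and argue that under $\alpha=\mathrm{const}$ the hat-variable construction of Section~\ref{sec:hat_variable} is a linear map of the characteristics, so that it commutes with taking differences and the error hat-variables inherit the identities \eqref{eq:identity_1}--\eqref{eq:identity_5} verbatim with $\widehat{V}\to\widehat{V}_e$. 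That linearity observation is the single step the paper leaves implicit, and you have correctly identified and supplied it.

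One small technical slip: in the Cauchy--Schwarz step you write $\langle\boldsymbol{\mathcal{E}},\boldsymbol{\mathbb{T}}\rangle_{H}=\langle P^{1/2}\boldsymbol{\mathcal{E}},P^{-1/2}\boldsymbol{\mathbb{T}}\rangle_{H}$, which after Cauchy--Schwarz would produce $\|\boldsymbol{\mathcal{E}}\|_{HP^{-1}}\|\boldsymbol{\mathbb{T}}\|_{HP}$, the opposite weighting to the one claimed. Since the paper's norm is $\|\mathbf{Q}\|_{HP}^{2}=\langle\mathbf{Q},\tfrac12 \mathbf{P}^{-1}\mathbf{Q}\rangle_{\bm{H}}$, the decomposition you want is $\langle P^{-1/2}\boldsymbol{\mathcal{E}},\,P^{1/2}\boldsymbol{\mathbb{T}}\rangle_{H}$, which yields $\|\boldsymbol{\mathcal{E}}\|_{HP}\,\|\boldsymbol{\mathbb{T}}\|_{HP^{-1}}$ (up to the factor of $2$ coming from the $\tfrac12$ in the norm definition, a constant the theorem's inequality also appears to absorb silently). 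With that sign flip corrected, the argument is sound.
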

By the frozen coefficient analysis above,  Theorem \ref{theo:error_estimate} indicates that the numerical error $\boldsymbol{\mathcal{E}}$ is bounded by the truncation error $\boldsymbol{\mathbb{T}}$, and will converge to zero if $\boldsymbol{\mathbb{T}}$ is square integrable. Obviously, the frozen coefficient analysis above hold for both traditional and DP SBP operators. Note however that the truncation errors  in \eqref{eq:truncation_error} include higher derivatives of the exact solution $\boldsymbol{\mathcal{Q}}$.  If $\boldsymbol{\mathcal{Q}}$ is sufficiently smooth such that the highest derivatives in \eqref{eq:truncation_error} are continuous then the numerical error will converge to zero optimally, $\boldsymbol{\mathcal{E}} = O(h^{\gamma+1})$.
For traditional SBP operators, the convergence rate $O(h^{\gamma+1})$ has been  verified numerically in \cite{DuruandDunham2016} using the method of manufacture solution, that is by forcing the IBVP with smooth exact solutions.  We believe that the numerical result for smooth solutions  hold also for DP SBP operators for friction laws.

For spontaneously self-evolving shear ruptures, the exact solutions are discontinuous across the fault for the particle velocity, and  for the stress fields continuous across the fault but nearly singular. The solutions $\boldsymbol{\mathcal{Q}}$ are rough, and the highest derivatives in \eqref{eq:truncation_error} may not exist. As we will discuss below the  smoothness of the solution and the parity of the order of accuracy  ($\nu$) of the interior stencil of the SBP FD operators will have a significant impact in the generation and propagation of numerical errors.

\subsection{Parity}\label{sec:parity}
Numerical errors for wave dominated problems  are often most prominent at high frequencies. For well-posed IBVPs with smooth solutions high frequency wave modes can be resolved by increasing mesh resolution. However, for discontinuous and/or (nearly) singular exact solutions numerical errors arising from high order accurate methods may not diminish with increasing mesh resolution, because of "Gibbs/Runge's phenomena". For these situations carefully designed numerical methods are necessary in order to compute accurate numerical solutions. Consequently, detailed theoretical analysis is needed in order to  understand the behaviours of numerical errors and control them.

To simplify the discussion here we will consider only the interior stencil, with the order of accuracy $\nu$.  Numerical errors introduced by numerical differentiation of a wave-like solution can be split into the {\it amplitude error} and the {\it phase error}. If the {\it amplitude error} is much larger than the {\it phase error} we say that the numerical error is {\it amplitude-dominated}. Similarly, if the {\it phase error} is much larger than the {\it amplitude error} we say that the numerical error is {\it phase-dominated}. 

We will show that the parity of the order of accuracy $\nu$ will determine whether the numerical error is {\it amplitude-dominated} or {\it phase-dominated}. And if the numerical error is {\it amplitude-dominated}, then the numerical method has the potential to amplify high frequency numerical artefacts for \emph{non-smooth solutions}.

Let $\widetilde{\mathbf{Q}}(t) = [\boldsymbol{\mathcal{Q}}(q_i, r_j, s_k, t)]$ denote the grid function restricting the exact solution on the grid. We have
$$
\bm{D}_{\eta \xi}\widetilde{\mathbf{Q}}\Big|_{j} =  \frac{\partial \mathcal{Q}}{\partial \xi}\Big|_{\xi_j} + h_{\xi}^{\nu} \beta_j\frac{\partial^{\nu+1} \mathcal{Q}}{\partial \xi^{\nu+1}}\Big|_{\xi_j},
$$
where $\bm{D}_{\eta \xi}$ with $\eta \in \{-, +\}$ are the DP SBP operators defined above with the interior order of accuracy $\nu \in \{2p, 2p+1\}$, $p = 1, 2, \cdots $.
The numerical approximation replaces the partial derivative ${\partial}/{\partial \xi}$ by the modified partial differential operator 
$$
\frac{\partial }{\partial \xi} \to \frac{\partial }{\partial \xi} + h_{\xi}^{\nu} \beta_j\frac{\partial^{\nu+1} }{\partial \xi^{\nu+1}}.
$$
To determine whether the numerical error is {\it amplitude-dominated} or {\it phase-dominated} we  consider the 1D monochromatic plane wave solution with unit amplitude $e^{ik_\xi \xi}$, $k_\xi \in \mathbb{R}$, $i = \sqrt{-1}$. The solution will be modified by the numerical approximation as follows
$$
e^{ik_\xi \xi} \to  e^{h_{\xi}^{\nu} \beta_j\left(ik_\xi\right)^{\nu+1} \xi} \cdot e^{ik_\xi \xi}=\left(1 + O(h_{\xi}^{\nu})\right)e^{ik_\xi \xi}.
$$ 
For any finite $k_\xi < \infty$ the modified solution will converge to the monochromatic plane wave solution as $h_\xi \to 0$. Obviously for any $h_\xi > 0$ there are numerical errors. For even order of accuracy $\nu = 2p$ we have
$$
e^{ik_\xi \xi} \to e^{ik_\xi\left(1 + (-1)^p  \beta_j\left(h_{\xi}k_\xi\right)^{\nu} \right)\xi},
$$
which is a phase error.
For odd order of accuracy $\nu = 2p+1$ we have
$$
e^{ik_\xi \xi} \to e^{\left( (-1)^{p+1}  k_\xi \beta_j\left(h_{\xi}k_\xi\right)^{\nu}\right) \xi} \cdot  e^{ik_\xi \xi},
$$
which gives an amplitude error.

Therefore, even order ($\nu = 2p$) accurate numerical operators generate {\it phase-dominated} errors while numerical errors generated by odd order ($\nu = 2p+1$) accurate  operators are {\it amplitude-dominated}. As we will see in the numerical experiments below, upwind SBP operators with odd order accurate interior stencils can amplify high frequency numerical artefacts for spontaneously propagating shear ruptures in 3D elastic solids.

\subsection{$\alpha$-DRP SBP operators}
Using the DP framework \cite{DURU2022110966,williams2021provably,CWilliams2021}, it is possible to design optimised finite difference stencils such that numerical dispersion errors are significantly diminished for all wave numbers. This was the approach taken by the authors in \cite{williams2021provably} where $\alpha$-DRP SBP operators are derived. These DP SBP operators are designed to guarantee maximum relative error $\le \alpha$ for all wave numbers. In particular  the $\alpha$-DRP SBP operators can resolve the highest frequency ($\pi$-mode) present on any equidistant grid at a tolerance of $\alpha = 5\%$ maximum error. For explicit schemes, these operators may provide a better resolution than any volume discretisation available today, including spectral methods, and significantly improves on the current standard for traditional operators that have a tolerance of $100 \%$ maximum error.  

In Figure \ref{fig:drpl2}, we display the numerical dispersion relation for the 1D wave equation,
\begin{align}\label{eq:1D_elastic_wave_equation}
 \frac{\partial v}{\partial t} = \frac{\partial \sigma}{\partial x}, 
    \qquad
\frac{\partial \sigma}{\partial t} = \frac{\partial v}{\partial x},
\end{align}
which has the linear analytical dispersion relation $\omega = k$. Here, $\omega$ is the angular frequency and $k\in \mathbb{R}$ is the spatial wave number. 
\begin{table}[h!]
    \centering
    \caption{$L^2$ relative dispersion errors for various SBP FD operators.}
    \begin{tabular}{c||c|c|c|c}
     & order 4 &  order 5 &  order 6 & order 7 \\
     \hline
     SBP  &  $64.35 \%$ & -   & $58.9 \%$ & - \\
     DP   &  $7.69 \%$ & $43.86 \%$ & $17.54 \%$ & $44.16 \%$ \\
     $\alpha$-DRP  &  $1.91 \%$ & $1.72 \%$ &  $1.36 \%$ & $1.28 \%$ \\
    \end{tabular}
    \label{tab:drpl2}
\end{table}
\begin{figure}[h!]
\centering
  \noindent
  \makebox[\textwidth]{\includegraphics[width =0.75\textwidth]{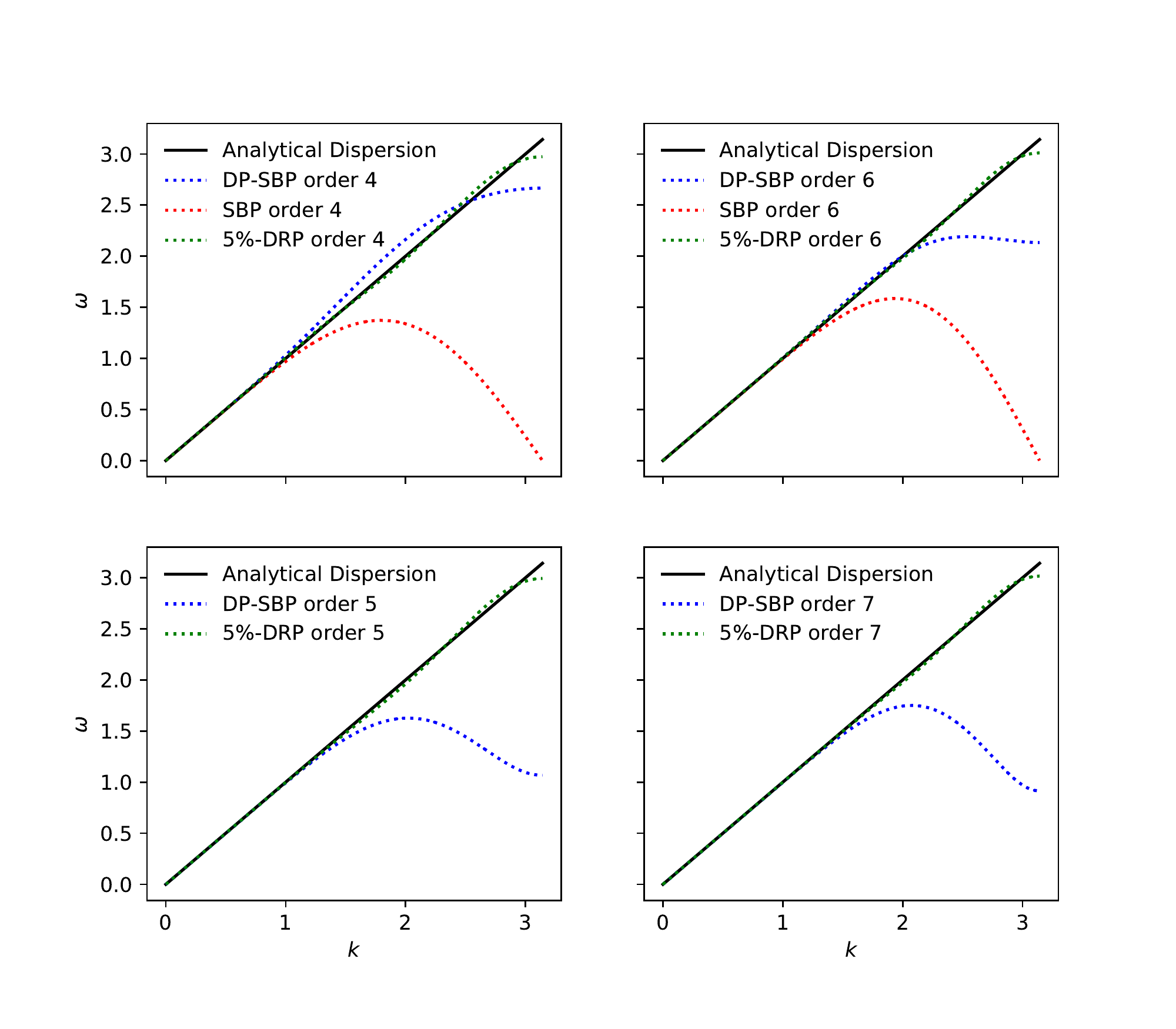}}%
  \caption{Dispersion relations for the 1D wave equation. Here we compare the traditional operators (SBP), the dual-paring operators (DP-SBP) and the $\alpha$-DRP DP operators for 4th, 5th, 6th and 7th order of accuracy.}
  \label{fig:drpl2}
\end{figure}
In Figure \ref{fig:drpl2}, we compare the traditional SBP operators, the DP SBP operators and the $\alpha$-DRP SBP operators. Note in particular, for the $\alpha$-DRP SBP operators the numerical dispersion relations fit  the continuous dispersion relation for the 1D wave equation excellently well for all wave numbers. In Table \ref{tab:drpl2} we tabulate the relative $L^2$ dispersion errors.
For more details we refer the reader to \cite{williams2021provably} where the $\alpha$-DRP SBP operators are derived and analysed. 
As will be shown in the next section, using the $\alpha$-DRP SBP operators we can eliminate the fatal errors arising from the upwind DP SBP operators \cite{Mattsson2017} for dynamic rupture simulations with non-smooth solutions.

    \section{Numerical simulations of dynamic earthquake ruptures in 3D elastic solids}
    In this section, we will perform numerical simulations to verify the analysis performed in the previous sections.
    To highlight the numerical properties of the traditional, upwind DP and DRP SBP operators, we will select specifically the dipping fault benchmark problem, TPV10, from the series of dynamic rupture benchmark problems proposed by The SCEC/USGS Spontaneous Rupture Code Verification Project \cite{SCEC2022}. We note that other benchmark problems with different levels of difficulties have also been considered, although these results are not reported here,  they can be found in the thesis of one of the co-authors  \cite{CWilliams2021}.  
    
    The numerical methods are efficiently implemented in WaveQLab3D \cite{DuruandDunham2016,DURU2022110966}, a petascale finite difference solver.
The solutions are integrated in time using an explicit $5$-stage $4$-th order accurate low-storage Runge-Kutta time stepping scheme \cite{CarpenterKennedy1994}. 
The explicit time-step $dt$ is determined by
\begin{align}
    dt = \mathrm{ CFL} \min_{\xi \in \{q, r, s\} }\sqrt{  \frac{h_{\xi}^2}{\left(c_p^2+c_s^2\right)\left(\xi_x^2+\xi_y^2+\xi_z^2 \right)}}, \quad CFL = 0.5. 
\end{align}
Here $c_p, c_s$ are the P-wave and S-wave speeds,  $\xi_x, \xi_y, \xi_z$ are the partial derivatives of $\xi \in \{ q, r, s \}$ with respect to $x,y,z$ respectively, $h_{\xi} = 1/n_\xi$ is the uniform spatial step in the transformed coordinates. 

We will compare and contrast the numerical resolution power of the traditional SBP FD operator,  upwind DP SBP operator and the  recently derived DRP SBP operator for nonlinear dynamic earthquake ruptures composed of discontinuous and nearly singular exact solutions. We note that our numerical solutions have been benchmarked against finite element and spectral element methods downloaded from \cite{SCEC2022}. Please see \cite{CWilliams2021} for details.
    These problems are computationally expensive, we will also demonstrate parallel efficiency and perfect scaling of our parallel implementation and numerical simulation on Gadi
    \footnote{Gadi is the current supercomputer hosted at the National Computational Infrastructure, Australia.  It contains total 3,074 nodes each containing two 24-core Intel Xeon Scalable 'Cascade Lake' processors and 192 Gigabytes of memory. The internal connection is through Mellanox Technologies' InfiniBand technology. The code is compiled on Gadi by Intel Compiler/2019.5.281 and the parallelisation is realised by Intel MPI/2019.5.281.}.
     For the production runs we will use two levels of mesh sizes, $h = 100, 50$ m, with 480 and 2304 MPI-ranks respectively on Gadi. We will briefly describe the TPV10 benchmark problem below, more elaborate descriptions can be found in \cite{SCEC2022}.

    \subsection{TPV10}
    The TPV10 benchmark is a \textbf{60-degree dipping} normal  fault embedded in a homogeneous half-space of isotropic elastic solid, see Figure \eqref{fig:TPV10}. Therefore the fault region is the half-plane cutting through $x=\widetilde{x}(y):= x_0 + y/\tan(60)$, that is ${\Gamma}_F = \{\widetilde{x}(y)\} \times (0, \infty) \times (-\infty, \infty)$.
    We introduce the down-dip direction  $\widetilde{y} = y/\sin(60)$, thus the fault plane is ${\Gamma}_F = \{\left(\widetilde{y}(y), z\right) | (y, z) \in (0, \infty) \times (-\infty, \infty)$.
    The rupturing part of the fault is a $15$ km-by-$30$ km rectangular surface,  ${\Gamma}_r = [0,15] \times [-15,15]$, on the fault plane. Outside ${\Gamma}_r$, the fault is locked and it is not permitted to slip. The $3$ km-by-$3$ km nucleation patch is centered at $12$ km down-dip that is $\Gamma_{\text{nuc}} = [10.5, 13.5] \times [-1.5,1.5]$. 
    
    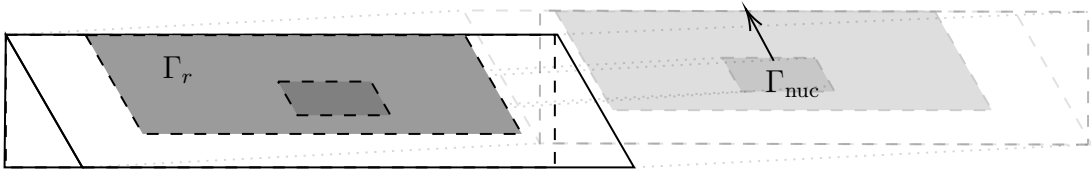
\begin{figure}[h!]
        \centering
        \tikzset{every picture/.style={line width=0.75pt}} 
        
        \begin{tikzpicture}[x=0.75pt,y=0.75pt,yscale=-1,xscale=1]
        
        \draw  [color={rgb, 255:red, 155; green, 155; blue, 155 }  ,draw opacity=0.3 ][dash pattern={on 4.5pt off 4.5pt}] (595,104.5) -- (316.77,103.87) -- (278.24,36.78) -- (556.47,37.41) -- cycle ;
        \draw  [color={rgb, 255:red, 0; green, 0; blue, 0 }  ,draw opacity=0.2 ][fill={rgb, 255:red, 155; green, 155; blue, 155 }  ,fill opacity=0.33 ][dash pattern={on 4.5pt off 4.5pt}] (324.73,37.26) -- (515.97,37.26) -- (544.75,87.11) -- (353.5,87.11) -- cycle ;
        \draw  [color={rgb, 255:red, 0; green, 0; blue, 0 }  ,draw opacity=0.2 ][fill={rgb, 255:red, 155; green, 155; blue, 155 }  ,fill opacity=0.33 ][dash pattern={on 4.5pt off 4.5pt}] (408.72,60.68) -- (455.65,60.68) -- (465.47,77.68) -- (418.53,77.68) -- cycle ;
        \draw  [color={rgb, 255:red, 155; green, 155; blue, 155 }  ,draw opacity=0.73 ][dash pattern={on 4.5pt off 4.5pt}] (593.24,104.49) -- (316.77,103.87) -- (316.92,36.87) -- (593.39,37.49) -- cycle ;
        \draw [color={rgb, 255:red, 0; green, 0; blue, 0 }  ,draw opacity=0.16 ] [dash pattern={on 0.84pt off 2.51pt}]  (194.53,89.68) -- (418.53,77.68) ;
        \draw [color={rgb, 255:red, 0; green, 0; blue, 0 }  ,draw opacity=0.16 ] [dash pattern={on 0.84pt off 2.51pt}]  (231.65,72.68) -- (455.65,60.68) ;
        \draw [color={rgb, 255:red, 0; green, 0; blue, 0 }  ,draw opacity=0.16 ] [dash pattern={on 0.84pt off 2.51pt}]  (241.47,89.68) -- (461,75.13) ;
        \draw [color={rgb, 255:red, 0; green, 0; blue, 0 }  ,draw opacity=0.16 ] [dash pattern={on 0.84pt off 2.51pt}]  (324.36,49) -- (593.39,37.49) ;
        \draw [color={rgb, 255:red, 0; green, 0; blue, 0 }  ,draw opacity=0.16 ] [dash pattern={on 0.84pt off 2.51pt}]  (184.72,72.68) -- (408.72,60.68) ;
        \draw [color={rgb, 255:red, 0; green, 0; blue, 0 }  ,draw opacity=1 ]   (434.74,62.19) -- (421.59,37.77) ;
        \draw [shift={(420.65,36)}, rotate = 61.71] [color={rgb, 255:red, 0; green, 0; blue, 0 }  ,draw opacity=1 ][line width=0.75]    (10.93,-3.29) .. controls (6.95,-1.4) and (3.31,-0.3) .. (0,0) .. controls (3.31,0.3) and (6.95,1.4) .. (10.93,3.29)   ;
        \draw   (47.47,49.1) -- (86.09,116) -- (46.82,116) -- cycle ;
        \draw  [dash pattern={on 4.5pt off 4.5pt}] (47.47,49.1) -- (324.36,49.1) -- (324.36,116) -- (47.47,116) -- cycle ;
        \draw   (47.41,49) -- (325.64,49) -- (364.32,116) -- (86.09,116) -- cycle ;
        \draw  [fill={rgb, 255:red, 155; green, 155; blue, 155 }  ,fill opacity=1 ][dash pattern={on 4.5pt off 4.5pt}] (87.73,49.26) -- (278.97,49.26) -- (307.75,99.11) -- (116.5,99.11) -- cycle ;
        \draw  [fill={rgb, 255:red, 128; green, 128; blue, 128 }  ,fill opacity=1 ][dash pattern={on 4.5pt off 4.5pt}] (184.72,72.68) -- (231.65,72.68) -- (241.47,89.68) -- (194.53,89.68) -- cycle ;
        \draw [color={rgb, 255:red, 0; green, 0; blue, 0 }  ,draw opacity=0.16 ] [dash pattern={on 0.84pt off 2.51pt}]  (86.09,116) -- (316.77,103.87) ;
        \draw [color={rgb, 255:red, 0; green, 0; blue, 0 }  ,draw opacity=0.16 ] [dash pattern={on 0.84pt off 2.51pt}]  (364.32,116) -- (593.24,104.49) ;
        \draw [color={rgb, 255:red, 0; green, 0; blue, 0 }  ,draw opacity=0.16 ] [dash pattern={on 0.84pt off 2.51pt}]  (47.47,49.1) -- (278.24,36.78) ;
        
        \draw (125,58.53) node [anchor=north west][inner sep=0.75pt]    {$\Gamma _{r}$};
        \draw (429,65.53) node [anchor=north west][inner sep=0.75pt]    {$\Gamma _{\text{nuc}}$};
        
        \end{tikzpicture}
        \caption{A \textbf{60-degree dipping} fault plane for TPV10.}
        \label{fig:TPV10}
    \end{figure}
    
    The fault obeys the slip-weakening friction law, with the fault strength
    $\tau = C_0 + f(S) \sigma_n$ where $\sigma_n$ is the compressive-norm-stress,  $f(S)$ is the friction coefficient  given by \eqref{eq:slip-weakening} and $C_0$[MPa] is the cohesion. 
    The friction parameters are giving in Table   \ref{tab:friction_tpv10}.
    \begin{table}[h!]
\centering
\begin{tabular}{c| c |  c | c | c }
%
{} & $f_s$ & $f_d$&  $d_c$[m] & $C_0$[MPa]  \\
\hline
$\Gamma_{r}$ & 0.76& 0.448& 0.5 & 0.2 \\
\hline
$\Gamma_{F} \setminus \Gamma_{r}$ &10000& 0.448& 0.5 & 1000  \\
\hline
\end{tabular}
\caption{Friction parameters.}
\label{tab:friction_tpv10}
\end{table}
The initial stresses on the fault are given in Table \ref{tab:init_stress}.
 \begin{table}[h!]
\centering
\begin{tabular}{c|  c| c | c   }
%
{} &  $T_{0n}$[MPa] & $T_{0m}$[MPa] & $T_{0l}$[MPa] 
\\
\hline
$\Gamma_{\text{nuc}}$  & $-7.387\times\widetilde{y}$ & $(f_s+0.00057)\times\sigma_{0n} + C_0 $ & 0 \\
\hline
$\Gamma_{F}\setminus\Gamma_{\text{nuc}}$   & $-7.387\times\widetilde{y}$ & $0.55\times\sigma_{0n}$ & 0\\
\hline
\end{tabular}
\caption{Initial stress.}
\label{tab:init_stress}
\end{table}
Here $T_{0n}$ is the initial normal-stress, $T_{0m}$ is the initial  shear-stress along dip, $T_{0l}$ is the initial  shear-stress along strike and the initial compressive normal-stress is $\sigma_{0n} = -T_{0n}$. 
At the nucleation patch $\Gamma_{\text{nuc}}$ the initial shear-stress exceeds the fault's peak strength, $\tau_0 > \tau_{p}$. The initiation of rupture is instantaneous at $t>0$.
  Note that on the fault plane $\Gamma_F$ but outside the $15$ km -by- $30$ km faulting area, that is in $\Gamma_{F} \setminus \Gamma_{r}$, there is a strength barrier that makes it impossible for the fault to slip. The strength barrier is established by the very large values for the cohesion $C_0 = 1000$ MPa and the static coefficient of friction $f_s = 10000$, given in Table \ref{tab:friction_tpv10}.

\subsection{Numerical simulations}   
To perform  simulations we truncate the domain and consider the finite computational domain $\Omega$ with
    \begin{align}
       \Omega=\Omega^{-}\cup \Omega^{+}, && \Omega^{-} = [-20,\widetilde{x}] \times [0,20] \times [-20,20], && \Omega^{+} = [\widetilde{x}, 20] \times [0,20] \times [-20,20].
    \end{align}
The sub-blocks $\Omega^{-}$ and  $\Omega^{+}$ of isotropic elastic solid are glued together by friction.
The material properties are homogeneous throughout the domain  with the following properties,
    \begin{align}
        c_p =5.716 ~\ \text{km/s}, && c_s = 3.300 ~\ \text{km/s}, && \rho = 2.700 ~\ \text{g/cm$^3$} .
    \end{align}
At the boundary on the Earth's surface,  $y = 0$, we impose the free-surface boundary condition by setting the traction vector to zero.  The artificial boundaries at $x = \pm 20$, $y = 20$ and $z = \pm 20$ are surrounded by the PML \cite{DuruKozdonKreiss2016,DuruRannabauerGabrielKreissBader2019} of sufficiently small width of  $1.2$ km  to absorb outgoing waves. Effective numerical treatment of the PML in the presence of nonlinear frictional sources is a topic of future work.

 A boundary conforming curvilinear mesh, obeying the fault topography, is generated for the sub-blocks $\Omega^{-}$ and $\Omega^{+}$.   Note that because of the dipping fault plane, the computational mesh is highly skewed.
 Numerical approximations are perform using the 6th order accurate traditional SBP operator, and upwind DP and DRP  SBP operators with order of accuracy $4$, $5$, $6$ and $7$. 
We will probe the solutions at a receiver station placed at $(7.5,12)$ on the fault plane and which is $13.2$ km from the hypo-centre  $(12,0)$.
 
We run the simulations until the final time $T = 15$ s.  Snapshots of the slip-rate on the fault plane are displayed in Figure \ref{fig:snap_shots_sliprate}, showing the evolution of the rupture on the fault plane, $\Gamma_r$.
Comparisons of the time history of the slip-rate at the receiver station $(7.5, 12)$ are shown in  Figure \ref{fig:slip_rate_100m_50m}, at $h = 100$ m and $h = 50$ m spatial step sizes.  
In Figure \ref{fig:snap_shots_sliprate_comparison}, we compare the slip-rate on the entire fault plane at $t =5.98$ s.
Similarly, comparisons of the time history of the shear-stress at the receiver station $(7.5, 12)$ are shown in  Figure \ref{fig:shear_stress_100m_50m}, at $h = 100$ m and $h = 50$ m spatial step sizes.
\begin{figure}[h!]
    \centering
    \includegraphics[width=0.24\textwidth]{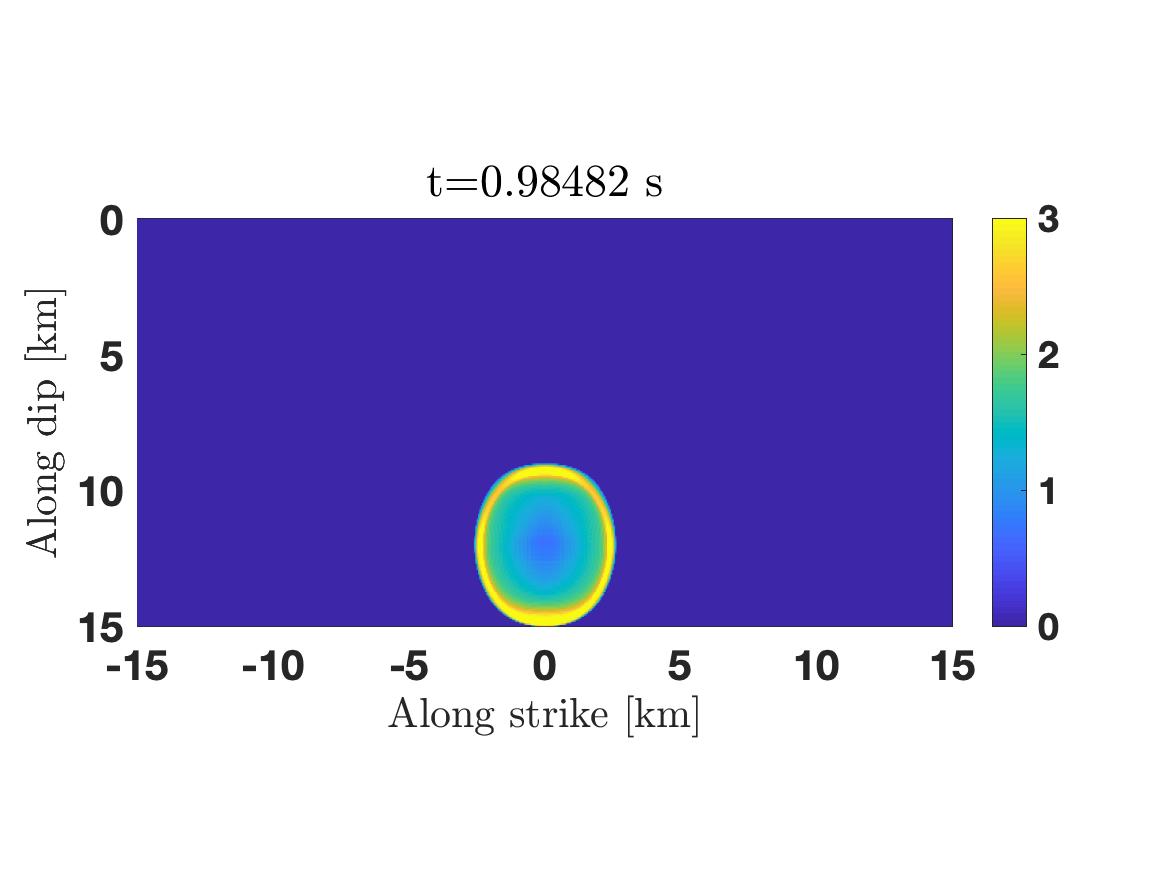}
    \includegraphics[width=0.24\textwidth]{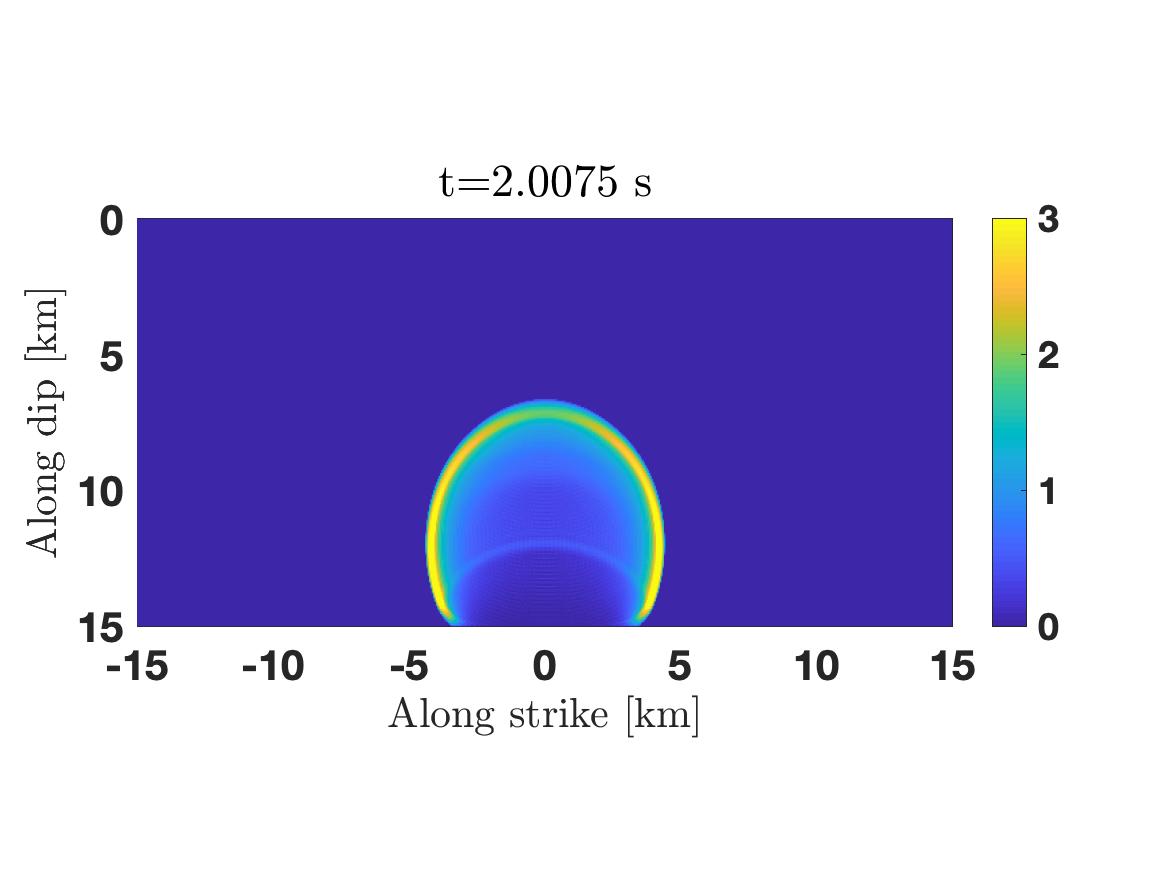}
    \includegraphics[width=0.24\textwidth]{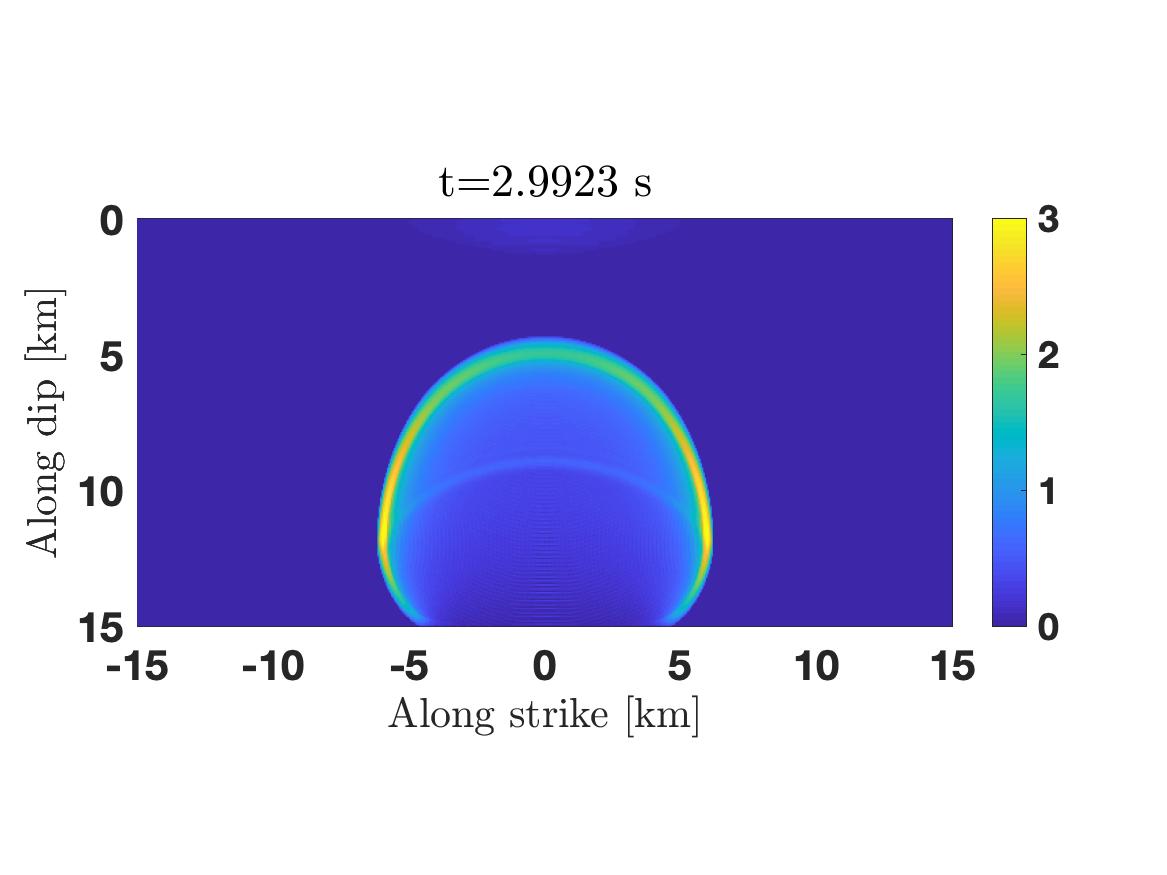}
    \includegraphics[width=0.24\textwidth]{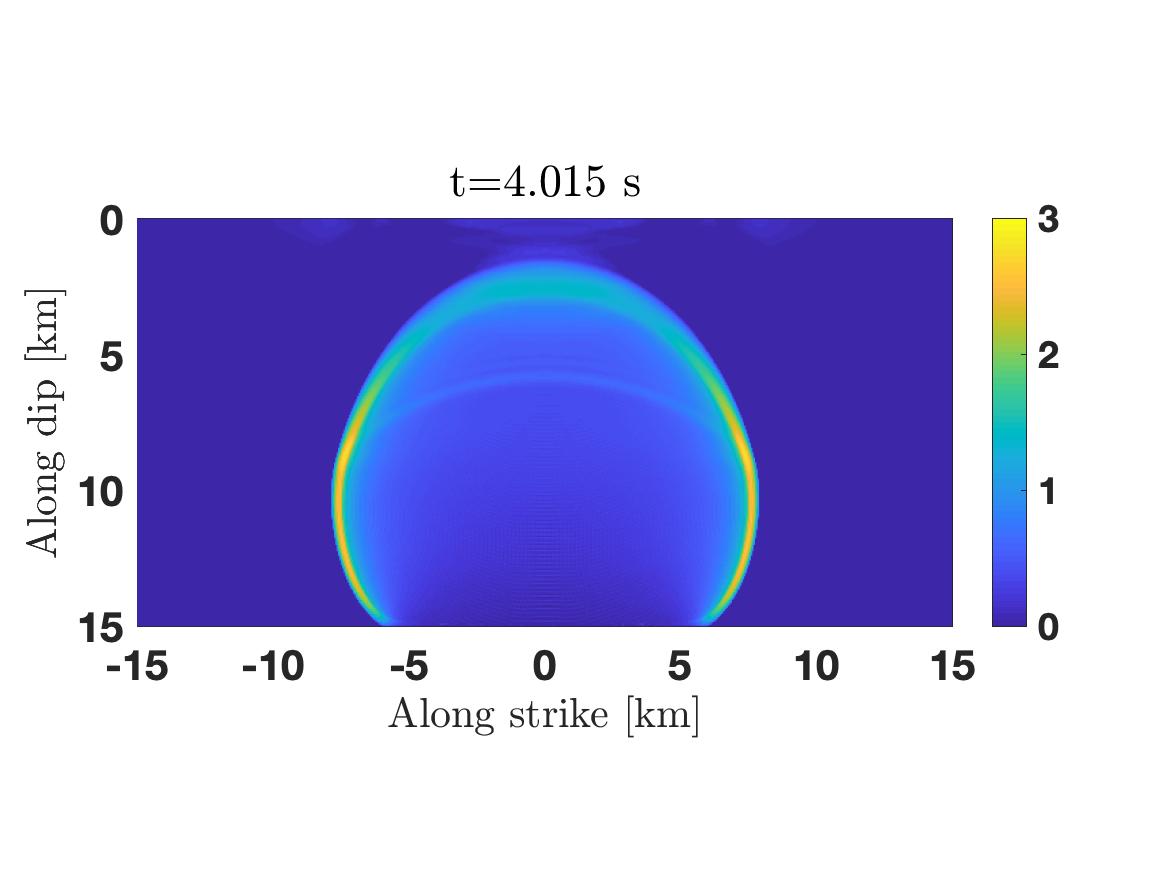}
    \includegraphics[width=0.24\textwidth]{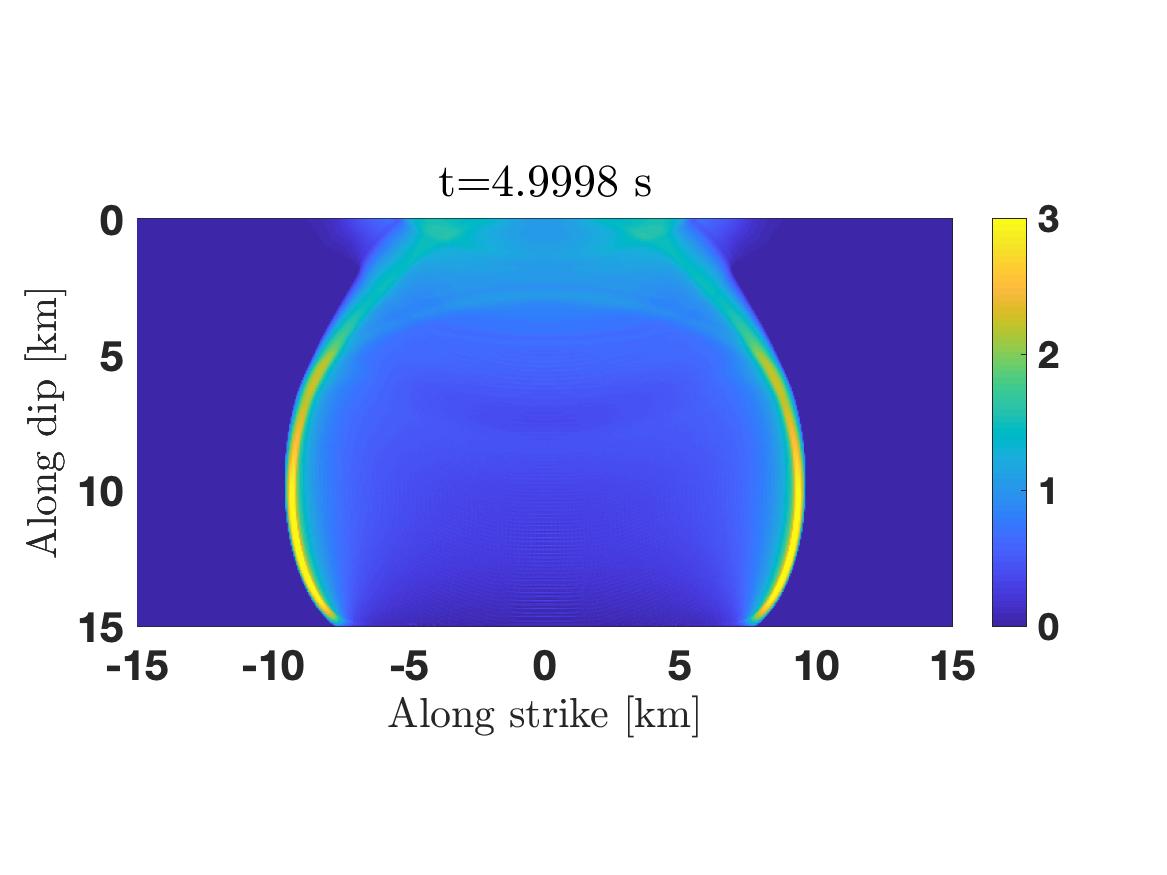}
    \includegraphics[width=0.24\textwidth]{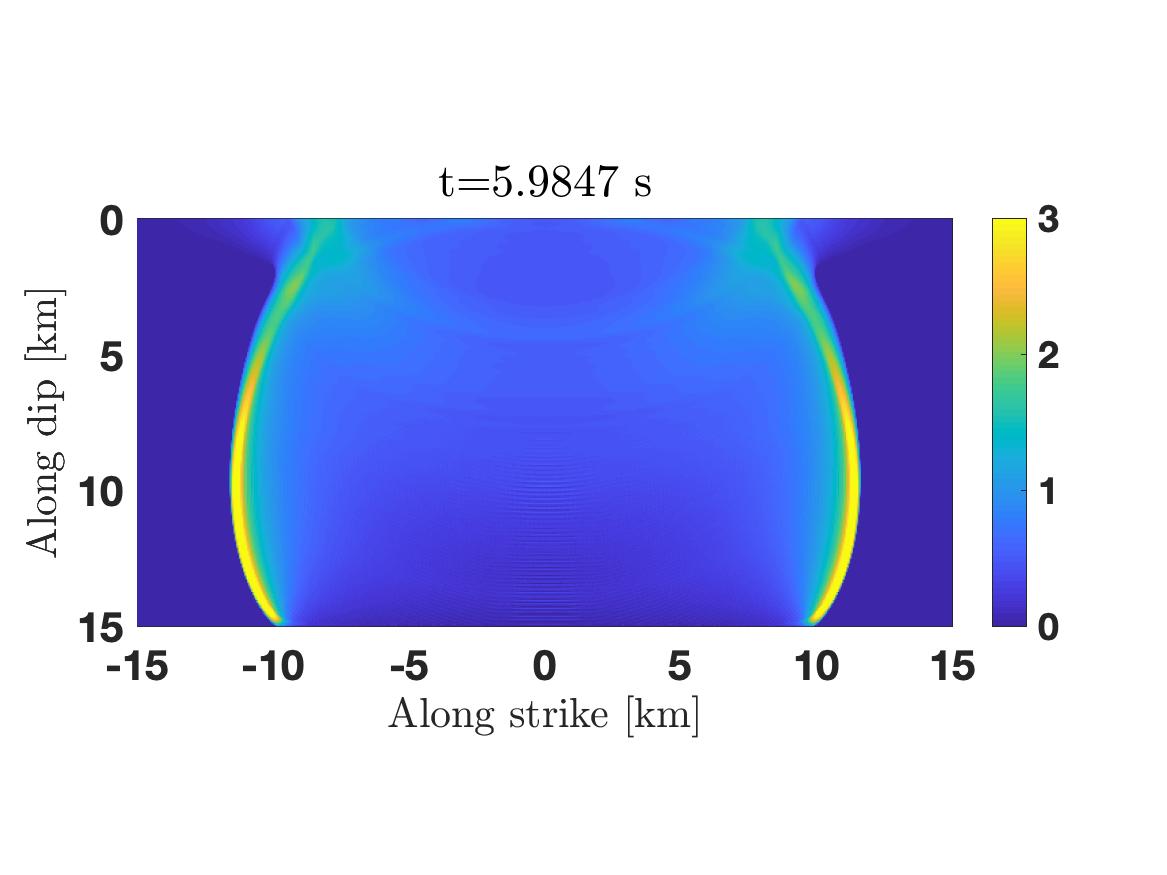}
    \includegraphics[width=0.24\textwidth]{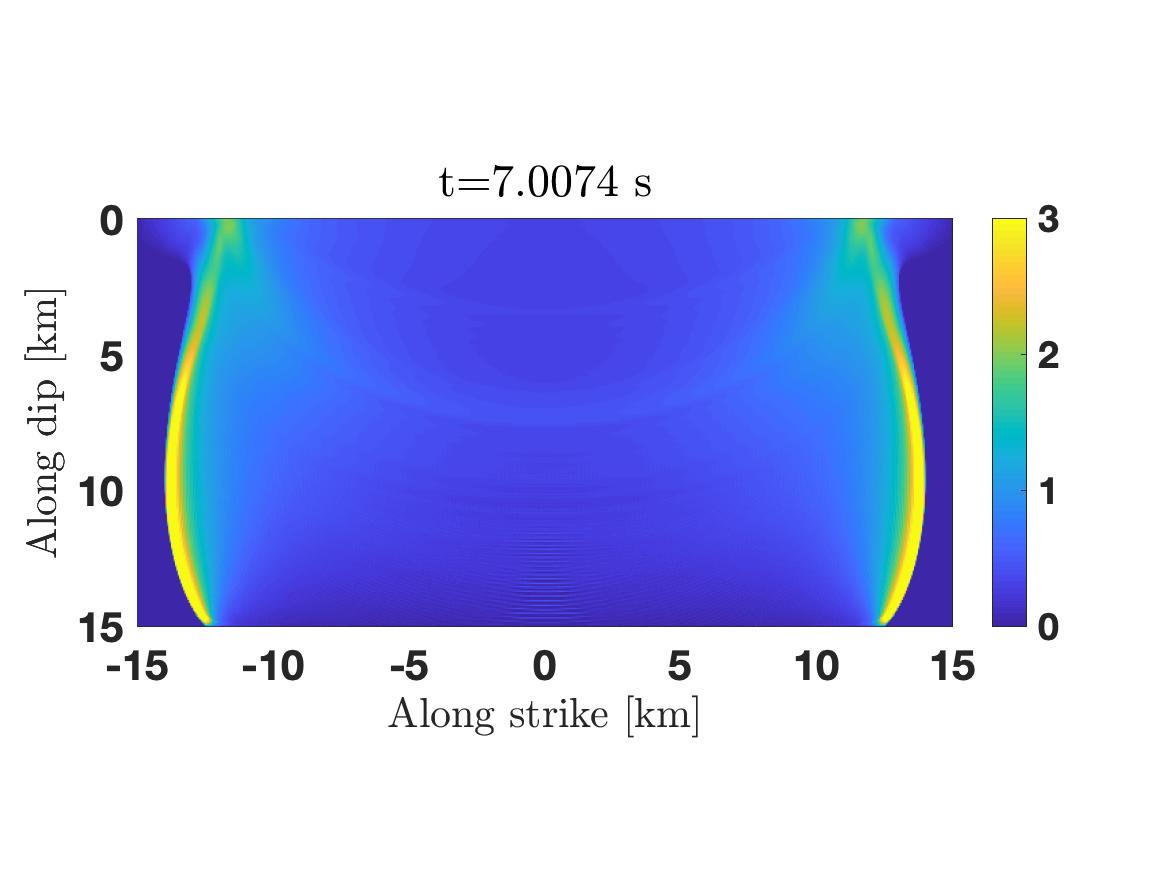}
    \includegraphics[width=0.24\textwidth]{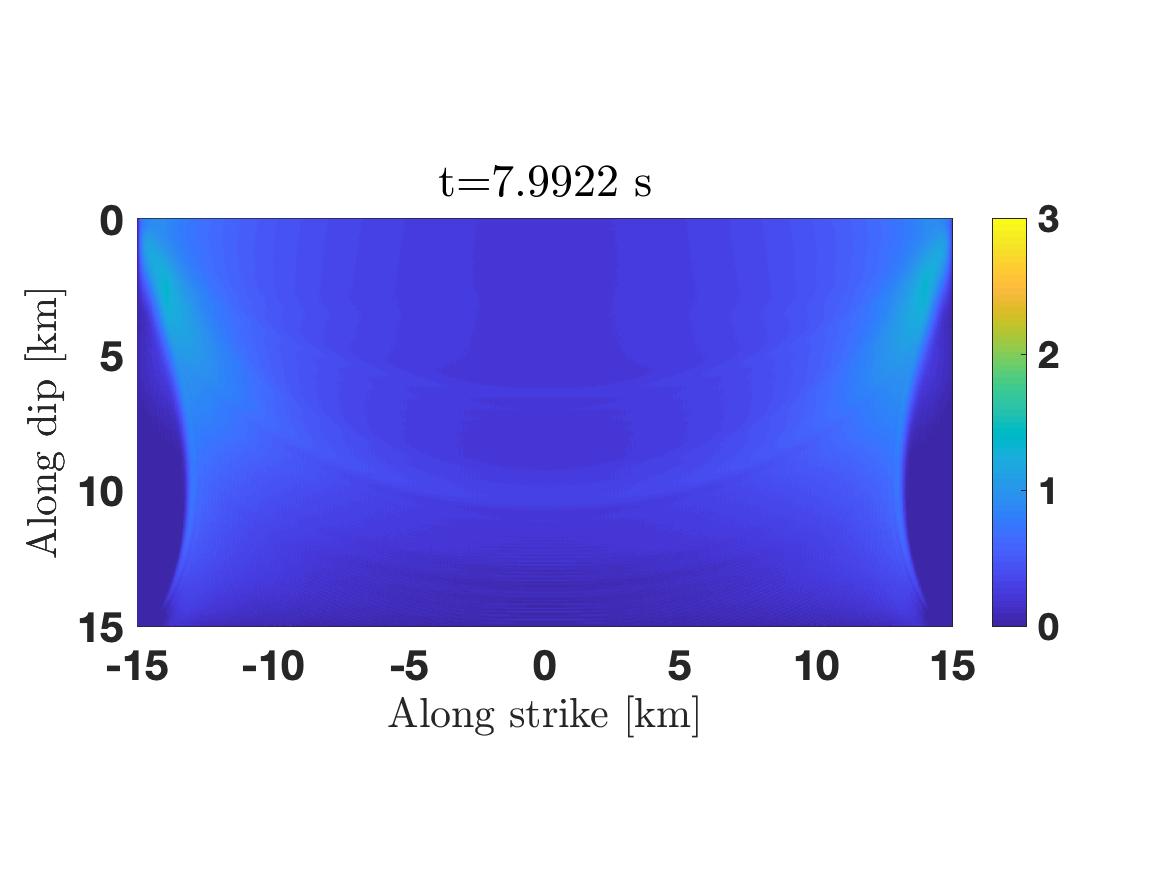}
    \caption{Snapshots of the slip-rate on  a dipping fault plane   at $t \in \{0.98, 2.0, 3.0, 4.0, 5.0, 6.0, 7.0, 7.99\}$ seconds. The simulation is computed with the DP SBP operator of order $4$ at h = 50 m grid spacing. } 
    \label{fig:snap_shots_sliprate}
\end{figure}

\begin{figure}[h!]
\centering
{
 \stackunder[5pt]{\includegraphics[width=8cm]{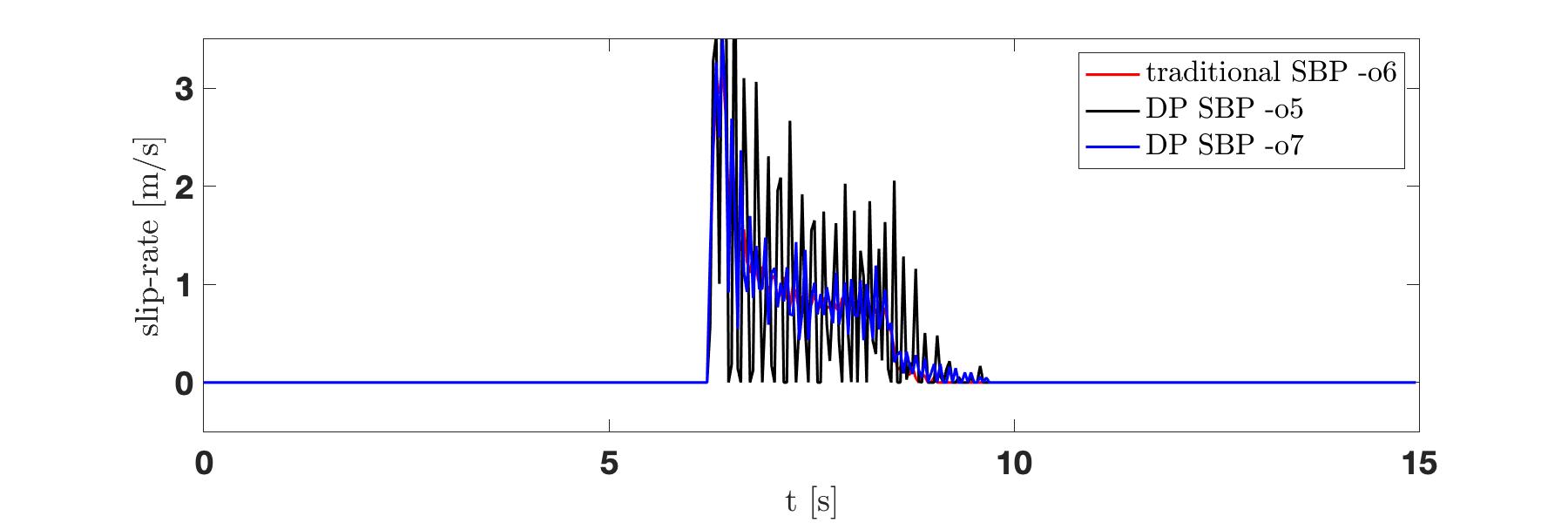}}{\text{DP odd order}}
\stackunder[5pt]{\includegraphics[width=8cm]{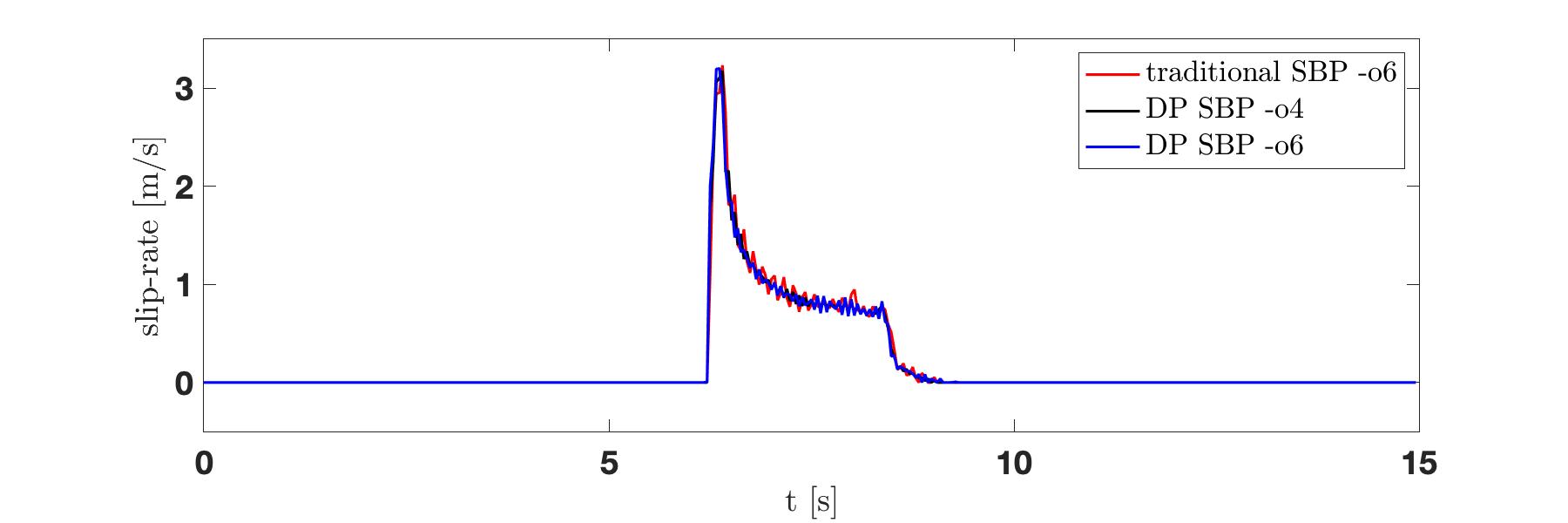}}{\text{DP even order}}
\stackunder[5pt]{\includegraphics[width=8cm]{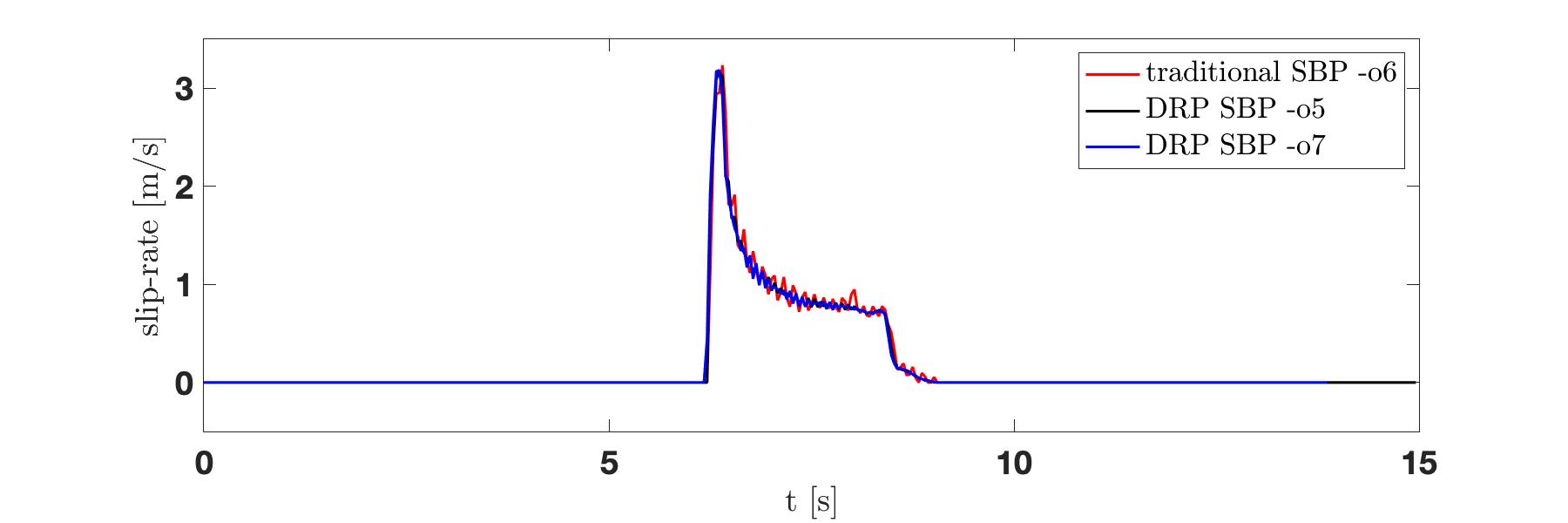}}{\text{DRP odd order}}
\stackunder[5pt]{\includegraphics[width=8cm]{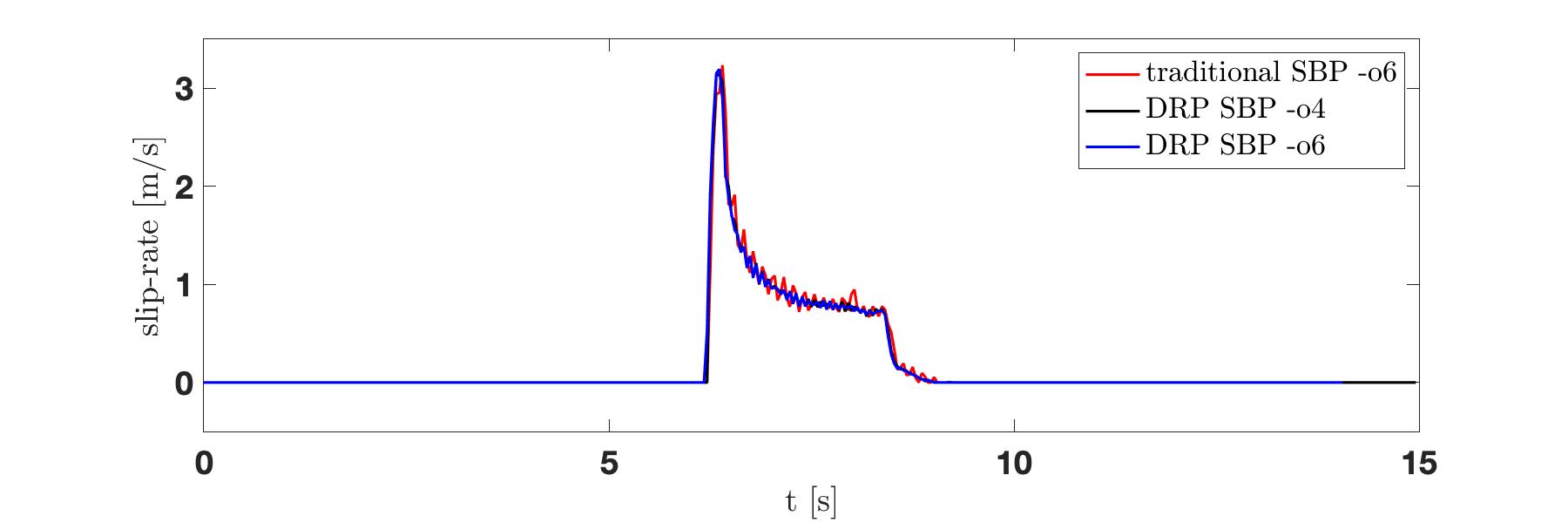}}{\text{DRP even order}}
\renewcommand{\thefigure}{6a}
\caption{Time-series of the vertical slip-rate at $h = 100$ m grid spacing for an on-fault station placed at $(7.5,12)$.}
 \stackunder[5pt]{\includegraphics[width=8cm]{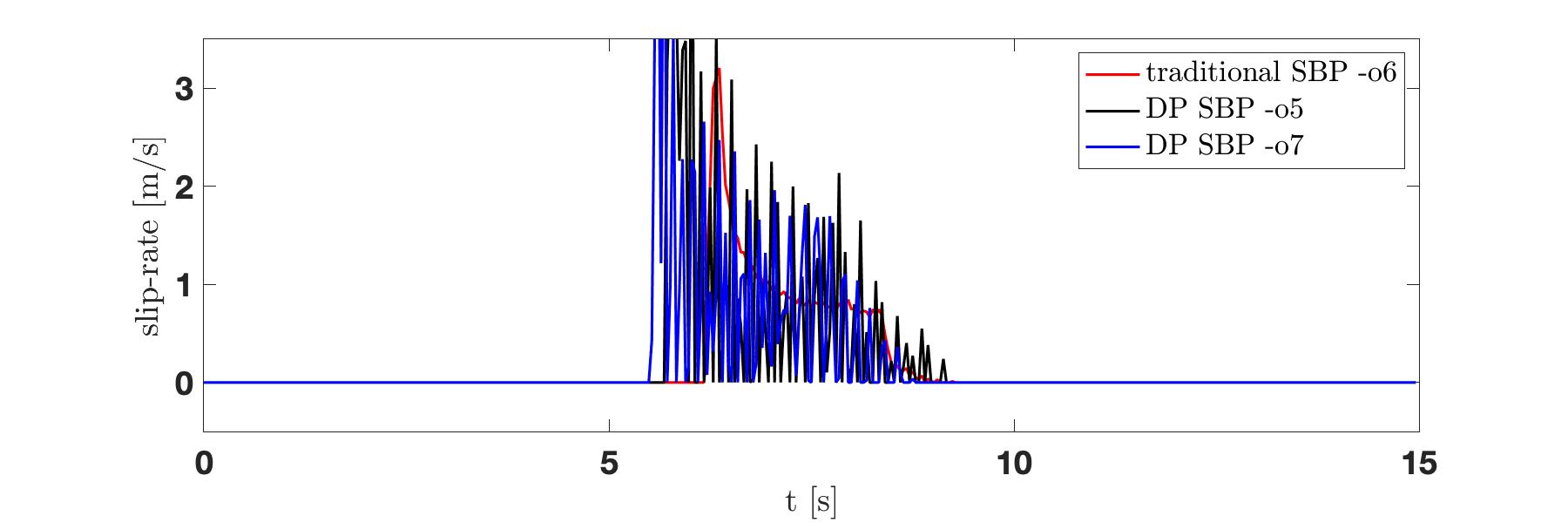}}{\text{DP odd order}}
\stackunder[5pt]{\includegraphics[width=8cm]{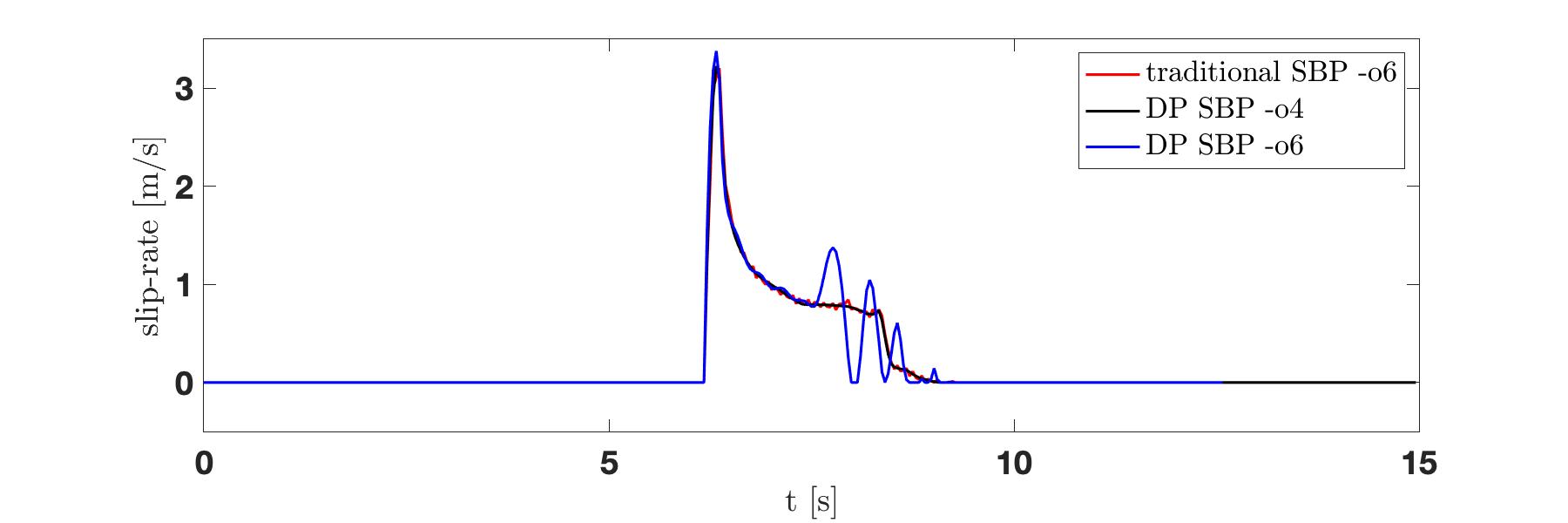}}{\text{DP even order}}
\stackunder[5pt]{\includegraphics[width=8cm]{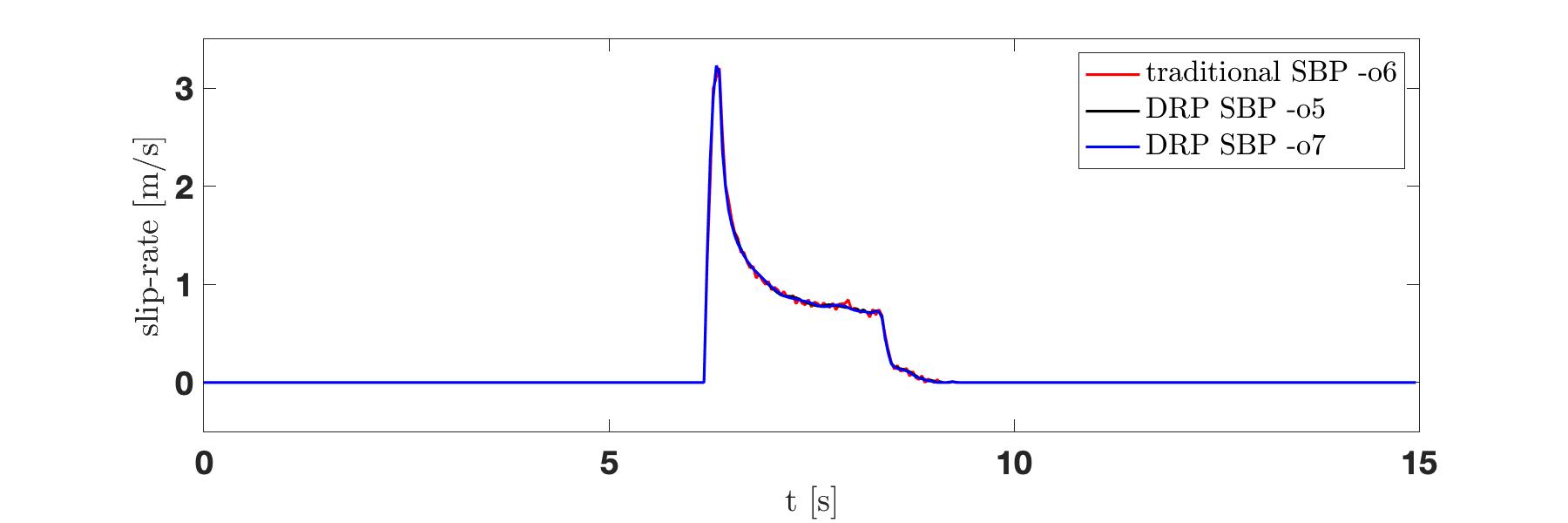}}{\text{DRP odd order}}
\stackunder[5pt]{\includegraphics[width=8cm]{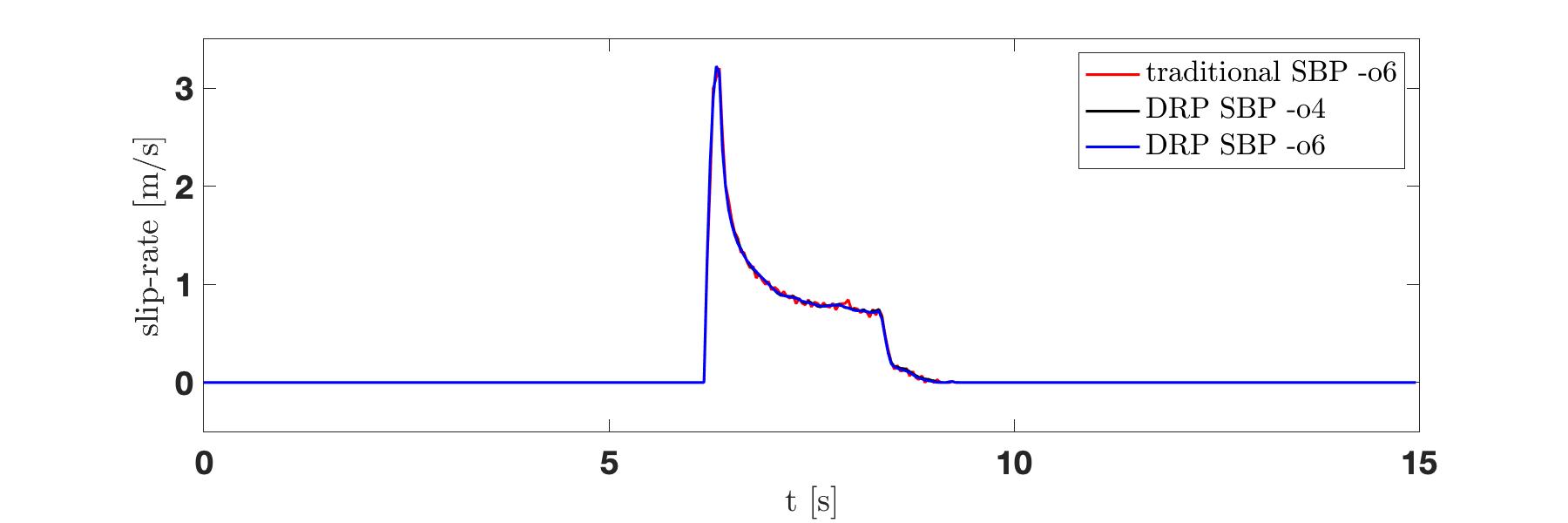}}{\text{DRP even order}}
\renewcommand{\thefigure}{6b}
\caption{Time-series of the vertical slip-rate at $h = 50$ m grid spacing for an on-fault station placed at $(7.5,12)$.}
}
\addtocounter{figure}{-2}
\caption{Comparisons of the slip-rate at at two levels of mesh refinements, $h = 100, 50$ m. The simulations are performed with the traditional SBP FD operator of order $6$,  the upwind DP and DRP SBP FD operators of order $4, 5, 6 7$. Note that at $h=100$ m mesh resolution, even (4 and 6) order DP SBP operators support minimal high frequency errors while the odd  (5 and 7) order DP SBP operators amplify high frequency numerical errors. The DRP SBP operators support much less  than the traditional SBP operator. With mesh refinement $h = 50$ m, the 4th order upwind DP SBP operator, the traditional SBP FD operator, and all DRP SBP operators compute convergent numerical solutions. For odd  (5 and 7) order DP SBP operators, mesh refinement   amplifies further high frequency numerical artefacts while the 6th order DP SBP operator generates a low frequency error.}
\label{fig:slip_rate_100m_50m}
\end{figure}

\begin{figure}[h!]
\centering
{
 \stackunder[5pt]{\includegraphics[width=8cm]{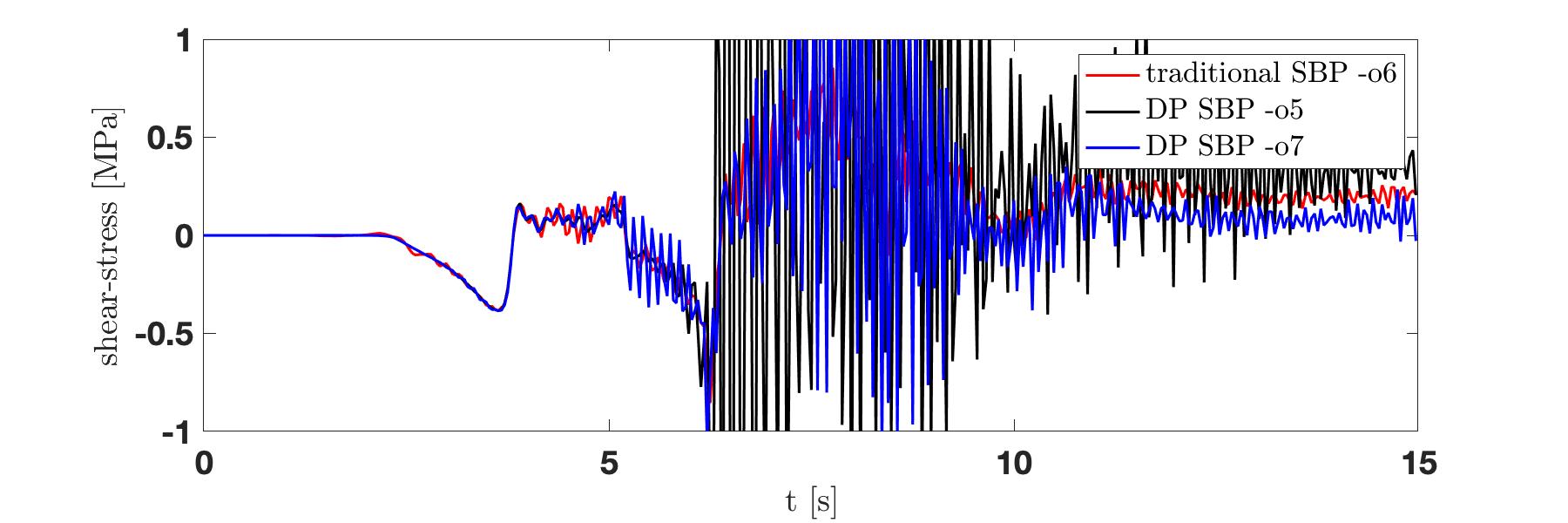}}{\text{DP odd order}}
\stackunder[5pt]{\includegraphics[width=8cm]{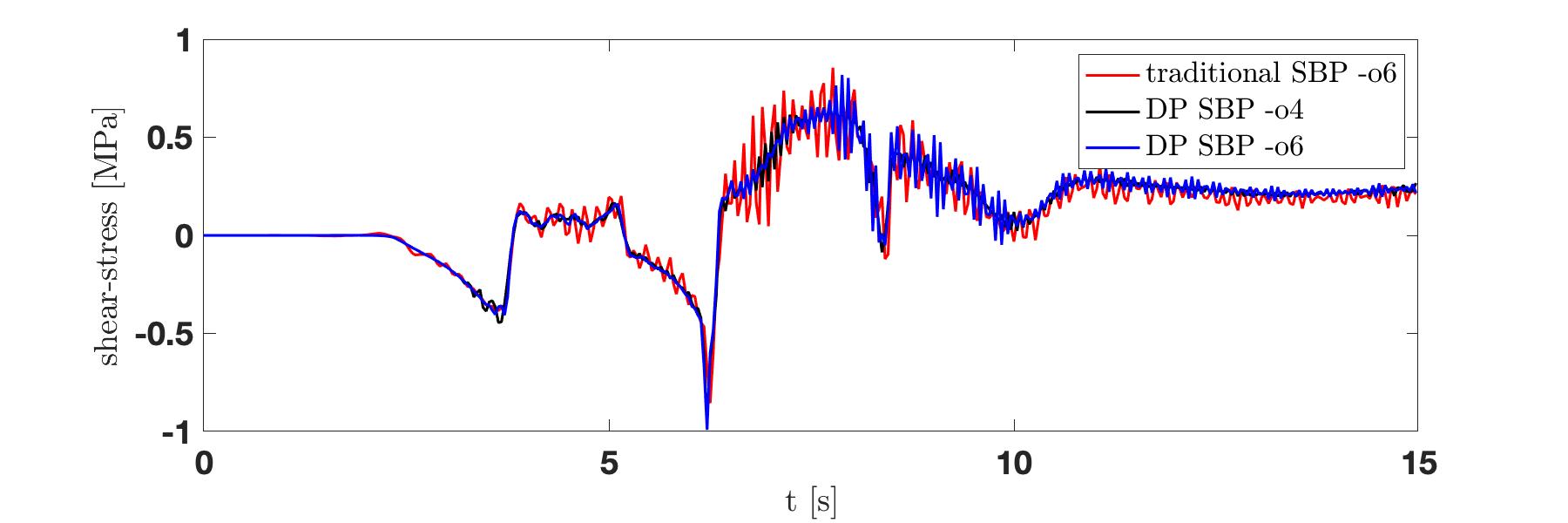}}{\text{DP even order}}
\stackunder[5pt]{\includegraphics[width=8cm]{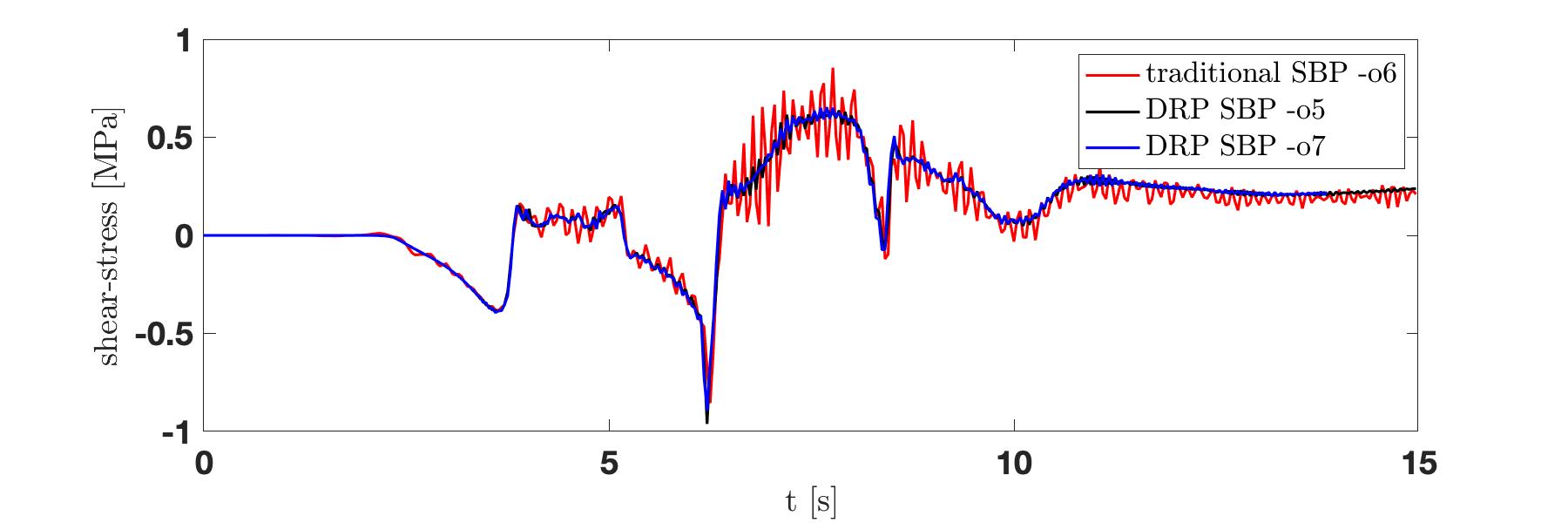}}{\text{DRP odd order}}
\stackunder[5pt]{\includegraphics[width=8cm]{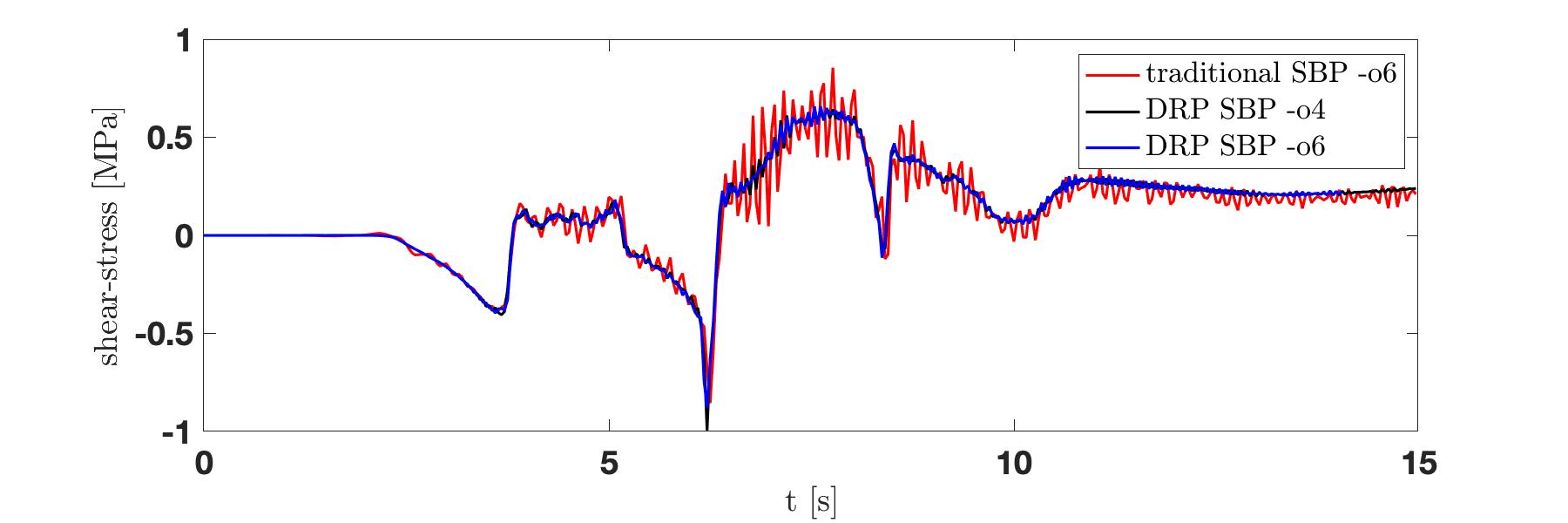}}{\text{DRP even order}}
\renewcommand{\thefigure}{7a}
\caption{Time-series of the horizontal shear-stress at $h = 100$ m grid spacing for an on-fault placed at $(7.5,12)$.}
 \stackunder[5pt]{\includegraphics[width=8cm]{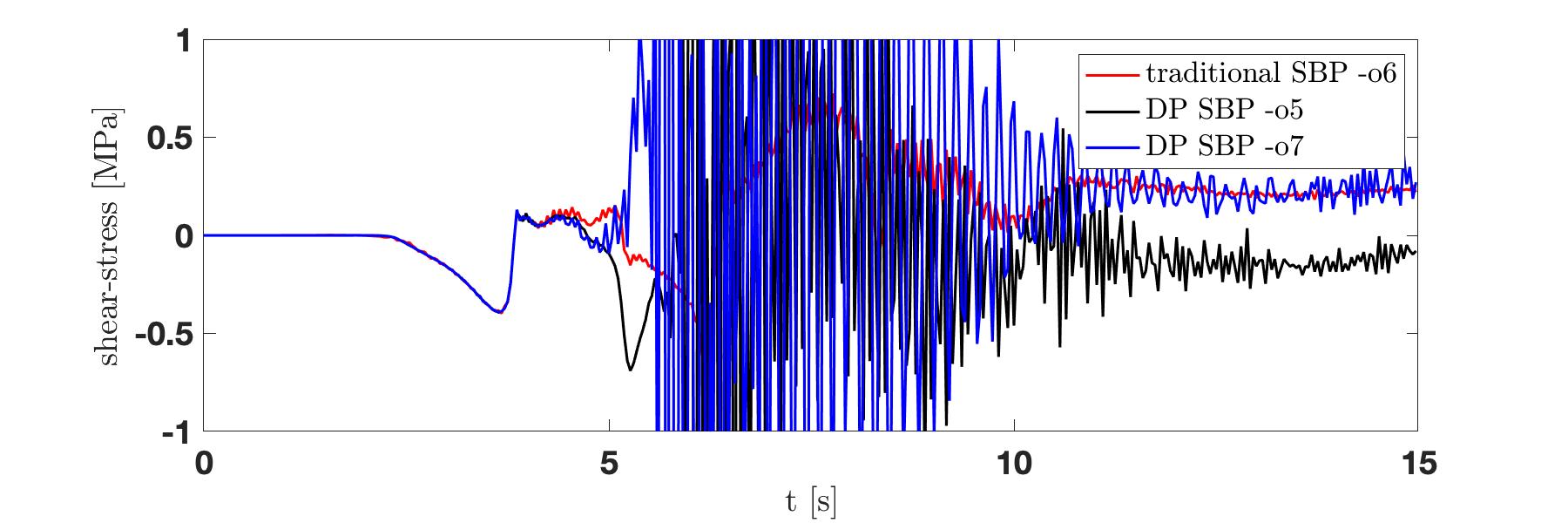}}{\text{DP odd order}}
\stackunder[5pt]{\includegraphics[width=8cm]{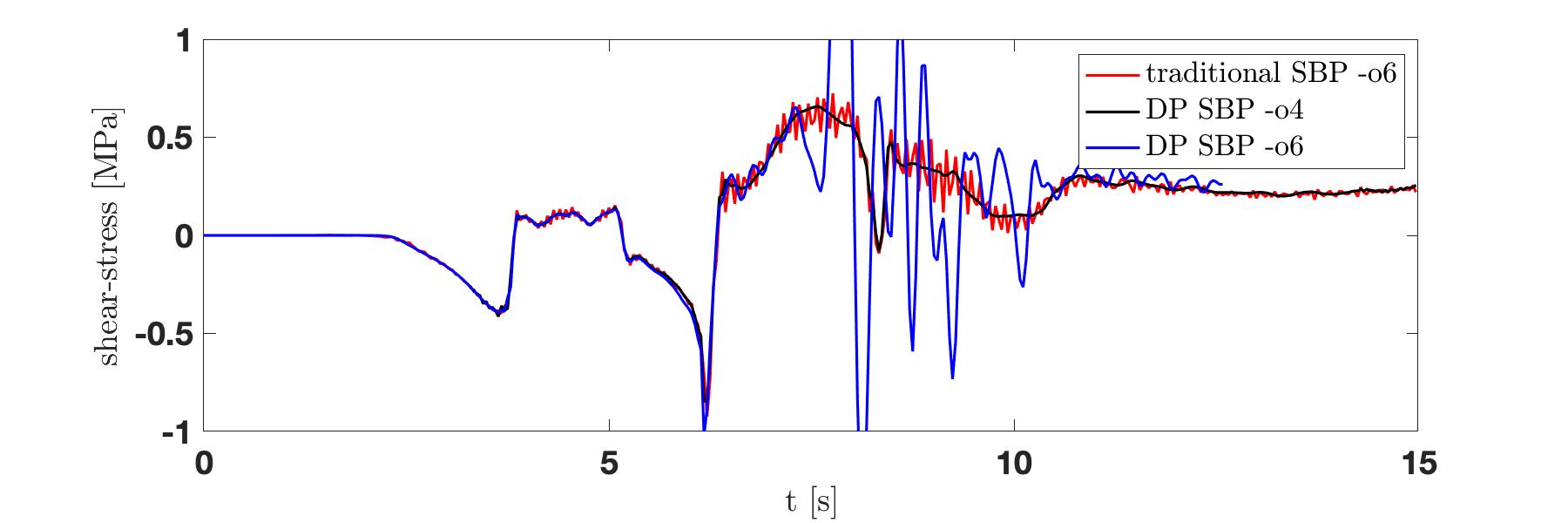}}{\text{DP even order}}
\stackunder[5pt]{\includegraphics[width=8cm]{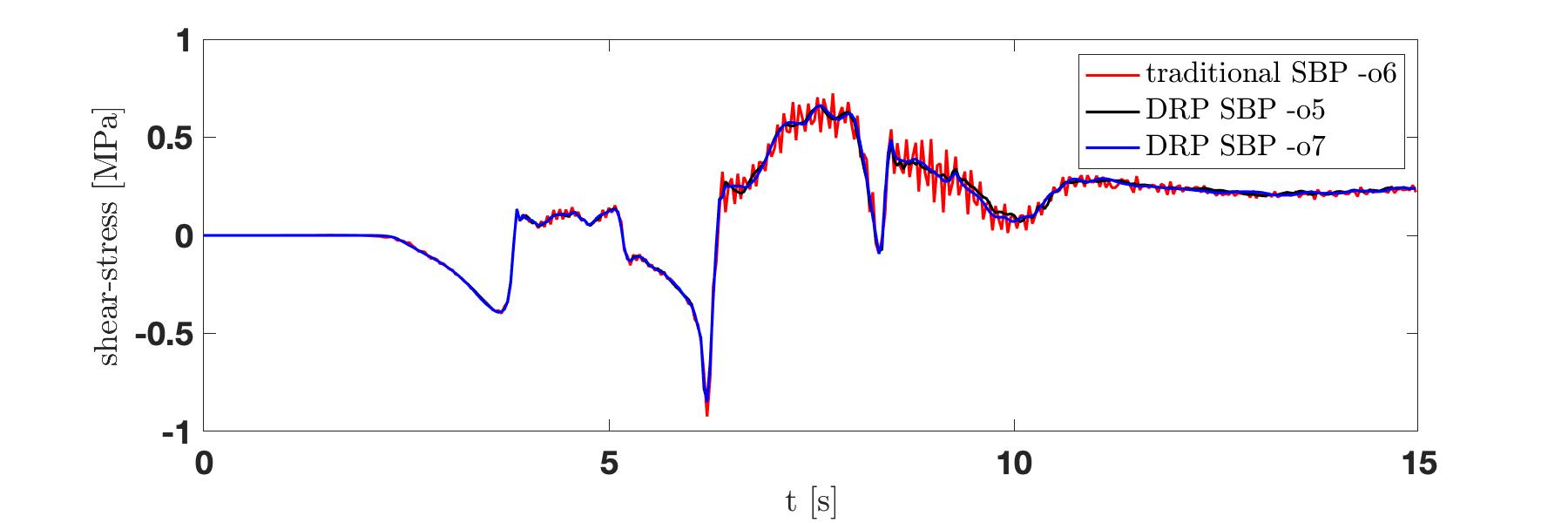}}{\text{DRP odd order}}
\stackunder[5pt]{\includegraphics[width=8cm]{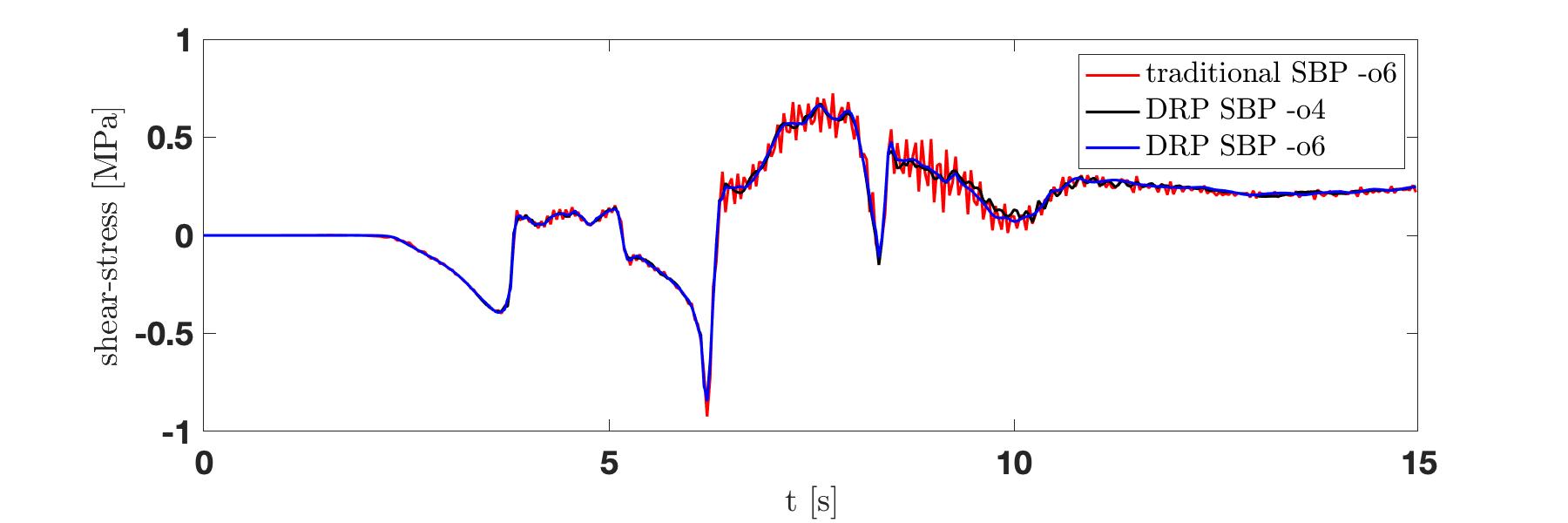}}{\text{DRP even order}}
\renewcommand{\thefigure}{7b}
\caption{Time-series of the horizontal shear-stress at $h = 50$ m grid spacing for an on-fault placed at $(7.5,12)$.}
}
\addtocounter{figure}{-2}
\caption{Comparisons of the shear-stress at at two levels of mesh refinements, $h = 100, 50$ m. The simulations are performed with the traditional SBP FD operator of order $6$,  the upwind DP and DRP SBP FD operators of order $4, 5, 6 7$. Note that at $h=100$ m mesh resolution, even (4 and 6) order DP SBP operators support minimal high frequency errors while the odd  (5 and 7) order DP SBP operators amplify high frequency numerical errors. The DRP SBP operators  have much less spurious oscillations than the traditional SBP operator. With mesh refinement $h = 50$ m, the 4th order upwind DP SBP operator, the traditional SBP FD operator, and all DRP SBP operators compute convergent numerical solutions. For odd  (5 and 7) order DP SBP operators, mesh refinement catastrophically amplifies  high frequency numerical artefacts while the 6th order DP SBP operator generates low frequency errors.}
\label{fig:shear_stress_100m_50m}
\end{figure}
\begin{figure}[h!]
\centering
 {
 \stackunder[5pt]{\includegraphics[width=0.275\textwidth]{tpv10_upwind4_sliprate5_9847s.jpg}}{$4$\text{th order}}%
\hspace{-0.725cm}%
\stackunder[5pt]{\includegraphics[width=0.275\textwidth]{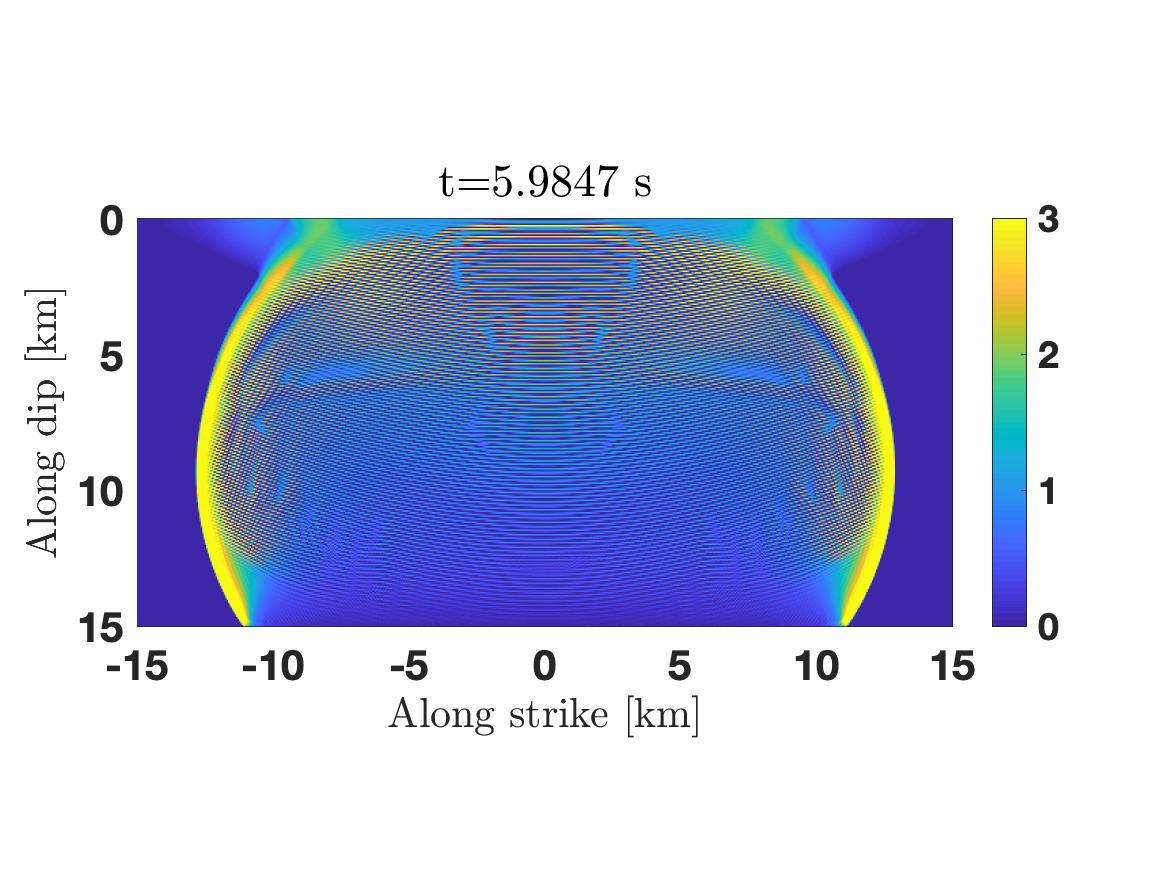}}{$5$\text{th order}}%
\hspace{-0.725cm}%
\stackunder[5pt]{\includegraphics[width=0.275\textwidth]{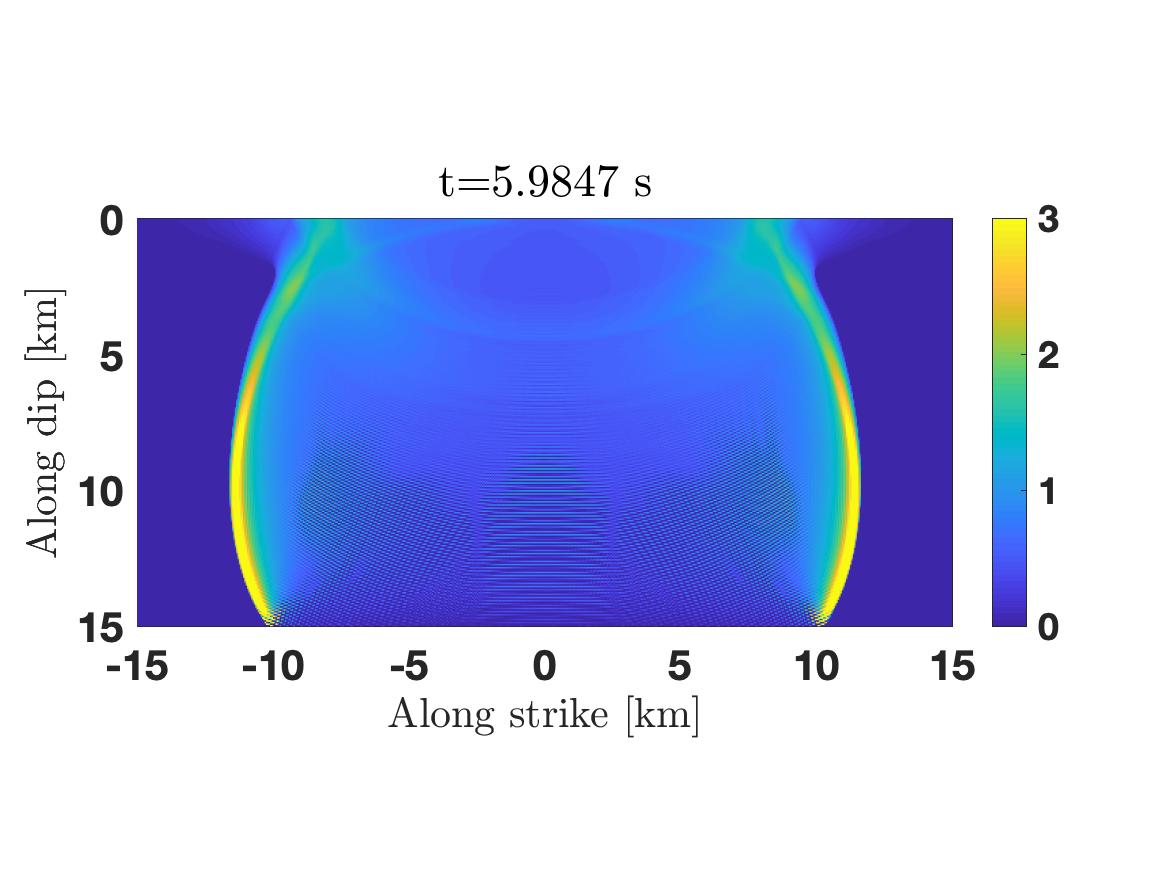}}{$6$\text{th order}}%
\hspace{-0.725cm}%
\stackunder[5pt]{\includegraphics[width=0.275\textwidth]{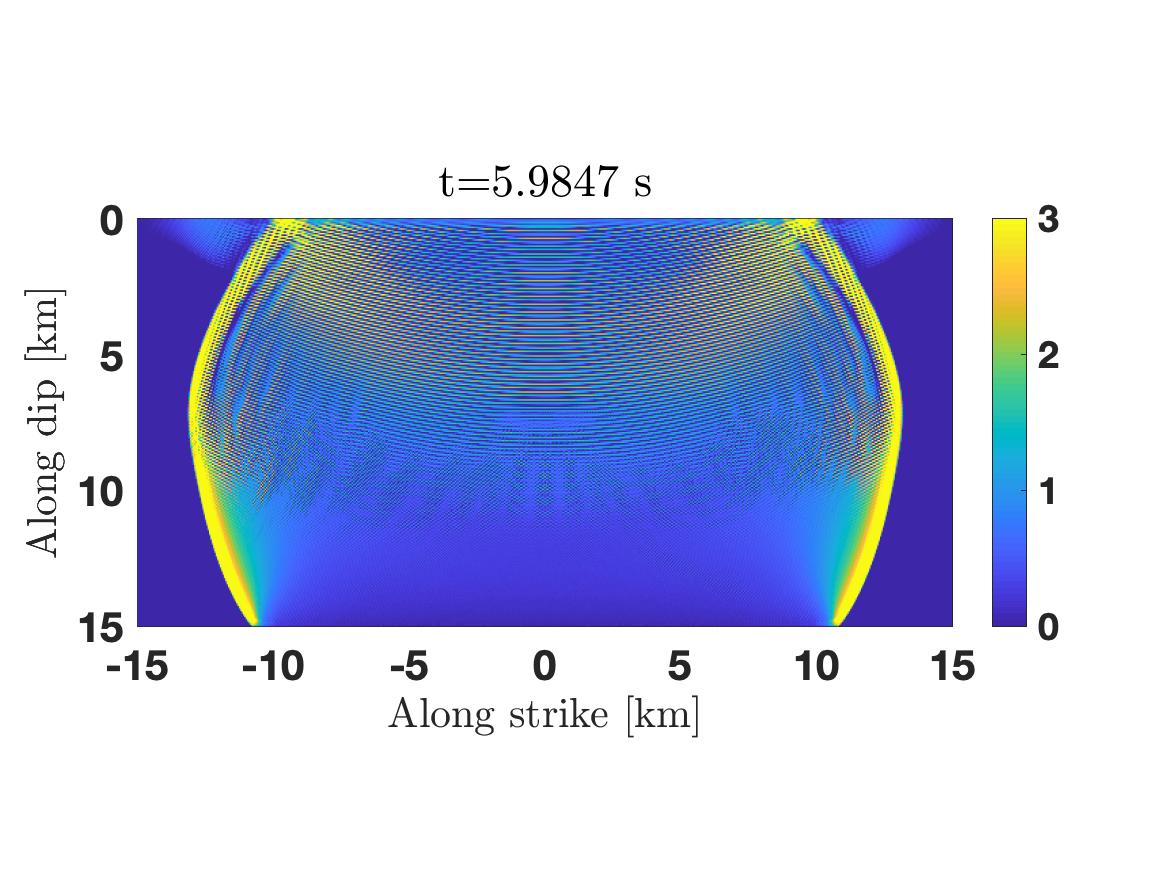}}{$7$\text{th order}}%
\renewcommand{\thefigure}{8a}
\caption{DP SBP operators.}
\stackunder[5pt]{\includegraphics[width=0.275\textwidth]{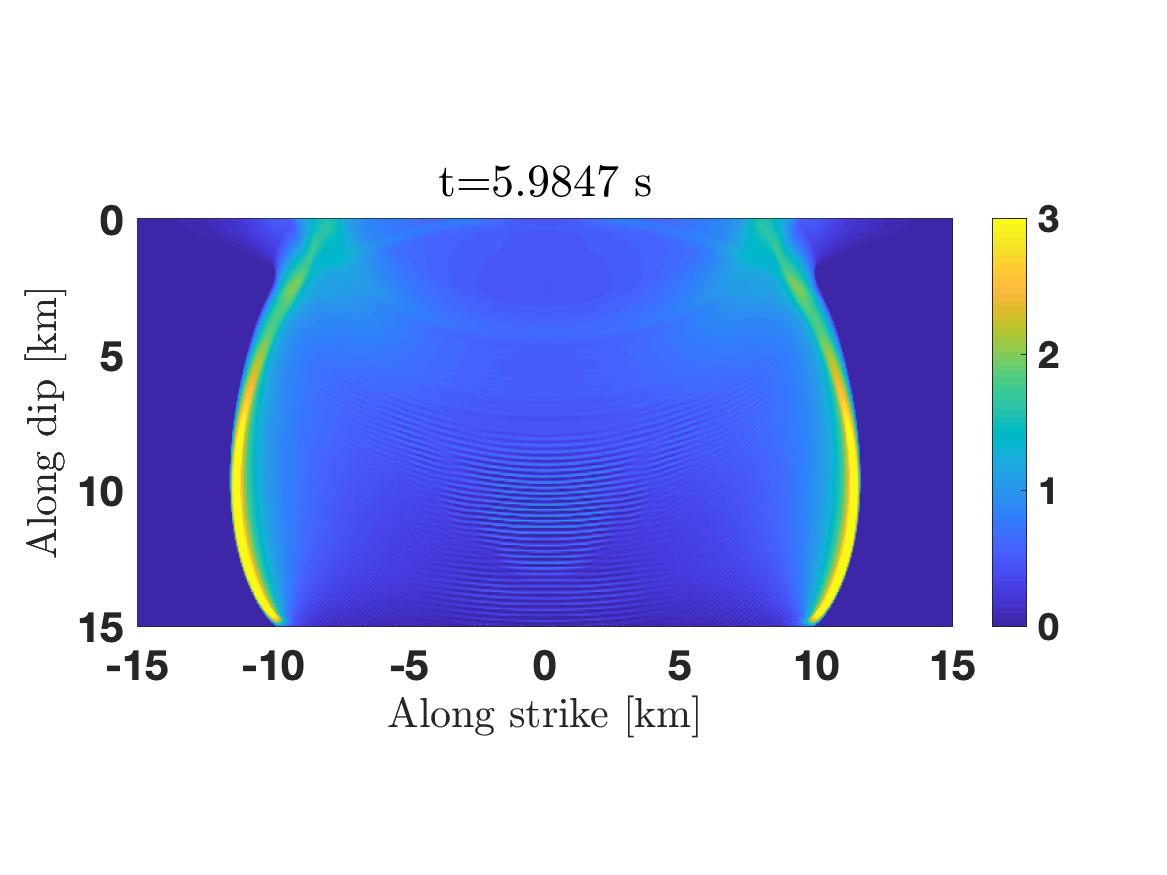}}{$4$\text{th order}}%
\hspace{-0.725cm}%
 \stackunder[5pt]{\includegraphics[width=0.275\textwidth]{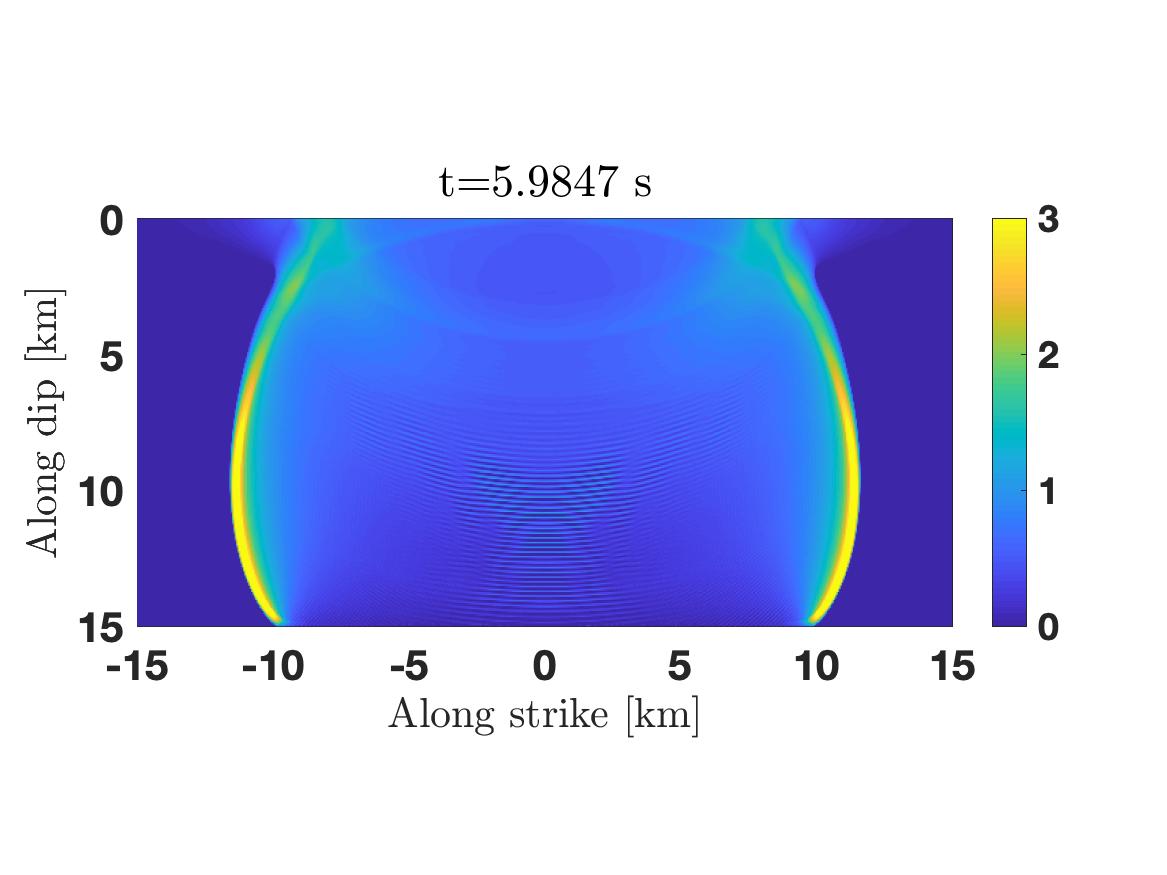}}{$5$\text{th order}}%
\hspace{-0.725cm}%
\stackunder[5pt]{\includegraphics[width=0.275\textwidth]{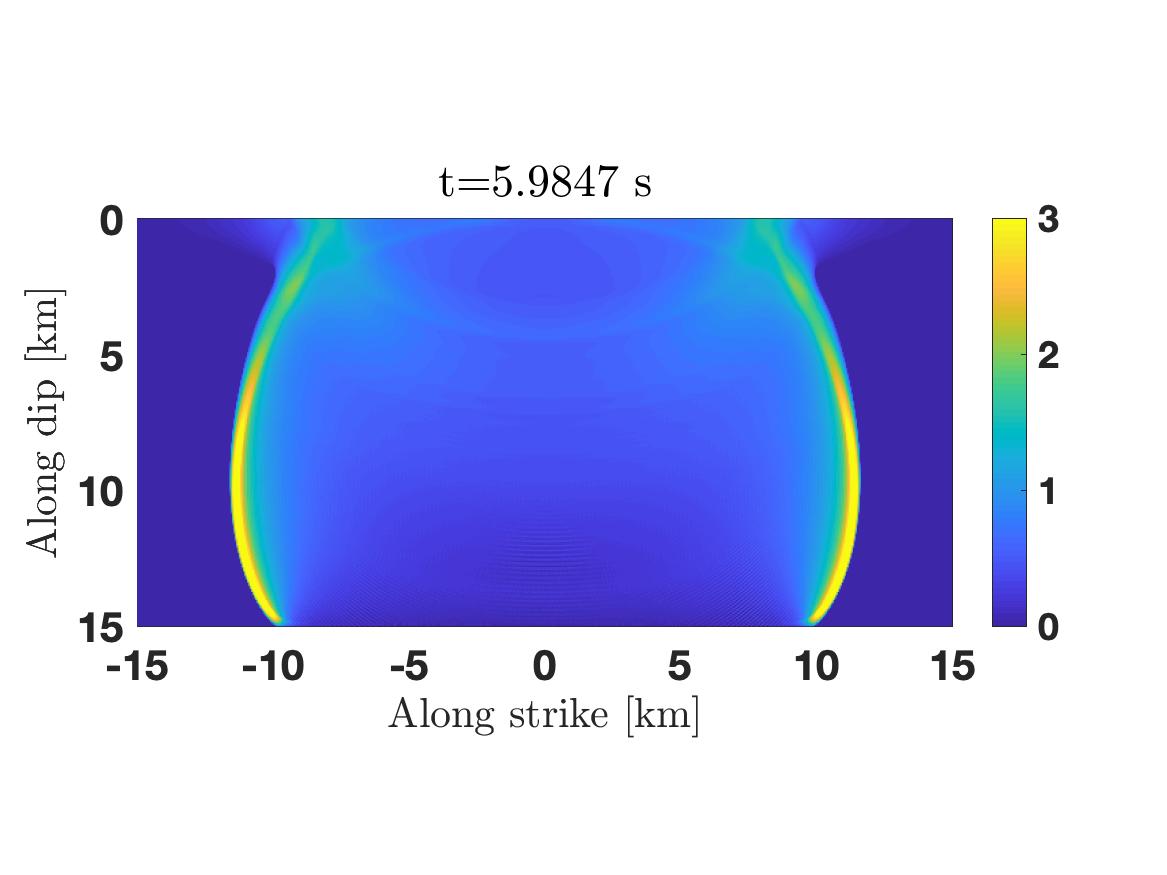}}{$6$\text{th order}}%
\hspace{-0.725cm}%
\stackunder[5pt]{\includegraphics[width=0.275\textwidth]{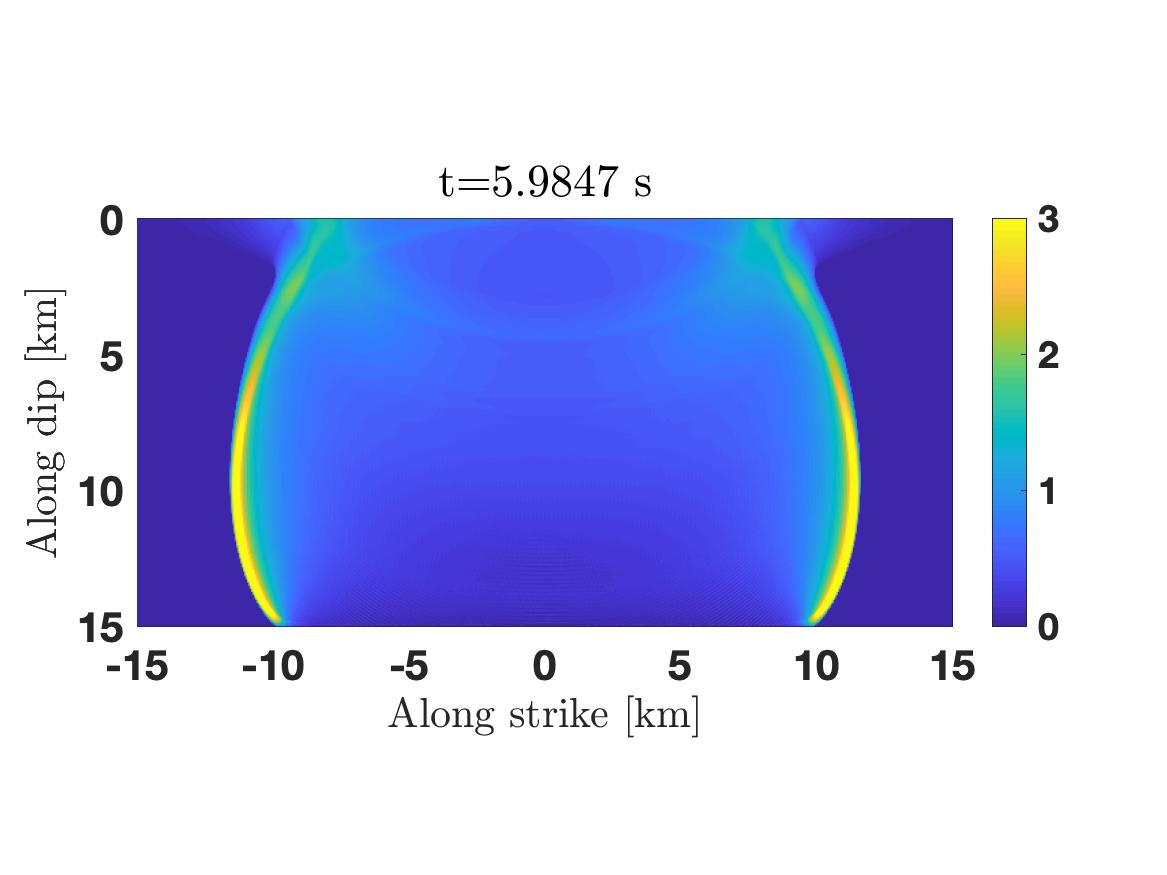}}{$7$\text{th order}}%
\renewcommand{\thefigure}{8b}
\caption{DRP SBP operators.}
}
\addtocounter{figure}{-2}
 \caption{Snapshots of the simulated slip-rate on  a dipping fault plane   at $t = 5.9847$ s using DP and DRP SBP operators of order $4, 5, 6, 7$ at h = 50 m grid spacing.  Note that odd order (5, 7) accurate DP SBP operators generate high frequency spurious oscillations which pollute the solutions everywhere.  Meanwhile all DRP operators do not support wild high frequency spurious oscillations.} 
    \label{fig:snap_shots_sliprate_comparison}   
\end{figure}

\subsection{Computational costs and scaling tests}
 Efficient HPC codes and parallel numerical implementations that scales perfectly with increasing supercomputing resources are necessary for effective numerical simulations. We have performed some scaling tests to demonstrate efficient parallel implementation of the DRP SBP operators for the numerical simulations of 3D nonlinear earthquake rupture dynamics, see Figure \ref{fig:scalingdrp}.
~\
Our new DRP operators are implemented in the large scale code with near perfect scaling.  The resources used is slightly larger than the $6$th order upwind DP and traditional  SBP schemes, but not by a large amount.  We consider the DRP operators to have a marginal higher cost than the traditional and DP SBP methods. 
    \begin{figure}[h!]
    \centering
    \makebox{\includegraphics[width = 0.95\textwidth]{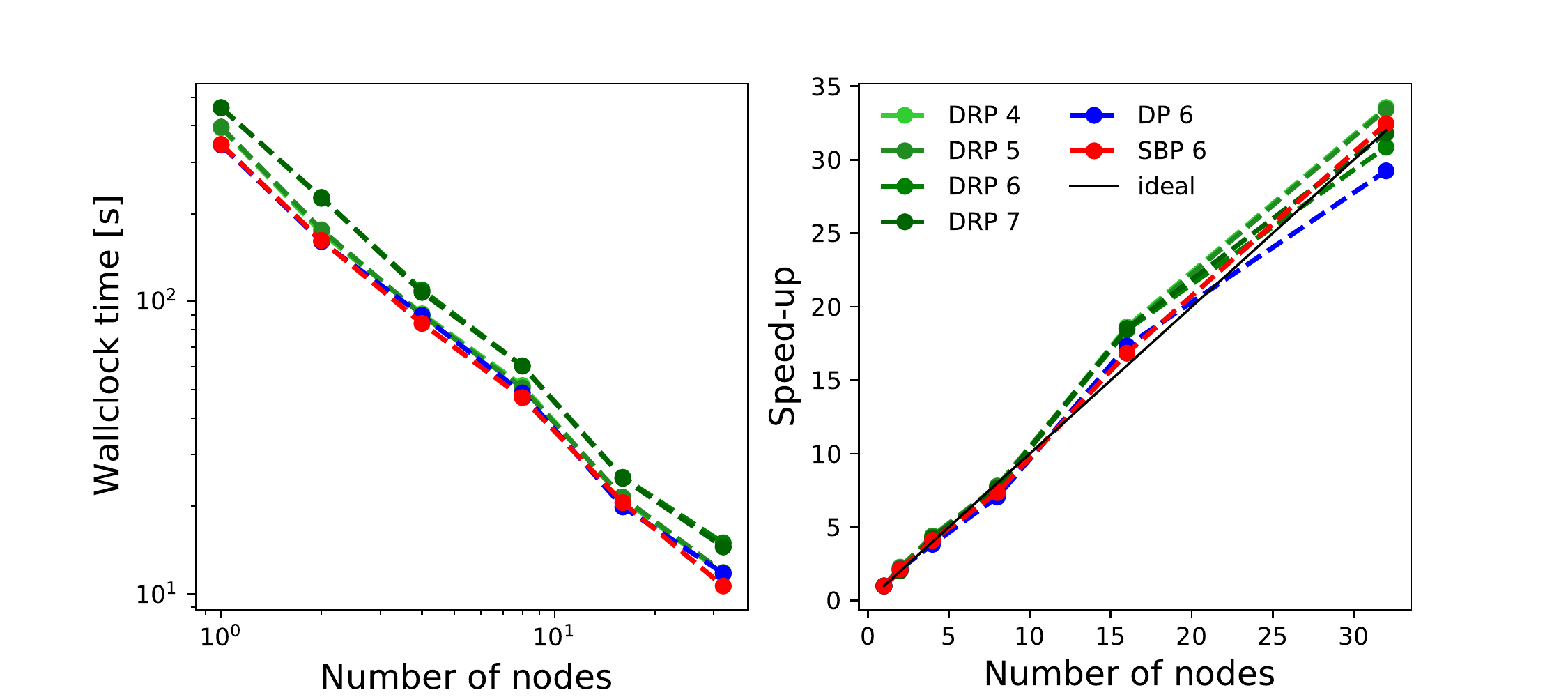}}
    \caption{Scaling plot for the dispersion relation preserving schemes in WaveQlab3D at $h = 100$ m grid spacing. The scaling test was performed for one second of the test problem TPV10.}
    \label{fig:scalingdrp}
\end{figure}

\subsection{Discussions}
Numerical simulations presented in Figure \ref{fig:slip_rate_100m_50m}--\ref{fig:shear_stress_100m_50m} compare our new DRP SBP operators with DP and the traditional SBP operator for the dynamic rupture simulation, TPV10 benchmark problem. The simulations are performed at two levels of mesh resolutions, $h = 100, 50$ m. As we can see, our DRP SBP operators have much fewer oscillations in their numerical solutions than their DP and traditional SBP counterparts. However,  the DRP operators seem to have a computational cost marginally higher than the computational costs of traditional and DP SBP methods, see Figure \ref{fig:scalingdrp}. 

Figure \ref{fig:slip_rate_100m_50m} and \ref{fig:shear_stress_100m_50m} show that, at $h = 100$ m grid spacing,  even (4 and 6) order DP SBP operators support minimal high frequency errors while the odd  (5 and 7) order DP SBP operators amplify high frequency numerical errors. These are in a good agreement with the numerical error analysis presented in Section \ref{sec:parity}. With mesh refinement $h = 50$ m, the 4th order upwind DP SBP operator, the traditional SBP FD operator, and all DRP SBP operators compute converging numerical solutions. For odd  (5 and 7) order DP SBP operators, mesh refinement   amplifies further high frequency numerical artefacts while the 6th order DP SBP operator generates a low frequency error. 

The accuracy of the numerical solutions of  DRP SBP operators at $h =100$ m resolution is comparable to the accuracy of the numerical solutions of  traditional SBP FD operator at $h =50$ m. Although mesh refinement can improve the accuracy of the traditional SBP FD operator, however, this  comes with a significant computational cost for the 3D problem.

Figure \ref{fig:snap_shots_sliprate_comparison} shows that the catastrophic high frequency numerical errors, for odd  (5 and 7) order DP SBP operators,  corrupts the solutions everywhere and significantly alters the nonlinear rupture process.

\section{Summary}
We derive and analyse high order finite difference numerical methods for the simulations of nonlinear frictional sliding along internal boundaries embedded in elastic solids. In particular,  we consider  earthquake source processes  modelled by spontaneously propagating shear ruptures along fault surfaces in 3D elastic solids.  This problem is both numerically and computationally challenging because of the unprecedented large gradients generated by nonlinear coupling of fields across the fault interface and the presence of discontinuous solutions on the fault. 

We discretise the 3D elastic wave equation with a pair of DP SBP operators, on boundary-conforming curvilinear meshes, and enforce the nonlinear interface conditions weakly using penalties.
 Using the energy method and the dual-pairing SBP framework \cite{Mattsson2017,DURU2022110966,CWilliams2021,williams2021provably}, we prove that the numerical method is energy-stable. We derive a priori error estimates and prove the convergence of the numerical error for a linearised friction law.
Further, we analyse the dependence of the parity (even/odd) of the operators on the numerical errors. For odd order accurate operators, our error analysis reveals that the DP SBP FD operators \cite{Mattsson2017} have the potential to generate spurious catastrophic high frequency errors that do diminish with mesh refinement. However, the $\alpha$-DRP SBP operators  \cite{williams2021provably,CWilliams2021} can resolve the highest frequency ($\pi$-mode) present on any equidistant grid at a tolerance of $\alpha = 5\%$ maximum error. The DRP  SBP operators \cite{williams2021provably,CWilliams2021}  potentially eliminates the catastrophic high frequency errors. 

Numerical simulations  performed in 3D corroborates  the analysis.
Our DRP  SBP operators reproduces dynamic rupture benchmark problems, proposed by Southern California Earthquake Center, with less computational effort than standard methods based on traditional SBP  operators.
    
\section*{Acknowledgments}
This research was undertaken with the assistance of resources and services from the National Computational Infrastructure (NCI), which is supported by the Australian Government's National Collaborative Research Infrastructure Strategy (NCRIS).  Frederick Fung and Christopher Williams acknowledge support from the Australian Government Research Training Program Scholarship.

\bibliographystyle{plain}
\bibliography{bibfile}{}  

\end{document}